\newtheorem{theorem}{Theorem}[section]
\newtheorem{lemma}[theorem]{Lemma}
\newtheorem{prop}[theorem]{Proposition}
\newtheorem{corollary}[theorem]{Corollary}
\newtheorem{defn}{Definition}
\newtheorem{example}{Example}
\numberwithin{equation}{section}
\newcommand{\Q}{\mathbb{Q}}
\newcommand{\N}{\mathbb{N}}
\newcommand{\Z}{\mathbb{Z}}
\renewcommand{\a}{\alpha}
\begin{document}
	\title[Newton polygon over composition]{Behaviour of Newton polygon over polynomial composition}
	\author[Jakhar]{Anuj Jakhar}
	\author[Srinivas]{Srinivas Kotyada}
	\author[Laishram]{Shanta Laishram}
	\author[Yadav]{Prabhakar Yadav}
	
	\address[Jakhar]{Department of Mathematics, Indian Institute of Technology (IIT) Madras}
	\email{anujjakhar@iitm.ac.in;} 
	
	\address[Kotyada]{Institute of Mathematical Sciences, CI of Homi Bhabha National Institute, CIT Campus, Taramani, Chennai 600 113, India}
	\email{srini@imsc.res.in; srinivas.kotyada@gmail.com} 
	
	\address[Laishram, Yadav]{Stat-Math Unit, Indian Statistical Institute \\
		7 S. J. S. Sansanwal Marg, New Delhi, 110016, India}
	\email[Laishram]{shanta@isid.ac.in;} 
	\email[Yadav]{pkyadav914@gmail.com; yadavprabhakar096@gmail.com}
	
	\dedicatory{}
	\thanks{2020 Mathematics Subject Classification: 11R09 (Primary), 11S05, 37P05 (Secondary).\\
		Keywords: Newton polygons, Composition of polynomials, Polynomial iteration, Polynomial irreducibility, eventually stability, Non-monogenity}

	\begin{abstract}
	
In this paper, we study the structure of Newton polygons for compositions of polynomials over the rationals. We establish sufficient conditions under which the successive vertices of the Newton polygon of the composition \( g(f^n(x)) \) with respect to a prime \( p \) can be explicitly described in terms of the Newton polygon of the polynomial \( g(x) \). Our results provide deeper insights into how the Newton polygon of a polynomial evolves under iteration and composition, with applications to the study of dynamical irreducibility, eventual stability, non-monogenity of tower of number fields, etc.
	\end{abstract}

	\maketitle
	\pagenumbering{arabic}
	\pagestyle{headings}

\section{Introduction and Statement of Results}

Newton polygons, named after Isaac Newton, have proven to be an indispensable tool in the study of polynomials. They have numerous applications, including analyzing the factors of a polynomial, studying the Galois group of a polynomial, computing the discriminant of a number field, examining the monogenity and integral basis of number fields, and understanding the splitting of a rational prime in a number field, etc. These applications have attracted significant attention in recent years (cf. \cite{coleman}, \cite{FMN12} \cite{filaseta bessel}, \cite{FKT12}, \cite{FT02}, \cite{GMN12}, \cite{G2}, \cite{J1}--\cite{JLS18}, \cite{KK12}, \cite{LY24}--\cite{26}). 

Newton initially introduced polygons to study complex curves of two variables, which eventually led to the development of what is now known as the Puiseux series of a curve (cf. \cite{7} for details). This method also applies to polynomials in one variable by looking their various $g$-adic expansions. A significant theory in this regard was developed by Ore \cite{10} nearly a century ago. In his 1923 Ph.D. thesis and a series of subsequent papers, Ore extended the arithmetic applications of Newton polygons \cite{30}, \cite{10}. 

Let \( f(x) \in \mathbb{Z}[x] \) be a monic irreducible polynomial, \( K \) be the number field generated by a root \( \theta \) of \( f(x) \), and \( p \) be a prime number. Under a certain \( p \)-regularity condition, Ore provided a constructive method to determine the prime ideal decomposition of \( p \) in \( K \), the \( p \)-adic valuation of the index of the subgroup \( \mathbb{Z}[\theta] \) in the ring of algebraic integers of \( K \), and the factorization of \( f \) over the field \( \mathbb{Q}_p \) of \( p \)-adic numbers. The prime ideals are parameterized in terms of combinatorial data associated with different \( \phi \)-Newton polygons, where the polynomials \( \phi(x) \) are monic lifts to \( \mathbb{Z}[x] \) of the distinct irreducible factors of \( f(x) \) modulo \( p \). 

Ore's work has gained renewed interest in recent years (e.g., see \cite{CMS2000}, \cite{25}, \cite{26}). In \cite{Fadil21}, \cite{GMN12}, \cite{G2}, Guardia, Montes, and Nart extended Ore's ideas and developed the theory of higher-order Newton polygons as a powerful tool for computing discriminants, prime ideal decomposition, and integral bases of number fields. In 2019, Kim and Miller \cite{KM19} employed this tool to compute an integral basis and find integral elements of small prime power norm, ultimately establishing an upper bound for the class number. Using further algebraic arguments, they deduced that the class number is \( 1 \). Notably, computing the class number without assuming the Generalized Riemann Hypothesis (GRH) remains a highly challenging problem. 


To begin with, we recall the notion of Newton polygon. Let \( p \) be a prime number and let \( \nu_p \) denote the \( p \)-adic valuation on \( \mathbb{Q} \). Consider a polynomial \( g(x) = \sum_{i=0}^{e} a_i x^i \in \mathbb{Q}[x] \) with \( g(0) \neq 0 \). The \textit{Newton polygon} of \( g \) with respect to \( p \), denoted by \( NP_p(g) \), is the polygonal path formed by the lower edges along the convex hull in \( \mathbb{R}^2 \) of the points \( (e-i, \nu_p(a_i)) \) for \( 0 \leq i \leq e \). 

In 1906, Dumas \cite{duma} considered Newton polygons with respect to a prime $p$ of two polynomials $f(x), g(x) \in \Q[x]$ and gave a beautiful idea to construct the Newton polygon of the polynomial obtained by the product $f(x)g(x)$. This result helped in obtaining various results on the irreducibility of the polynomials. Further, with such a result, one can easily predict the Newton polygon structure of the polynomial $f(x)^n$ (\textit{\( n \)-times product}) for any polynomials $f(x) \in \Q[x]$. Dumas proved the following fundamental result:


\begin{theorem}\cite[Dumas' Theorem]{duma} \label{dumas thm}
	Let $g(x)$ and $h(x)$ be in $\mathbb{Z}[x]$ with $g(0)h(0) \neq 0$, and let $p$ be a prime. Let $k$ be a  non-negative integer such that $p^k$ divides the leading coefficient of $g(x)h(x)$ but $p^{k+1}$ does not. Then the edges of the Newton polygon for $g(x)h(x)$ with respect to $p$ can be formed by constructing a polygonal path begining at $(0,k)$ and using translates of the edges in the Newton polygons for $g(x)$ and $h(x)$ with respect to the prime $p$, using exactly one translate for each edge of the Newton polygons for $g(x)$ and $h(x).$ Neccessarily, the translated edges are translated in such a way as to form a polygonal path with the slopes of the edges increasing.
\end{theorem}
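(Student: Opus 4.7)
My plan is to reduce the problem to linear factors over the algebraic closure $\overline{\mathbb{Q}_p}$ and then invoke the classical dictionary between slopes of a Newton polygon and $p$-adic valuations of roots. Concretely, I would fix an extension $\nu$ of $\nu_p$ to $\overline{\mathbb{Q}_p}$ and factor
\[
g(x) = a_e \prod_{i=1}^{e}(x-\alpha_i), \qquad h(x) = b_d \prod_{j=1}^{d}(x-\beta_j)
\]
in $\overline{\mathbb{Q}_p}[x]$. The hypothesis $g(0)h(0)\neq 0$ ensures that every $\alpha_i$ and every $\beta_j$ is nonzero, so every root has a finite $\nu$-valuation.

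The engine of the proof is the following \emph{slope--root lemma}. For any $f(x) = c\prod_{\ell=1}^{n}(x-\gamma_\ell) \in \overline{\mathbb{Q}_p}[x]$ with $f(0)\neq 0$, if the roots are arranged so that $\nu(\gamma_1)\le\cdots\le\nu(\gamma_n)$, then the Newton polygon $NP_p(f)$ is the lower convex hull of the points $(m,\ \nu(c)+\nu(\gamma_1)+\cdots+\nu(\gamma_m))$ for $m=0,1,\ldots,n$, with vertices occurring at $m=0$, $m=n$, and precisely those intermediate $m$ where $\nu(\gamma_m)<\nu(\gamma_{m+1})$. Granting this, the theorem is immediate: the multiset of roots of $g(x)h(x)$ is the disjoint union of those of $g$ and $h$, and the leading coefficient of $gh$ is $a_e b_d$ with $\nu_p(a_e b_d)=k$. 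Merging the two sorted lists of root-valuations and applying the lemma to $g$, $h$, and $gh$ shows that $NP_p(gh)$ is built by concatenating, in order of increasing slope and starting from $(0,k)$, translates of the edges of $NP_p(g)$ and $NP_p(h)$, using exactly one translate per edge.

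The principal obstacle is the slope--root lemma itself. Using the identity $a_{e-m} = a_e(-1)^m e_m(\alpha_1,\ldots,\alpha_e)$, where $e_m$ is the $m$-th elementary symmetric polynomial, the ultrametric inequality immediately gives
\[
\nu(a_{e-m})\ \ge\ \nu(a_e)+\nu(\alpha_1)+\cdots+\nu(\alpha_m),
\]
so every point $(m,\nu(a_{e-m}))$ lies on or above the candidate polygon. The subtler half is equality at each claimed vertex $m$: at such an index the single monomial $\alpha_1\cdots\alpha_m$ has strictly smaller valuation than every other monomial appearing in $e_m(\alpha_1,\ldots,\alpha_e)$, which rules out cancellation and forces the ultrametric bound to be sharp. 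This pins down the vertices of the lower convex hull, finishing the lemma and, with it, Dumas' theorem.
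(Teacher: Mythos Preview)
The paper does not supply its own proof of Dumas' Theorem; it is quoted as a classical result from Dumas' 1906 paper and used as a black box throughout, so there is nothing in the paper to compare your argument against.

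Your approach is nonetheless correct and is one of the standard modern proofs of the result: factor over $\overline{\mathbb{Q}_p}$, invoke the slope--root dictionary, and observe that multiplying polynomials merges the multisets of root valuations, so the Newton polygon of the product is obtained by concatenating the edges of the factors in order of increasing slope starting at $(0,\nu_p(a_e b_d))=(0,k)$. The only step you should make fully explicit is the ``unique minimal monomial'' claim in the slope--root lemma: if $\nu(\gamma_m)<\nu(\gamma_{m+1})$ and $S\neq\{1,\dots,m\}$ is any other $m$-subset, then
\[
\sum_{i\in S}\nu(\gamma_i)-\sum_{i=1}^{m}\nu(\gamma_i)
=\sum_{i\in S\setminus\{1,\dots,m\}}\nu(\gamma_i)-\sum_{i\in\{1,\dots,m\}\setminus S}\nu(\gamma_i)
\ge |S\setminus\{1,\dots,m\}|\bigl(\nu(\gamma_{m+1})-\nu(\gamma_m)\bigr)>0,
\]
which rules out cancellation in $e_m$ and forces the ultrametric bound to be sharp at each claimed vertex. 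With that detail spelled out, your slope--root lemma is complete and the theorem follows exactly as you describe.
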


It is, therefore, natural to ask the following questions: \textit{Given a polynomial \( g \), how does the Newton polygon of \( g \circ f \) change as \( f \) varies?  Can we describe families of polynomials that, when composed with \( g \), preserve key features (such as segments and slopes) of the Newton polygon?} 




In this paper, we attempt to answer the above questions. Our main result provides a clear method to construct the Newton polygon of \( g \circ f \) from the structure of the Newton polygons of \( g \) and \( f \), where both \( g \) and \( f \) can have Newton polygons with multiple segments. Specifically, this construction works under the condition that the slopes of the Newton polygon of \( g \) are not excessively steep, or the \( p \)-adic valuation of the constant term of \( g \) is bounded relative to the slope of its first edge. 
Composition with a polynomial \( f \) has the effect of ``stretching'' the Newton polygon of \( g \) horizontally by a factor of \( \deg f \). This transformation preserves the number of segments in the Newton polygon and adjusts the slopes in a predictable manner. 

We now state our first main result which is proved in Section \ref{sec3}.

\begin{theorem}\label{main thm1}
Let \( p \) be a prime number and let \( g(x) = b_e x^e + b_{e-1} x^{e-1} + \cdots + b_0 \), where \( b_0 \neq 0 \), be a polynomial of degree \( e \) with rational coefficients such that \( \nu_p(b_e)=0 \) and \( p \) divides \( b_i \) for \( 0 \leq i \leq e-1 \). Let \( m_0, m_1, \dots, m_{t-1}, m_t \) be integers such that the successive vertices of the Newton polygon of \( g(x) \) with respect to \( p \) are given by the set 
\[
\{(0,0), (e-m_1, \nu_p(b_{m_1})), \dots, (e-m_{t-1}, \nu_p(b_{m_{t-1}})), (e, \nu_p(b_0))\},
\]
with \( m_0 = e \), \( m_t = 0 \), and  \( \lambda_i = \frac{\nu_p(b_{m_i}) - \nu_p(b_{m_{i-1}})}{m_{i-1} - m_i} \) for \( 1 \leq i \leq t \). 

Assume that \( f(x) \in \mathbb{Q}[x] \) is a polynomial of degree \( d \) such that the slope \( \lambda \) of the first edge of the Newton polygon of \( f(x) \) with respect to \( p \) is greater than or equal to \( \lambda_1 \) and that the \( p \)-adic valuation of the leading coefficient of \( f(x) \) is zero. If one of the following conditions holds:
\begin{itemize}
    \item[(i)] \( \nu_p(b_0) < \lambda_1(d + e - 1), \)
    \item[(ii)] \( \nu_p(b_0) = \lambda_1(d + e - 1) \) with \( \nu_p(f(0)) > d\lambda \) or \( \lambda > \lambda_1 \),
\end{itemize}
then for any positive integer \( n \), the successive vertices of the Newton polygon of the composition \( g(f^n(x)) \) with respect to \( p \) are given by the set
\[ \{(0,0), (d^n(e - m_1), \nu_p(b_{m_1})), \dots, (d^n(e - m_{t-1}), \nu_p(b_{m_{t-1}})), (d^n e, \nu_p(b_0))\}, \]
where \( f^n(x) \) denotes the \( n \)-th iterate of the polynomial \( f(x) \).
\end{theorem}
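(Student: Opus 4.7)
The plan is to reduce the result for general $n \ge 1$ to a single uniform $p$-adic estimate on the coefficients of the iterates $f^n(x)$, and then to read off $NP_p(g(f^n(x)))$ coefficient by coefficient. The first step is to establish, by induction on $n$, the bound
\[
\nu_p\!\bigl([x^l] f^n(x)\bigr) \;\ge\; \frac{\lambda}{d^{n-1}}\,(d^n - l), \qquad 0 \le l \le d^n,
\]
so that the first slope of $NP_p(f^n(x))$ is at least $\lambda/d^{n-1}$. The inductive step expands $f^n(x) = \sum_{j=0}^{d} a_j f^{n-1}(x)^j$, bounds the slopes of $NP_p\!\bigl(f^{n-1}(x)^j\bigr)$ from below by $\lambda/d^{n-2}$ via Dumas' theorem and the inductive hypothesis, and combines this with $\nu_p(a_j) \ge \lambda(d-j)$ from $NP_p(f)$; a direct arithmetic check using $l \le d^{n-1} j$ for any nonvanishing term then yields the claim. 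The same kind of estimate shows that the strict hypothesis $\nu_p(f(0)) > d\lambda$ appearing in condition~(ii) propagates to $\nu_p(f^n(0)) > d\lambda$ for every $n \ge 1$.

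With this in hand, I write $g(f^n(x)) = \sum_{i=0}^{e} b_i f^n(x)^i$ and examine the coefficient of $x^{d^n m_k}$ for each claimed vertex index $k \in \{0, 1, \ldots, t\}$. The summand $i = m_k$ equals $b_{m_k}$ times the leading coefficient of $f^n(x)^{m_k}$, and so has $p$-adic valuation exactly $\nu_p(b_{m_k})$. For each $i > m_k$, the first-slope bound combined with Dumas' theorem applied to $f^n(x)^i$ yields
\[
\nu_p\!\bigl(b_i \,[x^{d^n m_k}] f^n(x)^i\bigr) \;\ge\; \nu_p(b_i) + d\lambda\,(i - m_k).
\]
Using convexity of $NP_p(g)$ to estimate $\nu_p(b_{m_k}) - \nu_p(b_i)$, the strict domination $\nu_p(b_i) + d\lambda(i - m_k) > \nu_p(b_{m_k})$ reduces in every case to the slope inequality $d\lambda > \lambda_k$ for intermediate $k < t$ and to $d\lambda > \lambda_t$ at the constant-term vertex $k = t$. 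A symmetric bookkeeping handles the non-vertex coefficients $[x^j] g(f^n(x))$ by comparing each summand against the height of the proposed piecewise-linear path at codegree $d^n e - j$.

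The role of conditions~(i) and~(ii) is precisely to secure these strict slope inequalities. Writing $\nu_p(b_0) = \lambda_1 e + \sum_{j \ge 2}(\lambda_j - \lambda_1)(m_{j-1}-m_j)$ and bounding the sum from below by $(\lambda_t - \lambda_1)\, m_{t-1}$, together with $m_{t-1} \ge 1$, condition~(i) forces $\lambda_t < \lambda_1 d \le d\lambda$ strictly, so every vertex is handled uniformly. Under~(ii) with $\lambda > \lambda_1$, strictness follows from $d\lambda > d\lambda_1 \ge \lambda_t$. The genuinely delicate case is (ii) combined with $\nu_p(f(0)) > d\lambda$, which permits $\lambda_t = d\lambda$ at the vertex $k = t$; here the strict bound $\nu_p(f^n(0)) > d\lambda$ from the first step supplies the missing strictness, because the contribution of the $i \ge 1$ terms to $g(f^n(0))$ involves $\nu_p(f^n(0))$ directly rather than the generic first-slope estimate. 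I expect this boundary case of condition~(ii) to be the main obstacle of the proof.
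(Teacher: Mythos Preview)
Your approach is correct and genuinely different from the paper's. The paper runs the induction on $n$ with the inductive hypothesis being the \emph{full} Newton polygon of $g\circ f^n$: assuming $NP_p(g\circ f^n)$ has the stated vertices, it expands $(g\circ f^n)\circ f$ and, through four technical lemmas (a convexity bound like your $r_t \ge \lambda_1 e + (\lambda_t-\lambda_1)m_{t-1}$, plus careful casework on coefficients indexed by both the segment of $NP_p(g\circ f^n)$ and the partition type coming from $f^j$), verifies the vertices and the above--polygon inequalities for $g\circ f^{n+1}$. You instead decouple the two ingredients: first isolate the single statement $\nu_p([x^l]f^n)\ge \tfrac{\lambda}{d^{n-1}}(d^n-l)$ by induction on $n$, and only then compose once with $g$ via $g(f^n)=\sum_i b_i (f^n)^i$, reducing everything to the scalar inequality $d\lambda\ge \lambda_t$ (strict under (i) or under (ii) with $\lambda>\lambda_1$, and saved at the constant term by the propagated $\nu_p(f^n(0))>d\lambda$ otherwise). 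Your route is cleaner and more conceptual---the iteration of $f$ and the composition with $g$ are handled in separate, reusable steps---whereas the paper's route, by carrying the whole polygon of $g\circ f^n$ through the induction, avoids a standalone lemma about $f^n$ at the cost of heavier case analysis. Both proofs ultimately hinge on the same arithmetic pivot, your identity $\nu_p(b_0)=\lambda_1 e+\sum_{j\ge 2}(\lambda_j-\lambda_1)(m_{j-1}-m_j)\ge \lambda_1 e+(\lambda_t-\lambda_1)m_{t-1}$, which is exactly the content of the paper's Lemma~2.1 with $\alpha=d+e-1$.
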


For a polynomial $f(x) \in \Q[x]$, the following result predicts the Newton polygon structure of $f^n$ based on the Newton polygon structure of $f$.

\begin{theorem}
	Let $p$ be a prime and $f(x) = a_d x^d + \cdots + a_1x + a_0 \in \Q[x]$ be a polynomial of degree $d$, with $a_0 \neq 0$ and $\nu_p(a_{m_1})>0$. Suppose that the successive vertices of $NP_p(f)$ are given by the set $\{(0,0), (d-m_1, v_p(a_{m_1})), \dots, (d-m_{t-1}, v_p(a_{m_{t-1}})), (d, v_p(a_0))\}$. If $\nu_p(a_0) \leq \frac{v_p(a_{m_1})}{d-m_1} (2d-1)$, then for $n \geq 1$, the successive vertices of $NP_p(f^n)$ are given by the set $\{(0,0), (d^{n-1}(d-m_1), v_p(a_{m_1})), \dots, (d^{n-1}(d-m_{t-1}), v_p(a_{m_{t-1}})), (d^{n}, v_p(a_0))\}$. In particular, $f(x)$ is eventually stable.
\end{theorem}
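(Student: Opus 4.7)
The plan is to reduce the statement to a single application of Theorem \ref{main thm1}, taking the present polynomial $f$ to play both the outer role $g$ and the inner iterated role in that theorem. Applying Theorem \ref{main thm1} with $n - 1$ substituted for its positive integer parameter (valid for $n \ge 2$), the composition $g(f^{n-1}(x))$ becomes $f(f^{n-1}(x)) = f^n(x)$; after identifying $e = d$ and $b_i = a_i$, the vertex set produced by Theorem \ref{main thm1} is exactly the claimed one for $NP_p(f^n)$. No induction is required; the case $n = 1$ is the given Newton polygon of $f$.

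I would first verify the structural hypotheses of Theorem \ref{main thm1} for this choice. Since $(0,0)$ is a vertex of $NP_p(f)$ we have $\nu_p(a_d) = 0$, and since the first edge has strictly positive slope $\lambda_1$, every remaining point $(d - i, \nu_p(a_i))$ with $0 \le i < d$ lies strictly above the horizontal axis, forcing $p \mid a_i$. For the inner polynomial, the slope condition $\lambda \ge \lambda_1$ holds with equality $\lambda = \lambda_1$, and its leading coefficient has $\nu_p = 0$ by the same vertex argument. The quantitative hypothesis $\nu_p(a_0) \le \lambda_1(2d - 1)$ is condition (i) of Theorem \ref{main thm1} when strict; in the boundary case of equality one falls under condition (ii), and the remaining requirement $\nu_p(f(0)) > d\lambda$ holds because $\lambda_1(2d - 1) > d\lambda_1$ for $d \ge 2$ (the case $d = 1$ being trivial).

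For eventual stability I would use the just-computed vertices to bound the number of irreducible $\Q_p$-factors of $f^n$ uniformly in $n$. The $i$-th edge of $NP_p(f^n)$ has slope $\lambda_i/d^{n-1}$ and horizontal length $d^{n-1}(m_{i-1} - m_i)$; writing $\lambda_i = u_i/v_i$ in lowest terms, the denominator of $\lambda_i/d^{n-1}$ in lowest terms equals $v_i d^{n-1}/\gcd(u_i, d^{n-1})$, which must divide the degree of any irreducible $\Q_p$-factor whose roots lie on this edge. The number of such factors is therefore at most $(m_{i-1} - m_i)\gcd(u_i, d^{n-1})/v_i \le (m_{i-1} - m_i)|u_i|/v_i$, a bound independent of $n$; summing over the $t$ edges bounds the total number of $\Q_p$-factors (and hence $\Q$-factors) of $f^n$ uniformly. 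Since this count is non-decreasing under iteration (each Galois orbit of roots of $f^{n+1}$ surjects onto a Galois orbit of roots of $f^n$ via $\alpha \mapsto f(\alpha)$), it must eventually stabilize. The main obstacle I expect is precisely this last divisibility bookkeeping, namely isolating the cancellation between $u_i$ and $d^{n-1}$ when passing to lowest terms; once that is captured by the bound $\gcd(u_i, d^{n-1}) \le |u_i|$, the verification of Theorem \ref{main thm1}'s hypotheses and the handling of the boundary case of condition (ii) are essentially formal.
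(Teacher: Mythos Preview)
Your reduction to Theorem~\ref{main thm1} with $g=f$ is exactly the paper's intended argument (the paper says as much just before Theorem~\ref{p5.2} and spells it out in Section~\ref{sec6.3}), and your per-edge Dumas count sums to the same uniform bound $r_t=\nu_p(a_0)$ on the number of irreducible factors that the paper invokes for eventual stability. One small caveat: the case $d=1$ is not ``trivial'' but is in fact a genuine exception to the statement as written (e.g.\ $f(x)=x+p$ satisfies all hypotheses yet $\nu_p(f^2(0))=2\neq 1=\nu_p(a_0)$), so the theorem should be read with $d\ge 2$, which is precisely where your verification of condition~(ii) via $\lambda_1(2d-1)>d\lambda_1$ goes through.
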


\noindent \textbf{Remark.} We would like to point out that the condition \( \nu_p(b_e) = 0 \) is redundant. Suppose \( \nu_p(b_e) \neq 0 \), and replace assumption (i) of Theorem \ref{main thm1} with the condition \( \nu_p(b_0) - \nu_p(b_e) < \lambda_1 (d + e - 1) \). Define the polynomial \( g_1(x) = p^{-\nu_p(b_e)} g(x) \). It is straightforward to verify that \( \nu_p(g_1(0)) = \nu_p(b_0) - \nu_p(b_e) < \lambda_1 (d + e - 1) \), which satisfies the conditions of Theorem \ref{main thm1}. We can then apply Theorem \ref{main thm1} to \( g_1(x) \) and determine the Newton polygon structure of the composition \( (g_1 \circ f^n)(x) \). This, in turn, provides the Newton polygon of \( g(f^n(x)) = p^{\nu_p(b_e)} g_1(f^n(x)) \) for all \( n \geq 0 \).\\

A polynomial \( f(x) = a_d x^d + a_{d-1} x^{d-1} + \cdots + a_0 \in \Q[x] \) is said to be \( p^r\textit{-pure} \) for a prime \( p \) and an integer \( r \geq 1 \) if \( \nu_p(a_d) = 0 \), \( \nu_p(a_0) = r \), and \( \frac{\nu_p(a_i)}{d-i} \geq \frac{r}{d} \) for all \( 0 < i < d \). \\

Note that in Theorem \ref{main thm1}, if $g(x)$ has a single edge, condition (i) is always satisfied. Consequently, the main result of \cite{darwish sadek} can be derived as a corollary of Theorem \ref{main thm1} combined with Dumas' Theorem.


\begin{corollary} \label{cor1.2}
    Suppose \( f(x) \in \Q[x] \) is a \( p^r \)-pure polynomial of degree \( d \). Then for any \( n \geq 1 \), the iterate \( f^n(x) \) has at most \( \gcd(d^n, r) \) irreducible factors over \( \Q \), and each irreducible factor has degree at least \( \frac{d^n}{\gcd(d^n, r)} \).
\end{corollary}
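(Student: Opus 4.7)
The plan is to use Theorem \ref{main thm1} with $g = f$ to pin down the Newton polygon of every iterate $f^n$, and then to extract the factorization bounds from Dumas' theorem together with an integrality constraint on slopes.

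First I would check that a $p^r$-pure polynomial $f$ of degree $d$ is an admissible outer polynomial in Theorem \ref{main thm1}: its Newton polygon is the single edge from $(0,0)$ to $(d,r)$ of slope $r/d$, so in the notation of that theorem, $e = d$, $t = 1$, $\lambda_1 = r/d$, and $\nu_p(b_0) = r$. The inner polynomial, also $f$, has leading coefficient a $p$-adic unit and first slope $\lambda = r/d = \lambda_1$. Condition (i) then reduces to $r < (r/d)(2d-1)$, equivalently $d > 1$ (the case $d = 1$ is trivial, as $f^n$ is then linear). Applying Theorem \ref{main thm1} with exponent $n-1$ (and taking $n = 1$ as a tautological base case), I would conclude that $NP_p(f^n)$ is the single edge from $(0,0)$ to $(d^n, r)$ of slope $r/d^n$.

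Next I would factor $f^n = h_1 \cdots h_s$ over $\Q$ into irreducibles of degrees $k_i$, rescaling each $h_i$ by a rational constant to lie in $\Z_p[x]$ if needed (slopes are invariant under such scaling). By Dumas' theorem applied to this factorization, each edge of each $NP_p(h_i)$ is a translate of an edge of $NP_p(f^n)$, and since the latter has the unique slope $r/d^n$, each $NP_p(h_i)$ is itself a single edge of slope $r/d^n$. Reading off the total vertical rise gives
\[
\frac{k_i r}{d^n} \;=\; \nu_p(h_i(0)) - \nu_p(\mathrm{lead}(h_i)) \;\in\; \Z,
\]
which forces $d^n \mid k_i r$ and hence $k_i \geq d^n/\gcd(d^n, r)$. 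Summing $\sum k_i = d^n$ then yields $s \leq \gcd(d^n, r)$.

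The main obstacle is the first step: matching the $p^r$-purity of $f$ to the combinatorial hypotheses of Theorem \ref{main thm1} and verifying condition (i) in the degenerate situation $e = d$. Once $NP_p(f^n)$ is known to be a single edge, the Dumas-plus-integrality argument is routine.
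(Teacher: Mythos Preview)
Your proposal is correct and follows essentially the same route the paper indicates: the paper remarks that when $g$ has a single edge, condition (i) of Theorem \ref{main thm1} is automatically satisfied, so with $g=f$ one obtains that $NP_p(f^n)$ is the single segment from $(0,0)$ to $(d^n,r)$, and the factorization bounds then come from Dumas' theorem. Your write-up spells this out carefully, including the $d=1$ edge case and the integrality step $d^n \mid k_i r \Rightarrow k_i \ge d^n/\gcd(d^n,r)$, which the paper leaves implicit.
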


A \( p^r \)-pure polynomial \( f \) of degree \( d \) is said to be \textit{\( p^r \)-Dumas} if \( \gcd(r, d) = 1 \). The following corollary regarding \( p^r \)-Dumas polynomials is an immediate consequence of Corollary \ref{cor1.2}.

\begin{corollary} \label{cor6.2}
    Suppose \( f(x) \in \Q[x] \) is a \( p^r \)-Dumas polynomial of degree \( d \). Then \( f^n(x) \) is irreducible over \( \Q \) for all \( n \geq 1 \).
\end{corollary}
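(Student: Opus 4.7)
The plan is to deduce the statement as an immediate consequence of Corollary \ref{cor1.2}. Since $f(x)$ is $p^r$-Dumas of degree $d$, by definition it is $p^r$-pure of degree $d$ with the extra hypothesis $\gcd(r,d)=1$. Consequently Corollary \ref{cor1.2} applies, and for every $n\geq 1$ the iterate $f^n(x)$ has at most $\gcd(d^n,r)$ irreducible factors over $\Q$, each of degree at least $d^n/\gcd(d^n,r)$.

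The only remaining ingredient is the elementary number-theoretic observation that $\gcd(d,r)=1$ forces $\gcd(d^n,r)=1$ for every $n\geq 1$: any prime $q$ dividing $d^n$ must divide $d$, and therefore cannot divide $r$. First I would record this observation, then plug it into Corollary \ref{cor1.2} to conclude that $f^n(x)$ has at most one irreducible factor over $\Q$, necessarily of degree at least $d^n=\deg f^n$. That unique factor must then coincide with $f^n(x)$ up to a unit, so $f^n(x)$ is irreducible over $\Q$.

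Since the entire argument is a direct packaging of Corollary \ref{cor1.2} together with the multiplicativity of coprimality under taking powers, there is no genuine technical obstacle here — all the substantive work (Dumas' theorem, the stretching description of $NP_p(g\circ f^n)$ given by Theorem \ref{main thm1}, and the factor-counting in Corollary \ref{cor1.2}) has already been carried out upstream, and the Dumas condition $\gcd(r,d)=1$ is precisely what is needed to collapse the bound $\gcd(d^n,r)$ down to $1$.
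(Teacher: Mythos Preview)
Your proposal is correct and matches the paper's approach: the paper simply states that the corollary is an immediate consequence of Corollary~\ref{cor1.2}, and your argument spells out exactly that deduction together with the observation $\gcd(d,r)=1\Rightarrow\gcd(d^n,r)=1$.
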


Note that the application of Theorem \ref{main thm1} relies on the condition that all edges of the Newton polygon of \( g \) have positive slopes with respect to a prime \( p \). Our next result extends this stretching property of the Newton polygon to cases where the edges can have negative slopes, provided certain additional restrictions are imposed on the polynomial \( f \).


\begin{theorem}\label{main thm2}
Let \( p \) be a prime number, and let \( g(x) = b_e x^e + b_{e-1} x^{e-1} + \cdots + b_0 \) with \( b_0 \neq 0 \) be a polynomial of degree \( e \) with rational coefficients. Let \( m_0, m_1, \dots, m_{t-1}, m_t \) be integers such that the successive vertices of the Newton polygon of \( g(x) \) with respect to \( p \) are given by the set 
\[
\{(0, \nu_p(b_e)), (e - m_1, \nu_p(b_{m_1})), \dots, (e - m_{t-1}, \nu_p(b_{m_{t-1}})), (e, \nu_p(b_0))\},
\]
with \( m_0 = e \), \( m_t = 0 \), and \( \lambda_i = \frac{\nu_p(b_{m_i}) - \nu_p(b_{m_{i-1}})}{m_{i-1} - m_i} \) for \( 1 \leq i \leq t \). 

Let \( u \) be an integer such that \( u > |\lambda_j| \) for all \( j \in \{1,2, \ldots, t\} \). Suppose \( f(x) = a_d x^d + \cdots + a_1 x + a_0 \) is a polynomial of degree \( d \) such that 
\( \nu_p(a_d)=0 \) and $\nu_p(a_i) \geq \frac{u}{\beta} (d-i)$ for all $i \in \{ 0,1,2, \ldots, d \}$, 
where $\beta \leq d$ is a positive integer coprime to $u$. Then, the successive vertices of the Newton polygon of the composition \( g(f(x)) \) with respect to \( p \) are given by the set
\[
\{(0, \nu_p(b_e)), (d(e - m_1), \nu_p(b_{m_1})), \dots, (d(e - m_{t-1}), \nu_p(b_{m_{t-1}})), (de, \nu_p(b_0))\}.
\]
\end{theorem}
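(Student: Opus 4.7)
The plan is to expand $g(f(x)) = \sum_{k=0}^{e} b_k\, f(x)^k$ and control the $p$-adic valuation of each coefficient $c_j$ of $x^j$ in the composition. The starting observation is the multinomial bound
$\nu_p\bigl([f(x)^k]_j\bigr) \geq \tfrac{u(kd-j)}{\beta}$
for $0\le j\le kd$, where $[f(x)^k]_j$ denotes the coefficient of $x^j$ in $f(x)^k$: expanding $f^k$, each monomial term $a_{i_1}\cdots a_{i_k}$ with $i_1+\cdots+i_k=j$ has valuation at least $\tfrac{u}{\beta}\sum_\ell(d-i_\ell) = \tfrac{u(kd-j)}{\beta}$ by the hypothesis $\nu_p(a_i)\geq \tfrac{u(d-i)}{\beta}$. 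The single numerical fact that drives everything is the strict inequality $\tfrac{ud}{\beta} > \lambda_i$ for every slope $\lambda_i$ of the Newton polygon of $g$, which is immediate from $u > |\lambda_i|$ together with $d\geq\beta$. (The coprimality of $u$ and $\beta$ plays no role in the statement itself; it will presumably enter in downstream irreducibility corollaries.)

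Next I would verify equality at each proposed vertex, i.e., $\nu_p(c_{dm_i}) = \nu_p(b_{m_i})$ for $0\leq i\leq t$. The leading contribution $b_{m_i}[f^{m_i}]_{dm_i} = b_{m_i}a_d^{m_i}$ has valuation exactly $\nu_p(b_{m_i})$ because $\nu_p(a_d)=0$, while for each $k > m_i$ the corresponding term in $c_{dm_i}$ has valuation at least $\nu_p(b_k)+\tfrac{ud(k-m_i)}{\beta}$. The required strict dominance reduces to $\tfrac{\nu_p(b_{m_i})-\nu_p(b_k)}{k-m_i} < \tfrac{ud}{\beta}$; the left-hand side is at most the chord slope from $(e-k, \nu_p(b_k))$ to $(e-m_i, \nu_p(b_{m_i}))$, which by convexity of the Newton polygon of $g$ is bounded above by $\lambda_t < \tfrac{ud}{\beta}$.

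The main step, where the book-keeping becomes delicate, is to show $\nu_p(c_j) \geq P(j)$ for every $j$, where $P$ denotes the proposed polygon; for $j \in [dm_i, dm_{i-1}]$ this reads $P(j) = \nu_p(b_{m_{i-1}}) + \tfrac{\lambda_i(dm_{i-1}-j)}{d}$. Along the $i$-th edge of the Newton polygon of $g$ the bound $\nu_p(b_k)\geq \nu_p(b_{m_{i-1}})+\lambda_i(m_{i-1}-k)$ applies for $m_i\leq k\leq m_{i-1}$, and the auxiliary quantity $\nu_p(b_k)+\tfrac{u(kd-j)}{\beta}$ is non-decreasing in $k$ (its $k$-coefficient is $\tfrac{ud}{\beta}-\lambda_i > 0$), so its minimum over that edge occurs at $k=m_i$. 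Minimizing over all edges then reduces to a finite minimum over the vertex indices $m_0,\ldots,m_{i-1}$, and a direct comparison at $k=m_{i-1}$ yields $\nu_p(b_{m_{i-1}}) + \tfrac{u(m_{i-1}d-j)}{\beta} - P(j) = (dm_{i-1}-j)\bigl(\tfrac{u}{\beta}-\tfrac{\lambda_i}{d}\bigr)\geq 0$, with analogous estimates at vertices further to the left. This establishes $\nu_p(c_j)\geq P(j)$ everywhere, and combined with the previous step identifies $P$ as the Newton polygon of $g(f(x))$. The anticipated obstacle is purely bookkeeping—tracking indexing consistently across edges and the valid ranges of $k$—since all of the analytic content funnels through the single inequality $\tfrac{ud}{\beta} > \lambda_i$.
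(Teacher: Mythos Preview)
Your proposal is correct and follows the same strategy as the paper's proof in Section~4: expand $g(f(x))$ term by term, bound each contribution via the multinomial estimate $\nu_p\bigl([f^k]_j\bigr)\ge\frac{u(kd-j)}{\beta}$, and reduce everything to the single inequality $\frac{ud}{\beta}>\lambda_s$ (the paper's equations (4.1) and (4.2) are exactly your vertex-equality and lower-bound claims, carried out with heavier case analysis via Lemma~2.2). One bookkeeping point worth tightening: on the $i$-th edge the valid range is $k\ge j/d$, not $k\ge m_i$, so the minimum of your auxiliary lower bound sits at $k=j/d$ where it equals $P(j)$ exactly; your explicit check at $k=m_{i-1}$ alone leaves $k\in[j/d,m_{i-1})$ unaddressed, but the monotonicity you already established closes this immediately.
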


\noindent The proof of above theorem is given in Section \ref{sec4}. The following corollary follows as a direct consequence of Theorem \ref{main thm2}.

\begin{corollary}
	Under the notations and assumptions of Theorem \ref{main thm2}, the successive vertices of the Newton polygon of \( (g \circ f^n)(x) \) with respect to \( p \) are given by the set
	\[ 
	\{(0, \nu_p(b_e)), (d^n(e - m_1), \nu_p(b_{m_1})), \dots, (d^n(e - m_{t-1}), \nu_p(b_{m_{t-1}})), (d^ne, \nu_p(b_0))\}. 
	\]
\end{corollary}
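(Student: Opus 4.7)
The plan is to proceed by induction on $n$. The base case $n=1$ is exactly the content of Theorem \ref{main thm2}. For the inductive step, assume the statement for some $n \geq 1$, set $G(x) := g(f^n(x))$, and observe that $g(f^{n+1}(x)) = G(f(x))$; the strategy is then to re-apply Theorem \ref{main thm2} to the pair $(G, f)$ in place of $(g, f)$.

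The substantive work lies in verifying that $G$ satisfies the hypotheses of Theorem \ref{main thm2}. By the inductive hypothesis, the successive vertices of $NP_p(G)$ are
\[
\{(0, \nu_p(b_e)), (d^n(e - m_1), \nu_p(b_{m_1})), \dots, (d^n(e - m_{t-1}), \nu_p(b_{m_{t-1}})), (d^n e, \nu_p(b_0))\},
\]
so the slopes of $NP_p(G)$ are $\lambda_i / d^n$, and they satisfy $|\lambda_i / d^n| \leq |\lambda_i| < u$. Thus the same integer $u$ (paired with the same $\beta$ chosen for $f$) remains a valid witness for the hypothesis $u > |\text{slopes of } G|$. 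I also want to note that the leading coefficient of $G$ equals $b_e$ times a power of the leading coefficient of $f$, whose $p$-adic valuation is $0$ because $\nu_p(a_d) = 0$; hence $\nu_p(\text{lc}(G)) = \nu_p(b_e)$, consistent with $(0, \nu_p(b_e))$ being the starting vertex.

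Now applying Theorem \ref{main thm2} to the pair $(G, f)$ yields the Newton polygon of $G(f(x))$. The theorem asserts that each $x$-coordinate $d^n(e - m_i)$ of a vertex of $NP_p(G)$ is stretched by the factor $d$, producing vertices at $(d \cdot d^n(e - m_i), \nu_p(b_{m_i})) = (d^{n+1}(e - m_i), \nu_p(b_{m_i}))$, while the second coordinates are unchanged. This is precisely the desired description of $NP_p(g \circ f^{n+1})$, closing the induction.

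The main (indeed only) obstacle is the bookkeeping that the hypothesis $u > |\lambda_j|$ is self-strengthening under iteration: each composition with $f$ divides all slopes by $d$, so the required upper bound on slopes becomes strictly easier to satisfy at each step, and the constants $u$ and $\beta$ originally chosen to accommodate $f$ carry through all iterates. Once this observation is in hand, the inductive step is a direct invocation of Theorem \ref{main thm2}.
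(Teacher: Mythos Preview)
Your proposal is correct and follows exactly the approach the paper has in mind: the corollary is stated as a ``direct consequence'' of Theorem~\ref{main thm2}, and the direct consequence is precisely the induction you describe, where at each step the slopes of $NP_p(g\circ f^n)$ shrink by a factor of $d$ so that the bound $u>|\lambda_j/d^n|$ is automatically inherited and Theorem~\ref{main thm2} applies again to the pair $(g\circ f^n,\,f)$. Your verification that the leading-coefficient and nonvanishing-constant-term conditions persist for $G=g\circ f^n$ is the only bookkeeping needed, and it is handled correctly.
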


In 2024, Gajek-Leonard and Tomer \cite{gajek tomer} proved an interesting result regarding the Newton polygon of composition of polynomials, which can be deduced as an easy corollary of Theorem \ref{main thm2}.

\begin{corollary}\cite[Theorem 1.1]{gajek tomer}
	Let \( f(x), g(x) \in \Q[x] \) be polynomials. Suppose the Newton polygon of \( g(x) \) with respect to \( p \) consists of \( t \) segments with slopes \( \lambda_1 < \cdots < \lambda_t \). If \( f(x) \) is \( p^r \)-pure and \( |\lambda_i| < r \) for all \( 1 \leq i \leq t \), then the Newton polygon of \( (g \circ f)(x) \) has \( t \) segments with slopes \( \frac{\lambda_1}{\deg f} < \cdots < \frac{\lambda_t}{\deg f} \).
\end{corollary}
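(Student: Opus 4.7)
The plan is to deduce the statement directly from Theorem \ref{main thm2} by casting the hypothesis ``$f$ is $p^r$-pure'' into the quantitative form that theorem requires. Unpacking the definition, $p^r$-pureness gives $\nu_p(a_d)=0$, $\nu_p(a_0)=r$, and $\nu_p(a_i)\geq r(d-i)/d$ for every $0<i<d$, so the Newton polygon of $f$ is the single segment from $(0,0)$ to $(d,r)$ of slope $r/d$.

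Next I would produce the integers $u,\beta$ demanded by Theorem \ref{main thm2}. Writing $r/d$ in lowest terms, set $u=r/\gcd(r,d)$ and $\beta=d/\gcd(r,d)$; these are coprime positive integers with $\beta\leq d$, and the required inequality $\nu_p(a_i)\geq(u/\beta)(d-i)$ collapses to $\nu_p(a_i)\geq r(d-i)/d$, which is precisely the $p^r$-pure bound. For the remaining hypothesis $u>|\lambda_j|$, the case $\gcd(r,d)=1$ follows immediately from $|\lambda_j|<r$; if $\gcd(r,d)>1$, one instead picks any integer $u$ with $\max_j|\lambda_j|<u\leq r$ together with an integer $\beta\in[ud/r,\,d]$ coprime to $u$ (the admissible interval has positive width $d(r-u)/r$ whenever $u<r$).

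Applying Theorem \ref{main thm2} with this $(u,\beta)$ then yields that the successive vertices of $NP_p(g\circ f)$ are
\[
(0,\nu_p(b_e)),\ (d(e-m_1),\nu_p(b_{m_1})),\ \ldots,\ (d(e-m_{t-1}),\nu_p(b_{m_{t-1}})),\ (de,\nu_p(b_0)).
\]
Computing the slope on the $i$-th segment, one obtains
\[
\frac{\nu_p(b_{m_i})-\nu_p(b_{m_{i-1}})}{d(m_{i-1}-m_i)}=\frac{\lambda_i}{d},
\]
and since scaling by $1/d$ preserves strict monotonicity, $NP_p(g\circ f)$ consists of exactly $t$ segments of slopes $\lambda_1/d<\cdots<\lambda_t/d$, as asserted.

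The main obstacle I anticipate is the bookkeeping in verifying $u>|\lambda_j|$ when $\gcd(r,d)>1$, since the natural lowest-terms choice $u=r/\gcd(r,d)$ may fall below $\max_j|\lambda_j|$ even though $|\lambda_j|<r$ is given; the alternative $(u,\beta)$ construction above sidesteps this, and once it is in place the remainder of the argument is a direct invocation of Theorem \ref{main thm2}.
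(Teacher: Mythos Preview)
Your approach---recasting $p^r$-purity as the quantitative bound in Theorem~\ref{main thm2} and reading off the vertices of $NP_p(g\circ f)$---is precisely what the paper intends; it simply asserts the corollary follows from Theorem~\ref{main thm2} without spelling out the choice of $(u,\beta)$.

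There is, however, a genuine gap in your alternative $(u,\beta)$ construction for the case $\gcd(r,d)>1$. You need an integer $u$ with $\max_j|\lambda_j|<u\leq r$ and then an integer $\beta\in[ud/r,d]$ coprime to $u$, but neither the existence of an integer $u<r$ in that range nor of a coprime $\beta$ in the resulting interval is guaranteed by the hypothesis $|\lambda_j|<r$. Concretely, take $r=4$, $d=2$, and suppose some slope satisfies $|\lambda_j|=7/2$. The only admissible integer $u$ is $u=4$, which forces $\beta\in[ud/r,d]=\{2\}$, and $\gcd(4,2)=2$. Exhausting $\beta\in\{1,2\}$ directly, one checks that no pair $(u,\beta)$ of positive integers with $u/\beta\leq r/d=2$, $\beta\leq 2$, $\gcd(u,\beta)=1$, and $u>7/2$ exists at all. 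So the coprimality hypothesis of Theorem~\ref{main thm2} as stated cannot always be verified from $p^r$-purity and $|\lambda_j|<r$ alone.

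The clean fix is to observe that the coprimality of $u$ and $\beta$ is nowhere used in the proof of Theorem~\ref{main thm2}: the arguments in Section~\ref{sec4} invoke only $\nu_p(a_i)\geq(u/\beta)(d-i)$, the strict inequality $u>|\lambda_j|$, and $\beta\leq d$ (the last entering solely through the contradiction $d/\beta<1$). Hence you may simply set $u=r$ and $\beta=d$, which $p^r$-purity and $|\lambda_j|<r$ supply immediately, and appeal to the proof of Theorem~\ref{main thm2}. With that adjustment the remainder of your argument is correct.
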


In 1985, Odoni \cite{Odoni85} studied a recurrence sequence defined by relations of the type \( a_n = f(a_{n-1}) \), where \( a_0 \in \mathbb{Z} \) and \( f(x) \in \mathbb{Z}[x] \) is a polynomial of degree at least two. Such a sequence can be described as the orbit of \( a_0 \) under the dynamical system generated by iterations of \( f(x) \). A natural question is whether the sequence \( (a_n)_{n \geq 0} \) contains infinitely many primes, and this remains an open problem. The sequence \( (a_n) \) grows extremely rapidly, approximately to the order of \( d^{d^n} \), and heuristics suggest that only finitely many terms are prime. For instance, taking \( a_0 = 3 \) and \( f(x) = (x-1)^2 + 1 \) yields the Fermat numbers, whose primality have been a mystery since Fermat first speculated about them in 1640. 

A more reasonable hope is to obtain some qualitative information about the prime factorizations of $a_n$, for instance by considering the whole collection
\[ P(g,f,a_0) := \{\text{prime divisors of non-zero terms of } (g(f^n(a_0)))_{n \geq 0} \}. \]

If this set is sparse within the set of all primes, then at least the sequence $(g(f^n(a_0)))_{n \geq 0}$ do not in the aggregate have too many small prime factors. In 2008, Jones \cite[Theorem 1.1]{jones08} demonstrated that for monic polynomials \( f(x), g(x) \in \Z[x] \), with \( f \) quadratic and under certain conditions, the natural density of \( P(g, f, a_0) \) is zero. Similar results were proved by Hamblen \textit{et al.} \cite{HJM2014} in 2014 for binomials of the type $z^d+c$. One of the crucial property required in both the papers were the stability or eventual stability of the sequences which can be easily deduced from Theorems \ref{main thm1} and \ref{main thm2}.  For example, Theorem 1.6 of \cite{HJM2014} (with \( K = \Q \)) follows as a corollary of Theorem \ref{main thm1}. Beyond stability, these theorems provide detailed insights into the number and degrees of irreducible factors of iterates, as elaborated in Section \ref{sec6.3}.

The eventual stability of polynomials is a central theme in the arithmetic of dynamical systems and plays a key role in proving Sookdeo's conjecture \cite{sook11}, which we discuss in Section \ref{sec6.9}. Eventual stability has also been pivotal in recent proofs of finite-index results for arboreal representations (see \cite{tucker19, tucker21}). It also has a lot of applications in preimage curves and arboreal Galois representations, as detailed in \cite[Section 3]{jones levy}.

A polynomial \( f(x) \in \Q[x] \) of degree \( d \) is said to be \textit{stable} (or \textit{dynamically irreducible}) over \( \Q \) if \( f^n(x) \) is irreducible over \( \Q \) for all \( n \geq 1 \). It is said to be \textit{eventually stable} if there exists a constant \( C_f \), depending only on \( f \), such that the number of irreducible factors of \( f^n(x) \) is bounded by \( C_f \). From Corollary \ref{cor1.2}, it follows that a \( p^r \)-pure polynomial is always eventually stable with \( C_f = \max_{n \in \N} \gcd(d^n, r) < r \). In fact, taking $f$ and $g$ to be same in Theorem \ref{main thm1}, we can see that, $f$ is eventually stable when $f$ satisfies the conditions of Theorem \ref{main thm1}.

Our next result provides eventual stability to a broader class of polynomials compared to Theorems \ref{main thm1} and \ref{main thm2}, although it does not provide explicit details about the Newton polygon structure or the number and degrees of irreducible factors of \( f^n(x) \) that Theorems \ref{main thm1}, \ref{main thm2} offer.



\begin{theorem} \label{p5.2}
	Let $p$ be a prime, and let $f(x) = a_d x^d + a_{d-1} x^{d-1} + \cdots + a_1 x + a_0,~ a_0 \neq 0$ be a polynomial of degree $d$ with rational coefficients such that $\nu_p(a_i) > 0$ for all $i$, $0 \leq i < d$, and $\nu_p(a_d) = 0$. Then $f^n(x)$ is eventually stable over $\mathbb{Q}$.
\end{theorem}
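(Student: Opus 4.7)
The plan is to bound uniformly the number of monic irreducible $\mathbb{Q}$-factors of $f^n(x)$ by the quantity $v := \nu_p(a_0) \geq 1$. The key input is the mod-$p$ identity $\overline{f}(x) = \overline{a_d}\, x^d$ in $\mathbb{F}_p[x]$, which is immediate from the hypothesis and yields $\overline{f^n}(x) = \overline{a_d}^{\,N}\, x^{d^n}$ by induction on $n$, where $N = 1 + d + \cdots + d^{n-1}$. Since $a_d$ is a unit in $\mathbb{Z}_{(p)}$, the polynomial $f^n(x)/a_d^N$ is monic in $\mathbb{Z}_{(p)}[x]$ and reduces to $x^{d^n}$ modulo $p$. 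Gauss's lemma in the PID $\mathbb{Z}_{(p)}$ then ensures that every monic $\mathbb{Q}$-irreducible factor $h(x)$ of $f^n/a_d^N$ actually lies in $\mathbb{Z}_{(p)}[x]$ and satisfies $\overline{h}(x) = x^{\deg h}$; in particular $\nu_p(h(0)) \geq 1$.

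The second ingredient is the computation $\nu_p(f^n(0)) = v$ for every $n \geq 1$, which I would establish by induction. The case $d = 1$ is trivial because $f^n$ is then itself linear, so assume $d \geq 2$. If $c = f^{n-1}(0)$ satisfies $\nu_p(c) = v$, expanding
\[
f(c) \;=\; a_d c^d + a_{d-1}c^{d-1} + \cdots + a_1 c + a_0
\]
shows that the constant term contributes valuation exactly $v$, every middle term $a_i c^i$ with $1 \leq i \leq d-1$ contributes $\nu_p(a_i) + iv \geq 1 + v > v$, and $a_d c^d$ contributes $dv \geq 2v > v$. The strict minimum is uniquely attained at $a_0$, forcing $\nu_p(f(c)) = v$.

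Combining the two pieces: if $f^n(x) = a_d^N \prod_{i=1}^{k_n} h_i(x)$ is the factorization into monic $\mathbb{Q}$-irreducibles, then evaluating at $x = 0$ and applying $\nu_p$ gives
\[
\sum_{i=1}^{k_n} \nu_p\bigl(h_i(0)\bigr) \;=\; \nu_p\bigl(f^n(0)\bigr) \;=\; v.
\]
Since every summand is at least $1$, we conclude $k_n \leq v$ for all $n$, so $f^n$ is eventually stable with stability constant bounded by $\nu_p(a_0)$. The only step requiring some care is the appeal to Gauss's lemma to guarantee that the $\mathbb{Q}$-irreducible factors can actually be chosen monic in $\mathbb{Z}_{(p)}[x]$ (so that their constant terms lie in $\mathbb{Z}_{(p)}$ and their images mod $p$ make sense as factors of $x^{d^n}$); the remainder of the argument is straightforward bookkeeping with $p$-adic valuations.
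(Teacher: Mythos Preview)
Your proof is correct and follows essentially the same approach as the paper: both arguments establish by induction that $f^n(x)\equiv a_d^{(d^n-1)/(d-1)}x^{d^n}\pmod{p}$ and $\nu_p(f^n(0))=\nu_p(a_0)$, and then bound the number of irreducible factors by $\nu_p(a_0)$. The only cosmetic difference is in the final step: the paper phrases the bound via the Newton polygon of $f^n$ together with Dumas' theorem, whereas you obtain it directly from Gauss's lemma over $\mathbb{Z}_{(p)}$ and the additivity of $\nu_p$ on constant terms---but these are two packagings of the same observation that each monic irreducible factor has constant term of positive $p$-adic valuation summing to $\nu_p(a_0)$.
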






The proof of Theorem \ref{p5.2} is given in Section \ref{sec5}. 

For a polynomial $g(x) \in \mathbb{Q}[x]$, we say $f$ is \textit{dynamically irreducible at} $g$ if $g \circ f^n$ is irreducible for all $n \geq 0$.

Let $m$ be a positive integer, and let $b_0, b_1, \ldots, b_m$ denote arbitrary integers such that $|b_0| = |b_m| = 1$. A \textit{Schur polynomial} of degree $m$, denoted by $G_m$, is defined as:
\[
    G_m(x) = b_0 + b_1 x + b_2 \frac{x^2}{2!} + \cdots + b_m \frac{x^m}{m!}.
\]

It is well known that $G_m$ is irreducible for all $m$ (see \cite{schur29 1}, \cite[Theorem 2]{filaseta bessel}).

In the special case where $b_i = 1$ for all $i, 0 \leq i \leq m$, $G_m$ becomes the \textit{truncated exponential polynomial} $E_m$. An independent proof of the irreducibility of truncated exponential polynomials using Newton polygons was given by Coleman in 1987 (cf. \cite{coleman}).

In Section \ref{sec6.1}, we prove the following result, which demonstrates how Theorem \ref{main thm2} can be utilized to construct a large family of polynomials $f$ that are dynamically irreducible at \textit{Schur polynomials}, and consequently, at \textit{truncated exponential polynomials}, irrespective of whether these polynomials are irreducible themselves.

\begin{theorem} \label{prop1.6}
    Fix $m \geq 1$ and let $G_m(x)$ be the Schur polynomial of degree $m$. Let $f(x) \in \mathbb{Q}[x]$ be a polynomial of degree $d$ with leading coefficient $a_d$ such that $\nu_p(a_d)=0$ and $f(x) \equiv a_d x^d \pmod{p}$. If $d$ is coprime to $m!$ and $\gcd(b_i, m) = 1$ for all $0 < i < m$, then $f$ is dynamically irreducible at $G_m$.
\end{theorem}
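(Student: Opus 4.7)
The plan is to prove $G_m(f^n(x))$ is irreducible over $\mathbb{Q}$ for every $n \geq 0$ by analyzing its Newton polygon at each prime dividing $m$. The base case $n=0$ is Schur's classical irreducibility theorem. For $n \geq 1$, I would apply the corollary to Theorem \ref{main thm2} at each $p \mid m$ to describe $NP_p(G_m \circ f^n)$ explicitly, and then deduce irreducibility from the denominators of its edge-slopes together with the hypothesis $\gcd(d,m!)=1$.

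Fix a prime $p \mid m$. The assumptions $|b_0|=|b_m|=1$ and $\gcd(b_i,m)=1$ for $0<i<m$ force $\nu_p(b_i)=0$ for every $0\leq i\leq m$, so $NP_p(G_m)$ is the lower convex hull of the points $\{(m-i,\,-\nu_p(i!)) : 0\leq i\leq m\}$. Legendre's formula together with the lower-hull property forces every edge-slope $\lambda_j$ to lie strictly between $0$ and $1$. Hence I may take $u=1$, $\beta=d$ in Theorem \ref{main thm2}: $\gcd(u,\beta)=1$ is automatic, $u=1>|\lambda_j|$ for every $j$, and the congruence $f(x)\equiv a_d x^d\pmod p$ gives $\nu_p(a_i)\geq 1\geq (d-i)/d$ for every $0\leq i<d$. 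The corollary to Theorem \ref{main thm2} then identifies $NP_p(G_m\circ f^n)$ as the horizontal $d^n$-dilation of $NP_p(G_m)$: an edge of slope $a_j/b_j$ (in lowest terms) and length $\ell_j$ rescales to an edge of slope $a_j/(b_j d^n)$ of length $d^n\ell_j$. Since each height $a_j\leq\nu_p(m!)<m$ and $\gcd(d,m!)=1$, we also have $\gcd(a_j,d^n)=1$, so $a_j/(b_j d^n)$ is already in lowest terms with denominator $b_j d^n$.

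By the Ore--Dumas Newton-polygon factorization principle, any irreducible factor of $G_m\circ f^n$ contributes to edge $j$ a sub-segment of horizontal length a non-negative multiple of $b_j d^n$; hence each factor degree is divisible by $d^n\cdot\gcd_j(b_j)$. I would then establish the key combinatorial identity
\[
\gcd_j(b_j)=p^{\nu_p(m)}\quad\text{for every prime } p\mid m.
\]
This is immediate when $m=p^k$, since $NP_p(G_m)$ is a single edge of height $(p^k-1)/(p-1)$ and length $p^k$, coprime because $(p^k-1)/(p-1)\equiv 1\pmod p$. In general it follows from a case analysis of the lower-hull vertices based on the base-$p$ expansion of $m$. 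Granting the identity and taking the least common multiple over $p\mid m$, every factor degree of $G_m\circ f^n$ is divisible by $md^n$, forcing factor degrees to lie in $\{0,\,md^n\}$, and therefore $G_m\circ f^n$ is irreducible.

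The main obstacle is the combinatorial identity $\gcd_j(b_j)=p^{\nu_p(m)}$ for each $p\mid m$. Showing every edge denominator $b_j$ is divisible by $p^{\nu_p(m)}$ requires a careful examination of the lower convex hull of $NP_p(G_m)$, identifying which candidate vertices $(m-i,\,-\nu_p(i!))$ actually appear and relating the $p$-adic valuation of each edge's length to the base-$p$ structure of $m$; the argument should proceed by induction on $\nu_p(m)$, and constitutes the technical heart of the proof.
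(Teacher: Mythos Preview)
Your approach is correct and matches the paper's: apply Theorem~\ref{main thm2} (with $u=1$, $\beta=d$) at each prime $p\mid m$ to dilate $NP_p(G_m)$ by $d^n$, then use the slope denominators to force every irreducible factor of $G_m\circ f^n$ to have degree divisible by $d^n p^{\nu_p(m)}$, and combine over $p\mid m$.

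The one place you overcomplicate is what you call ``the technical heart''. The identity $\gcd_j(b_j)=p^{\nu_p(m)}$ does not require an induction on $\nu_p(m)$ or a case analysis of the hull. The paper simply invokes Coleman's explicit description of $NP_p(G_m)$ (\cite[Lemma~II]{coleman}): writing $m=\sum_{i=1}^N c_i p^{m_i}$ in base $p$ with $m_1<\cdots<m_N$, the Newton polygon has exactly $N$ edges with slopes
\[
\lambda_i=\frac{p^{m_i}-1}{p^{m_i}(p-1)}=\frac{1+p+\cdots+p^{m_i-1}}{p^{m_i}},
\]
already in lowest terms since the numerator is $\equiv 1\pmod p$. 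Thus the denominators are precisely $p^{m_1},\ldots,p^{m_N}$, and their gcd is $p^{m_1}=p^{\nu_p(m)}$. After dilation the denominators become $d^n p^{m_i}$ in lowest terms (the numerators are $<p^{m_i}\le m$, hence coprime to $d$ by $\gcd(d,m!)=1$), and the rest of your argument goes through verbatim.
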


The following result follows immediately from Theorem \ref{prop1.6}.

\begin{corollary}\label{prop1.5}
    Let $f$ and $d$ be as in Theorem \ref{prop1.6}, and let $E_m$ denote the truncated exponential polynomial of degree $m$. If $d$ is coprime to $m!$, then $f$ is dynamically irreducible at $E_m$.
\end{corollary}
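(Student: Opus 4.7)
The plan is to view Corollary \ref{prop1.5} as an immediate specialization of Theorem \ref{prop1.6}. Indeed, the truncated exponential polynomial
\[ E_m(x) = 1 + x + \frac{x^2}{2!} + \cdots + \frac{x^m}{m!} \]
is exactly the Schur polynomial $G_m$ obtained from the coefficient choice $b_0 = b_1 = \cdots = b_m = 1$. This is a legitimate Schur polynomial because the definition only requires $|b_0| = |b_m| = 1$, which is satisfied.

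Having identified $E_m$ with a Schur polynomial, the remaining work is to check that the hypotheses of Theorem \ref{prop1.6} apply. The structural assumption on $f$, namely that its leading coefficient $a_d$ is a $p$-adic unit and that $f(x) \equiv a_d x^d \pmod{p}$, is part of the corollary's hypothesis and is carried over verbatim. The divisibility condition $\gcd(b_i, m) = 1$ for $0 < i < m$ becomes trivial since each $b_i = 1$ in this specialization, and hence requires no verification. Finally, the degree condition $\gcd(d, m!) = 1$ is identical in both statements.

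Applying Theorem \ref{prop1.6} with this choice of $G_m = E_m$ then immediately yields that $f$ is dynamically irreducible at $E_m$, which is precisely the conclusion of the corollary. There is no substantive obstacle in this argument: the ``main step'' is simply the structural recognition that $E_m$ is a Schur polynomial for which the arithmetic condition on the $b_i$ is vacuous. In this sense Corollary \ref{prop1.5} is genuinely a corollary, not a theorem in disguise, and its proof should occupy at most a few lines.
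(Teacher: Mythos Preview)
Your proposal is correct and matches the paper's approach exactly: the paper simply states that Corollary \ref{prop1.5} ``follows immediately from Theorem \ref{prop1.6},'' and your argument spells out precisely why, namely that $E_m$ is the Schur polynomial with all $b_i = 1$, making the condition $\gcd(b_i, m) = 1$ vacuous.
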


In Section \ref{sec6.2}, we use Theorem \ref{main thm1} to construct an iterative family of non-monogenic polynomials. The Section \ref{sec6.3} applies Theorems \ref{main thm1} and \ref{main thm2} to analyze the number of irreducible factors and their degrees in the iterates of certain polynomials. Finally, in Section \ref{sec6.9}, we present a family of examples that satisfy Sookdeo's Conjecture, showcasing an application of Theorems \ref{main thm1} and \ref{main thm2}.



\section{Preliminaries}
Throughout this paper, $p$ will always denote a prime number and we use the notation \( NP_p(h) \) to denote the Newton polygon of \( h(x) \in \mathbb{Q}[x] \) with respect to \( p \). To prove Theorems \ref{main thm1} and \ref{main thm2}, we will need the following four lemmas.

\begin{lemma} \label{lemma2.1}
Let \( g(x) = b_e x^e + b_{e-1} x^{e-1} + \cdots + b_0 \), with \( b_0 \neq 0 \), be a polynomial of degree \( e \) with rational coefficients such that \( \nu_p(b_e)=0 \) and \( p \) divides \( b_s \) for \( 0 \leq s \leq e-1 \). Let \( m_0, m_1, \dots, m_{t-1}, m_t \) be integers such that the successive vertices of the Newton polygon of \( g(x) \) with respect to \( p \) are given by the set
\[
\{(0,0), (e - m_1, \nu_p(b_{m_1})), \dots, (e - m_{t-1}, \nu_p(b_{m_{t-1}})), (e, \nu_p(b_0))\},
\]
where \( m_0 = e \) and \( m_t = 0 \). If \( \alpha \) is a positive integer such that \( \nu_p(b_0) \leq \frac{\nu_p(b_{m_1})}{e - m_1} \alpha \), then for any \( s \) with \( 0 \leq s \leq t \), we have
\begin{align} \label{eqn:2.1}
    \nu_p(b_{m_s}) \leq \frac{\nu_p(b_{m_1})}{e - m_1} (\alpha - m_s).
\end{align}
\end{lemma}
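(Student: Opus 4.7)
The plan is to exploit the convexity of the Newton polygon, together with the fact that the constant-term bound in the hypothesis is exactly what controls how much the later (steeper) slopes can contribute. Set $\lambda_1 = \nu_p(b_{m_1})/(e-m_1)$ and, as in the statement of Theorem \ref{main thm1}, write $\lambda_j = (\nu_p(b_{m_j}) - \nu_p(b_{m_{j-1}}))/(m_{j-1} - m_j)$ for the successive slopes. Since the vertices $(e-m_j, \nu_p(b_{m_j}))$ lie on the lower convex hull read from left to right, convexity gives $\lambda_1 \leq \lambda_2 \leq \cdots \leq \lambda_t$. Telescoping along the edges up to the $s$-th vertex yields the basic identity
\[
\nu_p(b_{m_s}) \;=\; \sum_{j=1}^{s} \lambda_j (m_{j-1} - m_j),
\]
valid for every $0 \leq s \leq t$ (with the empty sum giving $\nu_p(b_e) = 0$).

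Next I would rewrite the target inequality \eqref{eqn:2.1} in a form that isolates the contribution of the steeper slopes. Using $\sum_{j=1}^{s}(m_{j-1}-m_j) = e - m_s$, the claim
$\nu_p(b_{m_s}) \leq \lambda_1(\alpha - m_s)$ is algebraically equivalent to
\[
\sum_{j=1}^{s} (\lambda_j - \lambda_1)(m_{j-1} - m_j) \;\leq\; \lambda_1(\alpha - e).
\]
Every summand on the left is non-negative, because $\lambda_j \geq \lambda_1$ and $m_{j-1} > m_j$.

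Finally I would feed in the hypothesis $\nu_p(b_0) \leq \lambda_1 \alpha$. Applying the telescoping identity at $s = t$ (so $m_t = 0$) gives $\nu_p(b_0) = \sum_{j=1}^{t}\lambda_j(m_{j-1}-m_j)$, and the same algebraic rearrangement converts the hypothesis into
\[
\sum_{j=1}^{t} (\lambda_j - \lambda_1)(m_{j-1} - m_j) \;\leq\; \lambda_1(\alpha - e).
\]
Since all summands are non-negative, the partial sum to $s$ is at most the full sum to $t$, and the displayed inequality for $s$ follows immediately, proving \eqref{eqn:2.1}.

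There is no real obstacle here; the only bit that requires care is the purely bookkeeping step of recognizing that both $\nu_p(b_{m_s})$ and the assumed bound on $\nu_p(b_0)$ can be rewritten so that the non-negative ``excess over $\lambda_1$'' $\sum(\lambda_j-\lambda_1)(m_{j-1}-m_j)$ becomes the sole object to be bounded. Once that reformulation is in place, the proof reduces to the trivial fact that a partial sum of non-negative terms is dominated by the total sum.
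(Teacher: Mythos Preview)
Your proof is correct. Both your argument and the paper's rest on the same convexity fact, but they package it differently. The paper argues directly with a single secant-slope inequality: from the increasing slopes it extracts $\lambda_1 \leq \dfrac{r_t - r_s}{m_s}$ (where $r_i := \nu_p(b_{m_i})$), which rearranges to $r_s \leq r_t - \lambda_1 m_s$, and then substitutes the hypothesis $r_t \leq \lambda_1\alpha$ in one stroke. Your route instead telescopes $r_s$ along all the edges and isolates the ``excess over $\lambda_1$'' sum $\sum_{j\le s}(\lambda_j-\lambda_1)(m_{j-1}-m_j)$, bounding this partial sum of non-negative terms by the full sum at $s=t$, which the hypothesis controls. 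Your version is slightly longer but makes the monotone structure fully explicit and would adapt cleanly if one ever needed finer intermediate control on the individual $\lambda_j$; the paper's one-line secant estimate is more economical here since only the endpoint comparison is needed.
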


\begin{proof}
Denote \( \nu_p(b_{m_s}) \) by \( r_s \) for \( 0 \leq s \leq t \). Using the fact that \( m_t = 0 \) and the hypothesis \( \nu_p(b_0) \leq \frac{r_1}{e - m_1} \alpha \), it is clear that \eqref{eqn:2.1} holds for \( s = t \). 

Now, let us fix an integer \( s \) with \( 0 \leq s < t \). As the slopes of the edges of the Newton polygon of \( g(x) \) with respect to \( p \) are in increasing order, we can easily verify that
\[
\frac{r_1}{e - m_1} \leq \frac{r_{s+1}-r_s}{m_s-m_{s+1}} \leq \frac{r_t - r_s}{e-(e-m_s)}.
\]
Using the above inequality along with the hypothesis \( r_t \leq \frac{r_1}{e - m_1} \alpha \), we obtain
\[
r_s \leq r_t - \frac{r_1}{e - m_1} m_s \leq \frac{r_1}{e - m_1} \alpha - \frac{r_1}{e - m_1} m_s.
\]
This completes the proof of the lemma.
\end{proof}

\begin{lemma}\label{lemma2.2}
Let \( g(x) = b_e x^e + b_{e-1} x^{e-1} + \cdots + b_0  \in \Q[x]\), with \( b_0 \neq 0 \), be a polynomial of degree \( e \). Let \( m_0, m_1, \dots, m_{t-1}, m_t \) be integers such that the successive vertices of the Newton polygon of \( g(x) \) with respect to \( p \) are given by the set
\[
\{(0, \nu_p(b_e)), (e - m_1, \nu_p(b_{m_1})), \dots, (e - m_{t-1}, \nu_p(b_{m_{t-1}})), (e, \nu_p(b_0))\},
\]
where \( m_0 = e \), \( m_t = 0 \), and the slopes of the segments of the Newton polygon of $g$ are given by
\[
\lambda_i = \frac{\nu_p(b_{m_i}) - \nu_p(b_{m_{i-1}})}{m_{i-1} - m_i} \quad \text{for } 1 \leq i \leq t.
\]
Let \( s \) and \( \alpha \) be non-negative integers. Then we have the following:
\begin{itemize}
    \item[(i)] If \( s + 1 < \alpha \leq t \), then
    \[
    \nu_p(b_{m_{s+1}}) + \lambda_{s+1} (m_{s+1} - j) < \nu_p(b_{m_\alpha}) + \lambda_{\alpha} (m_{\alpha} - j), \quad \text{for all } j \leq m_{\alpha}.
    \]
    \item[(ii)] If \( 0 \leq \alpha \leq s + 1 \leq t \), then
    \[
    \nu_p(b_{m_{s+1}}) = \nu_p(b_{m_\alpha}) + \sum_{i=\alpha+1}^{s+1} \lambda_i (m_{i-1} - m_i).
    \]
\end{itemize}
\end{lemma}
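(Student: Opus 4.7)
My plan is to handle the two parts in the order (ii) then (i), since (ii) is a direct telescoping identity that will also serve as the computational core of (i).

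For part (ii), I would simply unwind the definition of $\lambda_i$. Rewriting the defining relation $\lambda_i = \frac{\nu_p(b_{m_i}) - \nu_p(b_{m_{i-1}})}{m_{i-1}-m_i}$ as $\lambda_i(m_{i-1}-m_i) = \nu_p(b_{m_i}) - \nu_p(b_{m_{i-1}})$ and summing from $i=\alpha+1$ to $i = s+1$ produces the telescoping identity
\[
\sum_{i=\alpha+1}^{s+1} \lambda_i(m_{i-1}-m_i) = \nu_p(b_{m_{s+1}}) - \nu_p(b_{m_\alpha}),
\]
which is exactly the claim in (ii). (The hypothesis $\alpha \le s+1$ ensures the sum is a valid, possibly empty, telescoping range.)

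For part (i), the key idea is to think of each side as a linear function of $j$ evaluated at a Newton-polygon $x$-coordinate. The line extending edge $s+1$ with slope $\lambda_{s+1}$ lies strictly below the line extending edge $\alpha$ with slope $\lambda_\alpha$ at the vertex $(e-m_\alpha, \nu_p(b_{m_\alpha}))$ of the polygon, because slopes along a Newton polygon are \emph{strictly} increasing. I would first verify the inequality at $j=m_\alpha$. Applying part (ii) with the indices swapped (legitimate since $s+1 < \alpha$) gives
\[
\nu_p(b_{m_\alpha}) - \nu_p(b_{m_{s+1}}) = \sum_{i=s+2}^{\alpha} \lambda_i(m_{i-1}-m_i),
\]
so the required inequality at $j=m_\alpha$ becomes
\[
\sum_{i=s+2}^{\alpha}(\lambda_i - \lambda_{s+1})(m_{i-1}-m_i) > 0,
\]
which holds because $\lambda_i > \lambda_{s+1}$ for each $i \geq s+2$ and $m_{i-1} > m_i$, with the sum being non-empty since $\alpha \geq s+2$.

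To extend to an arbitrary $j \leq m_\alpha$, I would subtract the two sides and view the difference as a linear function of $j$: namely,
\[
\bigl[\nu_p(b_{m_\alpha}) + \lambda_\alpha(m_\alpha - j)\bigr] - \bigl[\nu_p(b_{m_{s+1}}) + \lambda_{s+1}(m_{s+1}-j)\bigr] = C + (\lambda_\alpha - \lambda_{s+1})(m_\alpha - j),
\]
where $C > 0$ is the (positive) value obtained at $j = m_\alpha$. Since $\lambda_\alpha > \lambda_{s+1}$ and $m_\alpha - j \geq 0$ by hypothesis, the second summand is non-negative, so the entire difference stays strictly positive. This finishes (i). I do not anticipate a real obstacle here; the only subtle point is remembering that Newton polygon slopes are \emph{strictly} increasing (so each $\lambda_i - \lambda_{s+1} > 0$ for $i > s+1$), which is what turns the inequality in (i) from weak to strict.
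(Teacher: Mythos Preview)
Your proof is correct and essentially the same as the paper's: part (ii) is the identical telescoping argument, and for part (i) both you and the paper split $m_{s+1}-j = (m_{s+1}-m_\alpha) + (m_\alpha - j)$ and use the strict convexity of the Newton polygon (you via the positive sum $\sum_{i=s+2}^{\alpha}(\lambda_i-\lambda_{s+1})(m_{i-1}-m_i)$, the paper via the equivalent chord-slope inequality $\lambda_{s+1} < \frac{r_\alpha-r_{s+1}}{m_{s+1}-m_\alpha}$) together with $\lambda_\alpha > \lambda_{s+1}$ for the remaining term. The only cosmetic difference is that you establish (ii) first and invoke it inside (i), whereas the paper treats (i) directly.
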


\begin{proof}
Denote \( \nu_p(b_{m_i}) \) by \( r_i \) for \( 0 \leq i \leq t \). We will prove the two cases separately. \\

\noindent \textbf{Case (i):} Suppose \( s + 1 < \alpha \). Since the slopes of the edges of the Newton polygon of \( g(x) \) with respect to \( p \) are in increasing order, we have
\[
\lambda_{s+1} < \frac{r_{\alpha} - r_{s+1}}{m_{s+1} - m_{\alpha}} \quad \text{and} \quad \lambda_{s+1} < \lambda_{\alpha}.
\]
Using these inequalities and the identity \( \lambda_{s+1} (m_{s+1} - j) = \lambda_{s+1} (m_{s+1} - m_{\alpha}) + \lambda_{s+1} (m_{\alpha} - j) \) for \( j \leq m_{\alpha} \), it follows that
\[
\lambda_{s+1} (m_{s+1} - j) < r_{\alpha} - r_{s+1} + \lambda_{\alpha} (m_{\alpha} - j),
\]
which can be rearranged as
\[
r_{s+1} + \lambda_{s+1} (m_{s+1} - j) < r_{\alpha} + \lambda_{\alpha} (m_{\alpha} - j).
\]
This completes the proof of Case (i).

\noindent \textbf{Case (ii):} Suppose \( 0 \leq \alpha \leq s + 1 \leq t \). Using the definition of $\lambda_i$, we have $r_{s+1} = r_s + \lambda_{s+1} (m_s - m_{s+1})$. With iterative similar equalities, we obtain 
\[
\nu_p(b_{m_{s+1}}) = \nu_p(b_{m_\alpha}) + \sum_{i=\alpha+1}^{s+1} \lambda_i (m_{i-1} - m_i).
\]
This completes the proof of the lemma.
\end{proof}

\begin{lemma}\label{lemma2.3}
With the notations and assumptions of Theorem \ref{main thm1}, denote \( \nu_p(b_{m_i}) \) by \( r_i \) for \( 0 \leq i \leq t \). Assume that the successive vertices of the Newton polygon of \( (g \circ f^n)(x) \) with respect to \( p \) are given by the set
\[
\{(0,0), (d^n(e-m_1), r_1), \dots, (d^n(e-m_{t-1}), r_{t-1}), (d^n e, r_t)\}.
\]
If the polynomial \( (g \circ f^{n+1})(x) \) is given by \( (g \circ f^{n+1})(x) = \sum\limits_{k=0}^{d^{n+1}e} C_k x^k \), then for \( k = d^{n+1} m_s \) with \( 0 \leq s \leq t \), we have \( \nu_p(C_k) = r_s \).
\end{lemma}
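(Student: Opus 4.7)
The natural approach is to expand
$$(g \circ f^{n+1})(x) \;=\; h(f(x)) \;=\; \sum_{j = 0}^{d^n e} H_j\, f(x)^j,$$
where $h := g \circ f^n$ and $H_j = [x^j]\, h(x)$, and then to extract the coefficient of $x^k$ for $k = d^{n+1} m_s$, giving $C_k = \sum_j H_j\,[x^k] f(x)^j$. Because $\deg f(x)^j = dj < k$ whenever $j < d^n m_s$, only indices $j \geq d^n m_s$ contribute. The boundary term $j = d^n m_s$ satisfies $[x^k] f(x)^j = a_d^{d^n m_s}$, so it contributes $H_{d^n m_s}\,a_d^{d^n m_s}$, whose $p$-adic valuation is exactly $r_s$ by the hypothesis that $(d^n(e - m_s), r_s)$ is a vertex of $NP_p(h)$ and the assumption $\nu_p(a_d) = 0$. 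The task therefore reduces to proving that every other index $d^n m_s < j \leq d^n e$ yields a term of \emph{strictly} greater valuation; an ultrametric argument will then force $\nu_p(C_k) = r_s$.

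For such $j$ I plan to combine two lower bounds. By Dumas' Theorem applied to $f(x)^j$, the polygon $NP_p(f^j)$ starts at the origin with first slope $\lambda$, so $\nu_p([x^k] f(x)^j) \geq \lambda(dj - k) = \lambda d (j - d^n m_s)$. Letting $i^*$ be the unique index with $j \in (d^n m_{i^*}, d^n m_{i^*-1}]$, the assumed shape of $NP_p(h)$ bounds $\nu_p(H_j)$ from below by the polygon value at $x = d^n e - j$; rewriting this via Lemma \ref{lemma2.2}(ii) to telescope $r_s = r_{i^*-1} + \sum_{i=i^*}^{s}\lambda_i(m_{i-1}-m_i)$ yields
$$\nu_p(H_j) \;\geq\; r_s \;-\; \lambda_{i^*}\!\left(\frac{j}{d^n} - m_{i^*}\right) \;-\; \sum_{i=i^*+1}^{s}\lambda_i(m_{i-1}-m_i).$$
Adding the two bounds and cancelling $r_s$ collapses the required strict inequality to
$$\bigl(\lambda d - \lambda_{i^*}/d^n\bigr)\bigl(j - d^n m_{i^*}\bigr) \;+\; \sum_{i=i^*+1}^{s}\bigl(\lambda d^{n+1} - \lambda_i\bigr)(m_{i-1}-m_i) \;>\; 0.$$

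To conclude, Lemma \ref{lemma2.1} with $\alpha = d + e - 1$, combined with the trivial $r_{i-1} \geq \lambda_1(e - m_{i-1})$ coming from the first edge of $NP_p(g)$, forces $\lambda_i \leq \lambda_1 d$ for every $i$, with \emph{strict} inequality under hypothesis (i) of Theorem \ref{main thm1}. Pairing this with $\lambda \geq \lambda_1$ makes both factors $\lambda d - \lambda_{i^*}/d^n$ and $\lambda d^{n+1} - \lambda_i$ strictly positive, so the displayed inequality holds term-by-term. The branch $\lambda > \lambda_1$ of hypothesis (ii) produces the same strictness at once. The subcase $\lambda = \lambda_1$ with $\nu_p(f(0)) > d\lambda$ is where I expect the main obstacle, since in the base case $n = 0$ one can have $\lambda_i = \lambda_1 d$ and the displayed inequality may degenerate at $i^* = s$; the remedy is to sharpen the Dumas bound on $[x^k] f(x)^j$ by splitting $f(x) = a_0 + x\,\tilde f(x)$ and observing, via the multinomial expansion, that every factor $a_0$ contributes the strictly larger $\nu_p(a_0) > d\lambda$, while the $a_0$-free contribution $[x^{k-j}]\tilde f(x)^j$ only exists for $j \leq dm_s$ and is controlled by the truncated Newton polygon of $f$; this supplies the missing strict gain. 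Once strictness holds in every case, the ultrametric principle delivers $\nu_p(C_k) = r_s$.
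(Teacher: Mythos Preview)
Your approach is correct and follows the same overall strategy as the paper: expand $(g\circ f^{n+1})(x)=h(f(x))$ with $h=g\circ f^n$, isolate the contribution $H_{d^n m_s}a_d^{d^n m_s}$ of valuation exactly $r_s$, and show every other $H_j\cdot[x^k]f(x)^j$ has strictly larger valuation. The difference is tactical. The paper bounds $\nu_p(H_j)$ by the \emph{first-edge line} $\nu_p(H_j)\ge\frac{\lambda_1}{d^n}(d^ne-j)$ throughout, which keeps the algebra simple: for $0<s<t$ the required strictness then comes cheaply from $\lambda_1<\lambda_{s+1}$, and only $s=t$ needs the hypothesis (i)/(ii) split. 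You instead use the sharper \emph{segment-by-segment} bound at the index $i^*$ where $j$ lives, telescoping via Lemma~\ref{lemma2.2}(ii); this forces you to prove the auxiliary inequality $\lambda_i\le\lambda_1 d$ (your use of Lemma~\ref{lemma2.1} with $\alpha=d+e-1$ together with $r_{i-1}\ge\lambda_1(e-m_{i-1})$ is the right way to get it, and in fact the inequality is strict for $i\ge 2$ regardless of (i) or (ii)). Both routes arrive at the same conclusion; the paper's cruder bound buys a cleaner case structure, while yours gives a single displayed inequality to check.

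Your identification of the degenerate boundary case under hypothesis~(ii) with $\lambda=\lambda_1$ is accurate but more restricted than you suggest: equality $\lambda_i=\lambda_1 d$ can occur only when $i=t=2$, $m_1=1$, $n=0$, and then the single offending index is $j=1$, $k=0$, where the contribution is simply $b_1 a_0$ and the condition $\nu_p(a_0)>d\lambda$ gives strictness in one line. Your proposed multinomial splitting $f=a_0+x\tilde f$ works but is heavier than needed here; the paper likewise just treats $s=t$ directly by writing $C_0=\sum_j B_jA_0^j$ and invoking $\nu_p(A_0)>d\lambda$ (or $\lambda>\lambda_1$, or the strict inequality in (i)).
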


\begin{proof}
Let \( f(x) = \sum_{i=0}^{d} A_i x^i \) and \( (g \circ f^n)(x) = \sum_{j=0}^{d^n e} B_j x^j \). Composition of \( (g \circ f^n)(x) \) and $f$ is given by
\[
(g \circ f^{n+1})(x) = \sum_{j=0}^{d^n e} B_j \left( \sum_{i=0}^{d} A_i x^i \right)^j = \sum_{k=0}^{d^{n+1}e} C_k x^k,
\]
For given \( k = d^{n+1}m_s \), we determine \( \nu_p(C_k) \) by analyzing the \( p \)-adic valuation of the coefficient of \( x^k \) in the expansion of the term:
\begin{equation} \label{e2.2}
B_j \left( \sum_{i=0}^{d} A_i x^i \right)^j
\end{equation}
for each \( j \) in the range \( 0 \leq j \leq d^n e \).

By hypothesis, the successive vertices of the Newton polygon of \( (g \circ f^n)(x) \) are given by the set
\[
\{(0,0), (d^n(e-m_1), r_1), \dots, (d^n(e-m_{t-1}), r_{t-1}), (d^n e, r_t)\}.
\]
This means that \( NP_p(g \circ f^n) \) will appear as shown in Figure \ref{NP of gofn} below.

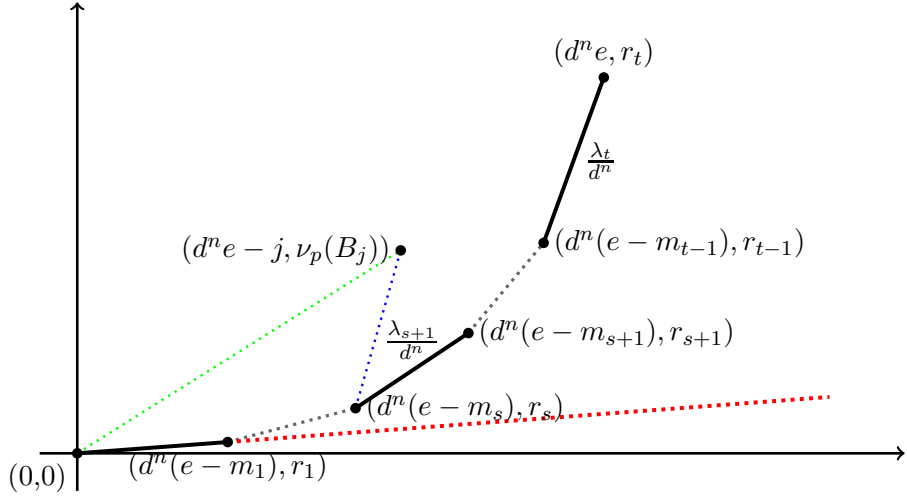
\begin{figure}[h]
    \centering
    \begin{tikzpicture}
        \draw[->, line width=1pt] (0,-0.5) -- (0,6);
        \draw[->, line width=1pt] (-0.5,0) -- (11,0);
        
        \coordinate (A) at (0,0);
        \coordinate (B) at (2,.15);
        \coordinate (C) at (3.7,0.6);
        \coordinate (D) at (5.2,1.6);
        \coordinate (E) at (6.2,2.8);
        \coordinate (F) at (7,5);
        
        \coordinate (I) at (4.3,2.7);
        
        \draw[line width=1.5pt] (A) -- (B);
        \draw[dotted, line width=1.2pt, black!60] (B) -- (C);
        \draw[line width=1.5pt] (C) -- (D) node[midway, above] {$\frac{\lambda_{s+1}}{d^n}$};
        \draw[dotted, line width=1.2pt, black!60] (D) -- (E);
        \draw[line width=1.5pt] (E) -- (F) node[midway, right] {$\frac{\lambda_t}{d^n}$};
        
        \draw[dotted, line width=1.5pt, red] (B) -- ++(8,0.6);
        \draw[dotted, line width=0.9pt, green] (A) -- (I);
        \draw[dotted, line width=0.9pt, blue] (C) -- (I);
        
        \foreach \point/\position/\name in {A/below left/{(0,0)}, B/below/{$(d^n(e-m_1), r_1)$}, C/right/{$(d^n(e-m_{s}), r_{s})$}, D/right/{$(d^n(e-m_{s+1}), r_{s+1})$}, E/right/{$(d^n(e-m_{t-1}), r_{t-1})$}, F/above/{$(d^ne, r_t)$}, I/left/{$(d^ne-j,\nu_p(B_j))$}} {
            \fill (\point) circle (2pt) node [\position] {\name};
        }
    \end{tikzpicture}
    \caption{Newton polygon of \( g \circ f^n(x) \) with respect to $p$}
    \label{NP of gofn}
\end{figure}

Recall that the Newton polygon of \( g \circ f^n \) is the lower convex hull formed using the points \( \{(d^n e - j, \nu_p(B_j)) : 0 \leq j \leq d^n e\} \). Therefore, for any \( 0 < j \leq d^n e \), the point \( (d^n e - j, \nu_p(B_j)) \) will lie on or above the Newton polygon, i.e.,
\[
\frac{\nu_p(B_j) - 0}{d^n e - j - 0} \geq \frac{r_1}{d^n(e - m_1)} = \frac{\lambda_1}{d^n}, \quad \text{for all } j \in \{0, 1, 2, \dots, d^n e - 1\}.
\]
This implies that
\begin{equation}
\nu_p(B_j) \geq \frac{\lambda_1}{d^n} (d^n e - j), \quad \text{for all } j \in \{0, 1, 2, \dots, d^n e\}. \label{e2.3}
\end{equation}
	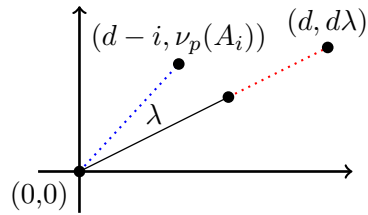
\begin{figure}[!htb]
    \centering
    \begin{minipage}{0.39\textwidth}
        From the hypothesis, the first edge of $NP_p(f)$ has slope $\lambda$. Thus, for $0 < i \leq d$, the point $(d-i, \nu_p(A_i))$ lies above the line joining $(0,0)$ and $(d, d\lambda)$, as shown in Figure \ref{NP of f}.     \end{minipage}
    \hfill
    \begin{minipage}{0.6\textwidth}
        \centering
        \begin{tikzpicture}[scale=1.1]
            \draw[->, line width=1pt] (0,-0.5) -- (0,2); 
            \draw[->, line width=1pt] (-0.5,0) -- (3.3,0); 
            
            \coordinate (A) at (0,0);
            \coordinate (B) at (1.8,0.9);
            \coordinate (C) at (3,1.5);
            \coordinate (D) at (1.2,1.3);
            
            \draw[line width=.5pt] (A) -- (B) node[midway, above] {$\lambda$};
            \draw[dotted, line width=.8pt, red] (B) -- (C);
            
            \draw[dotted, line width=.8pt, blue] (A) -- (D);
            
            \foreach \point/\position/\name in {A/below left/{(0,0)}, B/below/{ }, C/above/{$(d,d\lambda )$}, D/above/{$(d-i, \nu_p(A_i))$}} {
                \fill (\point) circle (2pt) node [\position] {\name}; }
        \end{tikzpicture}
        \caption{Newton polygon of $f$ with respect to $p$}
        \label{NP of f}
    \end{minipage}
\end{figure}
\vspace{1.2cm}

\noindent This implies
\begin{align*}
    \frac{\nu_p(A_i) - 0}{d - i - 0} &\geq \lambda, \quad \text{for all } i \in \{0, 1, 2, \ldots, d-1\};
\end{align*}
which yields
\begin{equation}
    \nu_p(A_i) \geq \lambda (d - i), \quad \text{for all } i \in \{0, 1, 2, \ldots, d-1, d\}. \label{e2.4}
\end{equation}

The proof of the lemma is divided into three cases. We first prove the lemma for the cases $s=0$ and $s=t$ seperately, then proceed with the case $0 < s < t$.

\noindent \textbf{Case $s=0$:} We begin with $s=0$, i.e., $k = d^{n+1}m_0 = d^{n+1}e$. In this situation, the only term of degree $k = d^{n+1}e$ in the expression of $(g \circ f^{n+1})(x)$ is $B_{d^ne}A_d^{d^ne}x^{d^{n+1}e}$, thus we have
\[
\nu_p(C_k) = \nu_p(B_{d^ne}) + d^ne  \nu_p(A_d) = 0 = r_0.
\]
This proves the lemma for $s=0$.

\noindent \textbf{Case $s = t$:} Now consider $s = t$, i.e., $k = d^{n+1}m_t = 0$. Then we have $C_k = C_0 = \sum_{j=0}^{d^n e} B_j A_0^j$. Recall the given fact that $r_t \leq \lambda_{1}(d + e - 1)$. For any $j \leq d^n e$, we use the upper bound of $r_t$ and note that $-d^n \leq -1$ to obtain
\[
r_t \leq \frac{\lambda_{1}}{d^n}(d^{n+1} + d^ne - d^n) \leq \frac{\lambda_1}{d^n} (d^{n+1} + d^n e - 1) = \frac{\lambda_1}{d^n} (d^n e - j) + \frac{\lambda_1}{d^n} (d^{n+1} + j - 1).
\]
Note that $d^{n+1} \geq 1$, and it is easy to verify that for $j \geq 1$, $d^{n+1} + j - 1 \leq d^{n+1} j$. By applying this, along with $\lambda_1 \leq \lambda$ and using Equations \eqref{e2.3} and \eqref{e2.4}, we obtain:
\[
r_t \leq \lambda_{1}(d + e - 1) \leq \frac{\lambda_1}{d^n} (d^n e - j) + \frac{\lambda}{d^n} d^{n+1} j \leq \nu_p(B_j) + j \nu_p(A_0) = \nu_p(B_j A_0^j).
\]
Remember that the above inequality holds only for $j \geq 1$. Furthermore, we note that one of the inequalities above is strict, based on the assumptions (i) and (ii) of Theorem \ref{main thm1}; hence, we have $\nu_p(B_j A_0^j) > r_t$ whenever $j \geq 1$. Also, for $j = 0$, we have $\nu_p(B_0 A_0^0) = \nu_p(B_0) = r_t$. Hence, we conclude that $\nu_p(C_0) = r_t$, which proves the lemma for $s = t$ as well.

\noindent \textbf{Case $0 < s < t$:} Fix an integer $s$ satisfying $0 < s < t$, and consider $k = d^{n+1}m_s$. If $j < \frac{k}{d}$, then the highest possible degree of $x$ in the expansion of \eqref{e2.2} is $dj$, which is strictly smaller than $k$. Therefore, no term of degree $k$ exists in \eqref{e2.2} whenever $j < \frac{k}{d}$.

Now, if $j > \frac{k}{d} = d^n m_s$, we observe that $j - \frac{k}{d} = j - d^n m_s \geq 1$. Suppose that $k$ can be partitioned into $j$ terms as $k = i_1 + i_2 + \cdots + i_j$ with each $i_{\ell} \leq d$ for $1 \leq \ell \leq j$. Then there exists a term in the form $B_j \prod_{\ell=1}^{j} A_{i_{\ell}} x^{i_{\ell}} = \left( B_j \prod_{\ell=1}^{j} A_{i_{\ell}} \right) x^k$ of degree $k$ in the expansion of \eqref{e2.2}. Now, we show that for each such partition of $k$ with $j > \frac{k}{d}$, the inequality $\nu_p(C_k) > r_{s}$ holds. Using Equations \eqref{e2.3} and \eqref{e2.4}, we obtain
\[
\nu_p(C_k) \geq \nu_p\left( B_j \prod_{\ell=1}^{j} A_{i_{\ell}} \right) = \nu_p(B_j) + \sum_{\ell=1}^{j} \nu_p(A_{i_{\ell}}) \geq \frac{\lambda_1}{d^{n}} (d^n e - j) + \sum_{\ell=1}^{j} \lambda (d - i_{\ell}).
\]
Keeping in mind the given fact $\lambda \geq \lambda_1$ and the condition $d - i_{\ell} \geq 0$, we deduce
\begin{align}
    \nu_p(C_k) &\geq \frac{\lambda_1}{d^{n}} (d^n e - j) + \lambda_1 \sum_{\ell=1}^{j} (d - i_{\ell}) \nonumber \\
    &= \lambda_1 \left[ e - \frac{j}{d^n} + dj - \sum_{{\ell}=1}^{j} i_{\ell} \right] \nonumber \\
    &= \lambda_1 \left[ e - \frac{j}{d^n} + dj - k + \frac{k}{d^{n+1}} - m_{s+1} \right] - \lambda_1 \left( \frac{k}{d^{n+1}} - m_{s+1} \right) \label{e2.5} \\
    &> \lambda_1 \left[ e + \left( d - \frac{1}{d^n} \right) \left( j - \frac{k}{d} \right) - m_{s+1} \right] - \lambda_{s+1} \left( \frac{k}{d^{n+1}} - m_{s+1} \right) \nonumber
\end{align}
where the last inequality is strict because for any $s > 0$, we have $\lambda_1 < \lambda_{s+1}$ and $\frac{k}{d^{n+1}} - m_{s+1} = m_{s} - m_{s+1} > 0$. Additionally, using $j - \frac{k}{d} \geq 1$ and $d \geq 1$, we derive
\[
\nu_p(C_k) > \lambda_1 \left[ e + (d - 1) - m_{s+1} \right] - \lambda_{s+1} \left( \frac{k}{d^{n+1}} - m_{s+1} \right).
\]

\noindent Recall the given fact that $r_t \leq \lambda_1 (d + e - 1)$. Applying Lemma \ref{lemma2.1} with $\alpha = e + d - 1$ for the first part of RHS and substituting the values of $k$, $\lambda_{s+1}$, we obtain
\[
\nu_p(C_k) > r_{s+1} - \left( \frac{r_{s+1} - r_s}{m_s - m_{s+1}} \right) \left(\frac{d^{n+1} m_s}{d^{n+1}} - m_{s+1} \right) = r_s.
\]
Thus, for $j > \frac{k}{d} = d^n m_s$, we assert that $\nu_p(C_k) > r_s$. Now for the case $j = \frac{k}{d}$, i.e., $j = d^n m_s$, the only term of degree $k$ in the expansion of \eqref{e2.2} is $A_d^{d^n m_s} B_{d^n m_s}$. Furthermore, by applying the hypothesis to the structure of $NP_p(g \circ f^n)$ and recalling that $p \nmid A_d$, we derive $\nu_p(A_d^{d^n m_s} B_{d^n m_s}) = d^n m_s \nu_p(A_d) + \nu_p(B_{d^n m_s}) = d^n m_s (0) + r_s = r_s$. Combining all of these, we obtain $\nu_p(C_{d^{n+1} m_s}) = r_s$. As $s$ was chosen arbitrarily, the lemma holds for all $0 < s < t$. This completes the proof of the lemma.
\end{proof}

\begin{lemma}\label{lemma2.4}
With the notations and hypotheses of Theorem \ref{main thm1}, denote $\nu_p(b_{m_i})$ by $r_i$ for $0 \leq i \leq t$. Assume that the successive vertices of the Newton polygon of $(g \circ f^n)(x)$ with respect to $p$ are given by the set
\[
\{(0,0), (d^n(e-m_1), r_1), \dots, (d^n(e-m_{t-1}), r_{t-1}), (d^n e, r_t)\}.
\]
If the polynomial $(g\circ f^{n+1})(x)$ is given by $(g\circ f^{n+1})(x) = \sum\limits_{k=0}^{d^{n+1}e} C_k x^k$, then for $k \leq d^{n+1}m_s$ with $0 \leq s < t$, we have
\[
\nu_p(C_k) \geq r_{s+1} + \lambda_{s+1} \left( m_{s+1} - \frac{k}{d^{n+1}} \right).
\]
\end{lemma}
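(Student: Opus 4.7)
The plan is to mimic the expansion used in the proof of Lemma \ref{lemma2.3}. Writing $f(x) = \sum_{i=0}^{d} A_i x^i$ and $(g\circ f^n)(x) = \sum_{j=0}^{d^n e} B_j x^j$, one has
\[
(g\circ f^{n+1})(x) = \sum_{j=0}^{d^n e} B_j (f(x))^j,
\]
so that $C_k$ is a sum, over integers $j \geq k/d$ and compositions $i_1+\cdots+i_j = k$ with $0\leq i_\ell\leq d$, of terms $B_j A_{i_1}\cdots A_{i_j}$. By the ultrametric inequality it is enough to establish the desired lower bound on the $p$-adic valuation of each such term.

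The crucial uniform lower bound on $\nu_p(B_j)$ comes from the convexity of $NP_p(g\circ f^n)$: by hypothesis, the point $V_s := (d^n(e-m_s), r_s)$ is a vertex, and $\lambda_{s+1}/d^n$ is the slope leaving $V_s$ to the right (the maximum supporting slope there). Hence the line through $V_s$ with slope $\lambda_{s+1}/d^n$ lies weakly below $NP_p(g\circ f^n)$ at every abscissa, so that
\[
\nu_p(B_j) \;\geq\; r_s + \tfrac{\lambda_{s+1}}{d^n}(d^n m_s - j) \qquad \text{for every } j.
\]
Combined with the first-slope bound $\nu_p(A_i)\geq \lambda(d-i)$ on the coefficients of $f$ -- which upon summation gives $\sum_\ell \nu_p(A_{i_\ell}) \geq \lambda(dj-k)$ -- and the identity $r_s + \lambda_{s+1}m_s = r_{s+1}+\lambda_{s+1}m_{s+1}$ that follows at once from the definition of $\lambda_{s+1}$, a short algebraic manipulation rewrites the resulting bound on each term as
\[
\nu_p(B_j A_{i_1}\cdots A_{i_j}) \;\geq\; r_{s+1}+\lambda_{s+1}\!\left(m_{s+1}-\tfrac{k}{d^{n+1}}\right) + \tfrac{\lambda d^{n+1}-\lambda_{s+1}}{d^n}\!\left(j-\tfrac{k}{d}\right).
\]

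Because $j\geq k/d$, the last summand is non-negative provided $\lambda d^{n+1}\geq \lambda_{s+1}$, and this is exactly where the hypothesis of Theorem \ref{main thm1} is used. Lemma \ref{lemma2.1} applied with $\alpha=d+e-1$ is legitimate under either (i) or (ii), and yields $r_{s+1}\leq \lambda_1(d+e-1-m_{s+1})$; on the other hand, the line through $(0,0)$ with slope $\lambda_1$ lies below $NP_p(g)$, so $r_s\geq \lambda_1(e-m_s)$. Subtracting and dividing by $m_s - m_{s+1}\geq 1$ gives
\[
\lambda_{s+1} \;=\; \tfrac{r_{s+1}-r_s}{m_s - m_{s+1}} \;\leq\; \lambda_1 + \tfrac{\lambda_1(d-1)}{m_s - m_{s+1}} \;\leq\; \lambda_1 d \;\leq\; \lambda d \;\leq\; \lambda d^{n+1},
\]
so the extra summand above is non-negative. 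Taking the minimum across the contributing terms yields the claimed inequality $\nu_p(C_k) \geq r_{s+1}+\lambda_{s+1}(m_{s+1}-k/d^{n+1})$.

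The main obstacle I anticipate is precisely the final comparison $\lambda d^{n+1}\geq\lambda_{s+1}$; without the bound $\lambda_{s+1}\leq \lambda_1 d$ one would be forced into a piecewise analysis of $j$ governed by the individual edges of $NP_p(g\circ f^n)$. The uniform tangent-line bound at $V_s$, together with the slope estimate supplied by Lemma \ref{lemma2.1}, bypasses all such case distinctions.
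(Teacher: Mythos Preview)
Your argument is correct and appreciably cleaner than the paper's. Both proofs bound each monomial contribution $B_j A_{i_1}\cdots A_{i_j}$ separately, but the paper proceeds by a multi-layer case analysis: Case~1 treats $d^{n+1}m_{s+1}\leq k < d^{n+1}m_s$ and splits further into Subcase~(i) $j\leq k/d$ (using the local bound $\nu_p(B_j)\geq r_{s+1}+\frac{\lambda_{s+1}}{d^n}(d^n m_{s+1}-j)$, which is your supporting-line inequality restricted to that range) and Subcase~(ii) $j>k/d$ (using instead the first-slope bound $\nu_p(B_j)\geq\frac{\lambda_1}{d^n}(d^ne-j)$, then splitting once more according to whether $j-k/d\geq 1$); Case~2 reduces $k<d^{n+1}m_{s+1}$ to Case~1 via Lemma~\ref{lemma2.2}.

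Your key simplification is to apply the single supporting-line bound at $V_s$ with slope $\lambda_{s+1}/d^n$ uniformly for all $j$, and then absorb the $A$-contribution through the inequality $\lambda_{s+1}\leq \lambda_1 d\leq \lambda d^{n+1}$. The paper does establish essentially this slope estimate---as $\lambda_1\geq \lambda_t/d$---but only deep inside the subcase $0<j-k/d<1$; you promote it to the driving lemma and thereby dispense with all the piecewise bookkeeping. The trade-off is minor: the paper's case-by-case treatment records some strict inequalities in intermediate ranges, but these are not needed for the statement of Lemma~\ref{lemma2.4}, so nothing is lost.
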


\begin{proof}
Write $f(x) = \sum_{i=0}^{d} A_i x^i$ and $(g \circ f^n)(x) = \sum_{j=0}^{d^n e} B_j x^j$. Since $(g\circ f^{n+1})(x)$ is given by $(g\circ f^{n+1})(x) = \sum\limits_{k=0}^{d^{n+1}e}C_kx^k$, we obtain
\[
(g\circ f^{n+1})(x) = \sum_{j=0}^{d^n e} B_j \left(\sum_{i=0}^{d} A_i x^i\right)^j =  \sum\limits_{k=0}^{d^{n+1}e}C_kx^k.
\]
To determine $\nu_p(C_k)$ for a given $k$ with $k \leq d^{n+1}m_s$, we examine the $p$-adic valuation of the coefficient of $x^k$ in the expansion of the term:
\begin{equation} \label{e2.6}
B_j \left(\sum_{i=0}^{d} A_i x^i \right)^j
\end{equation}
for each $j$ in the range $0 \leq j \leq d^n e$.

We use arguments similar to those used for obtaining Equations \eqref{e2.3} and \eqref{e2.4} to conclude the following:
\begin{align}
\nu_p(B_j) &\geq \frac{\lambda_{1}}{d^n} (d^n e - j), \quad \forall j \in \{0,1,2,\ldots, d^n e\}; \label{e2.7} \\
\nu_p(A_i) &\geq \lambda (d - i), \quad \forall i \in \{0,1,2,\ldots,d\}. \label{e2.8}
\end{align}

\noindent Now, to prove the lemma, fix an integer $s$ within the range $0 \leq s < t$.

\noindent \textbf{Case 1:} We first prove the result for $k$ in the range $d^{n+1}m_{s+1} \leq k < d^{n+1}m_s$, i.e., $d^n m_{s+1} \leq \frac{k}{d} < d^n m_s$. We split this case into two subcases according to whether $j \leq \frac{k}{d}$ or $j > \frac{k}{d}$.

\noindent \textbf{Subcase (i):} Suppose $j \leq \frac{k}{d} < d^n m_s$, i.e., $d^n e - j > d^n e - d^n m_s = d^n (e - m_s)$. In this situation, the point $(d^n e - j, \nu_p(B_j))$ lies strictly beyond the point $(d^n (e - m_s), r_s)$ as illustrated in Figure \ref{NP of gofn}. Consequently, the slope of the segment joining $(d^n e - j, \nu_p(B_j))$ and $(d^n (e - m_s), r_s)$ is greater than or equal to $\lambda_{s+1}/d^n$, depicted by the blue dotted line in Figure \ref{NP of gofn}. Using this observation for $j<d^nm_s$ and the value of $\lambda_{s+1}$, we deduce
\begin{align*}
    \frac{\nu_p(B_j) - r_s}{d^n m_s - j} &\geq \frac{\lambda_{s+1}}{d^n} = \frac{r_{s+1} - r_s}{d^n (m_s - m_{s+1})}.
\end{align*}
Although the above inequality holds only for $j<d^nm_s$, the next inequlity deduced from above is true for all $j \leq d^nm_s$, 
\begin{align*}
    \nu_p(B_j) - r_s &\geq \frac{r_{s+1} - r_s}{d^n (m_s - m_{s+1})} (d^n m_s - j) \\
    &= r_{s+1} - r_s + \frac{r_{s+1} - r_s}{d^n (m_s - m_{s+1})} (d^n m_{s+1} - j). 
\end{align*}
This implies 
\begin{equation} \label{e2.9}
	\nu_p(B_j) \geq r_{s+1} + \frac{\lambda_{s+1}}{d^n} (d^n m_{s+1} - j)
\end{equation} 
Using this in \eqref{e2.6} and keeping in mind that $\nu_p(A_i) \geq 0$ for all $i \in \{0,1,\ldots,d\}$, we obtain
\[
\nu_p(C_k) \geq \nu_p(B_j) \geq r_{s+1} + \frac{\lambda_{s+1}}{d^n} (d^n m_{s+1} - j).
\]
Further, using $j \leq \frac{k}{d}$, we conclude
\[
\nu_p(C_k) \geq r_{s+1} + \frac{\lambda_{s+1}}{d^n} \left( d^n m_{s+1} - \frac{k}{d} \right) = r_{s+1} + \lambda_{s+1} \left( m_{s+1} - \frac{k}{d^{n+1}} \right).
\]
Therefore, our lemma is proved whenever $j \leq \frac{k}{d}$. \vspace{0.25cm}

\noindent \textbf{Subcase (ii):} Suppose $j > \frac{k}{d}$, and let $k = i_1 + i_2 + \cdots + i_j$ be a partition of $k$ into $j$ terms, where $i_{\ell} \leq d$ for $1 \leq \ell \leq j$. Then there exists a term of degree $k$ in the form $B_j \prod_{\ell=1}^{j} A_{i_{\ell}} x^{i_{\ell}} = \left( B_j \prod_{\ell=1}^{j} A_{i_{\ell}} \right) x^k$ within the expansion of \eqref{e2.6}. We use arguments similar to those used for \eqref{e2.5} to arrive at the following conclusion:
\begin{align*}
    \nu_p(C_k) &\geq \lambda_1 \left[ e - \frac{j}{d^n} + dj - k + \frac{k}{d^{n+1}} - m_{s+1} \right] - \lambda_1 \left( \frac{k}{d^{n+1}} - m_{s+1} \right) \\
    &\geq \lambda_1 \left[ e + \left( d - \frac{1}{d^n} \right) \left(j - \frac{k}{d}\right) - m_{s+1} \right] - \lambda_{s+1} \left( \frac{k}{d^{n+1}} - m_{s+1} \right),
\end{align*}
where the last inequality holds because for any $s > 0$, as $\frac{k}{d^{n+1}} - m_{s+1} \geq 0$ and $\lambda_1 \leq \lambda_{s+1}$. Additionally, if $j - \frac{k}{d} \geq 1$, we use $d \geq 1$ to derive
\[
\nu_p(C_k) \geq \lambda_1 \left[ e + (d - 1) - m_{s+1} \right] + \lambda_{s+1} \left( m_{s+1} - \frac{k}{d^{n+1}} \right).
\]
Using the given fact that $r_t \leq \lambda_1 (d + e - 1)$ in Lemma \ref{lemma2.1} with $\alpha = e + d - 1$, the above inequality becomes
\[
\nu_p(C_k) \geq r_{s+1} + \lambda_{s+1} \left( m_{s+1} - \frac{k}{d^{n+1}} \right).
\]

\noindent Therefore, our lemma holds whenever $j - \frac{k}{d} \geq 1$. The only remaining part in this subcase is when $0 < j - \frac{k}{d} < 1$, which implies $k < dj < d + k$. Thus, there exists an integer $\gamma$ in the range $0 < \gamma < d$ satisfying $k + \gamma = dj$. Further, we use $k = i_1 + i_2 + \ldots + i_j$ to deduce $\left( \sum_{\ell=1}^{j} i_{\ell} \right) + \gamma = dj$. This implies $\gamma = \sum_{\ell=1}^{j} (d - i_{\ell})$. Additionally, considering the condition $j < \frac{k}{d} + 1$ and bearing in mind that $k < d^{n+1} m_s$, we infer that $j < d^n m_s + 1$, i.e., $j \leq d^n m_s$. Now, we use reasoning similar to those used for \eqref{e2.9} to deduce $\nu_p(B_j) \geq r_{s+1} + \frac{\lambda_{s+1}}{d^n} (d^n m_{s+1} - j)$. Using this inequality and \eqref{e2.8}, we obtain
\[
\nu_p(C_k) \geq \nu_p(B_j) + \sum_{\ell=1}^{j} \nu_p(A_{i_{\ell}})
\geq r_{s+1} + \frac{\lambda_{s+1}}{d^n} (d^n m_{s+1} - j) + \lambda \sum_{\ell=1}^{j} (d - i_{\ell}).
\]

	\noindent Substituting \( j = \frac{k}{d} + \frac{\gamma}{d} \) and using \( \gamma = \sum_{\ell=1}^{j} (d-i_{\ell}) \), we have
\begin{align*}
\nu_p(C_k) &\geq r_{s+1} + \frac{\lambda_{s+1}}{d^n} \left( d^n m_{s+1} - \left(\frac{k}{d} + \frac{\gamma}{d}\right) \right) + \lambda \gamma \\
&\geq r_{s+1} + \lambda_{s+1} \left( m_{s+1} - \frac{k}{d^{n+1}} \right) - \frac{\lambda_{s+1}}{d^n} \frac{\gamma}{d} + \lambda_1 \gamma,
\end{align*}
where the last inequality follows from the hypothesis \( \lambda \geq \lambda_1 \). Therefore, to establish our lemma in this subcase, it is enough to show that
\[
\lambda_1 \gamma - \frac{\lambda_{s+1}}{d^n} \frac{\gamma}{d} \geq 0.
\]
Taking into account the conditions \( \gamma > 0 \), \( n \geq 0 \), and \( \lambda_t \geq \lambda_{s+1} \), for all $s$ with $0< s \leq t$, the above inequality holds true if
\[
\lambda_1 \geq \frac{\lambda_{t}}{d} = \frac{1}{d} \frac{r_t - r_{t-1}}{m_{t-1} - m_t}, \text{ i.e., } r_t \leq \lambda_1 d(m_{t-1} - m_t) + r_{t-1}.
\]
Furthermore, noting that \( \frac{r_{t-1}}{e - m_{t-1}} \geq \lambda_1 \) and keeping in mind that \( m_t = 0 \), the inequality above holds true if
\[
r_t \leq \lambda_1 d m_{t-1} + \lambda_1 (e - m_{t-1}), \text{ i.e., } r_t \leq \lambda_1 [e + (d - 1)m_{t-1}].
\]
The last inequality follows from \( m_{t-1} \geq 1 \) and the hypothesis that \( r_t \leq \lambda_{1} (d + e - 1) \). This proves Subcase (ii). Thus, our lemma holds for any \( s \), \( 0 \leq s < t \), whenever \( k \) lies in the range \( d^{n+1} m_{s+1} \leq k < d^{n+1} m_s \). This completes the proof of Case 1. \vspace{0.5cm}

\noindent \textbf{Case 2:} Now suppose that \( k < d^{n+1} m_s \). Note that the sequence \( m_i \)'s are decreasing; hence there exists an integer \( \alpha < t \) such that \( \alpha \geq s \) and \( d^{n+1} m_{\alpha+1} \leq k < d^{n+1} m_{\alpha} \). Using exactly similar arguments to Case 1 for \( \alpha \), we see that
\begin{align}
\nu_p(C_k) &\geq r_{\alpha+1} +  \lambda_{\alpha+1} \left(m_{\alpha+1} - \frac{k}{d^{n+1}}\right) \label{e2.10} \\
&= r_{\alpha} + r_{\alpha+1} - r_{\alpha} + \frac{r_{\alpha+1} - r_{\alpha}}{m_{\alpha} - m_{\alpha+1}} \left(m_{\alpha+1} - \frac{k}{d^{n+1}}\right) \nonumber \\
&= r_{\alpha} + \frac{r_{\alpha+1} - r_{\alpha}}{m_{\alpha} - m_{\alpha+1}} \left(m_{\alpha} - m_{\alpha+1} + m_{\alpha+1} - \frac{k}{d^{n+1}}\right) \nonumber \\
&= r_{\alpha} +  \lambda_{\alpha+1} \left(m_{\alpha} - \frac{k}{d^{n+1}}\right) \nonumber \\
&> r_{\alpha} +  \lambda_{\alpha} \left(m_{\alpha} - \frac{k}{d^{n+1}}\right), \label{e2.11}
\end{align}
where the last inequality holds because \( \lambda_{\alpha+1} > \lambda_{\alpha} \) and \( k < d^{n+1} m_{\alpha} \). If \( \alpha = s \), then the lemma is clearly true using \eqref{e2.10}. If \( \alpha = s + 1 \), we use \eqref{e2.11} to prove our lemma. Now, assuming \( \alpha > s + 1 \) and keeping in mind \( \frac{k}{d^{n+1}} < m_{\alpha} \), we apply Lemma \ref{lemma2.2} (i) to the right-hand side of \eqref{e2.11} to conclude 
\[
\nu_p(C_k) > r_{s+1} +  \lambda_{s+1} \left(m_{s+1} - \frac{k}{d^{n+1}}\right).
\]
This completes the proof of Case 2. Thus, our lemma is proved for \( s \). Since \( s \) was arbitrarily chosen from the range \( 0 \leq s < t \), our result holds for all \( s \), \( 0 \leq s < t \). This completes the proof of the lemma.
\end{proof}

\section{Proof of Theorem \ref{main thm1}} \label{sec3}
\begin{proof}[Proof of Theorem \ref{main thm1}:]
    Observe that $0 = m_t < m_{t-1} < \cdots < m_1 < m_0 = e$. Let $\nu_p(b_{m_i})$ be denoted by $r_i$ for $0 \leq i \leq t$. Hence, $\lambda_i$ can be expressed as 
    \[
    \lambda_i = \frac{r_i - r_{i-1}}{m_{i-1} - m_i} \quad \text{for} \quad 1 \leq i \leq t.
    \]
    We proceed by induction on $n \geq 0$. For $n = 0$, we have $g \circ f^0 = g$, and the result is trivially true.

    Now, fix an integer $n > 0$. Assume that the result holds for $n$, i.e., the successive vertices of $NP_p(g \circ f^n)$ are given by the set
    \[
    \{(0,0), (d^n(e - m_1), r_1), \dots, (d^n(e - m_{t-1}), r_{t-1}), (d^n e, r_t)\}.
    \]
    
    Our goal is to show that $NP_p(g \circ f^{n+1})$, has the structure as depicted in Figure \ref{NP of gofn+1}. Specifically, we need to show that the successive vertices of $NP_p(g \circ f^{n+1})$ are given by the set
    \[
    \{(0,0), (d^{n+1}(e - m_1), r_1), \dots, (d^{n+1}(e - m_{t-1}), r_{t-1}), (d^{n+1} e, r_t)\}.
    \]
    
    \begin{figure}[htbp]
        \centering
        \begin{tikzpicture}[scale=1.2]
            \draw[->, thick] (0,-0.5) -- (0,6);
            \draw[->, thick] (-0.5,0) -- (9.5,0);
            
            \coordinate (A) at (0,0);
            \coordinate (B) at (2,0.15);
            \coordinate (C) at (3.7,0.6);
            \coordinate (D) at (5.2,1.6);
            \coordinate (E) at (6.2,2.8);
            \coordinate (F) at (7,5);
            
            \coordinate (I) at (4.3,2.7);
            
            \draw[thick] (A) -- (B);
            \draw[dotted, thick, black!60] (B) -- (C);
            \draw[thick] (C) -- (D) node[midway, above] {$\frac{\lambda_{s+1}}{d^{n+1}}$};
            \draw[dotted, thick, black!60] (D) -- (E);
            \draw[thick] (E) -- (F) node[midway, right] {$\frac{\lambda_t}{d^{n+1}}$};
            
            \draw[dotted, thick, red] (B) -- ++(7.5,0.6);
            \draw[dotted, thick, green] (A) -- (I);
            \draw[dotted, thick, blue] (C) -- (I);
            
            \foreach \point/\position/\name in {
                A/below left/{(0,0)}, 
                B/below/{$(d^{n+1}(e - m_1), r_1)$}, 
                C/right/{$(d^{n+1}(e - m_{s-1}), r_{s-1})$}, 
                D/right/{$(d^{n+1}(e - m_s), r_s)$}, 
                E/right/{$(d^{n+1}(e - m_{t-1}), r_{t-1})$}, 
                F/above/{$(d^{n+1} e, r_t)$}, 
                I/above/{$(d^{n+1} e - k, \nu_p(C_k))$}
            } {
                \fill (\point) circle (2pt) node[\position] {\name};
            }
        \end{tikzpicture}
        \caption{Newton polygon of $g \circ f^{n+1}$ with respect to $p$} \label{NP of gofn+1}
    \end{figure}
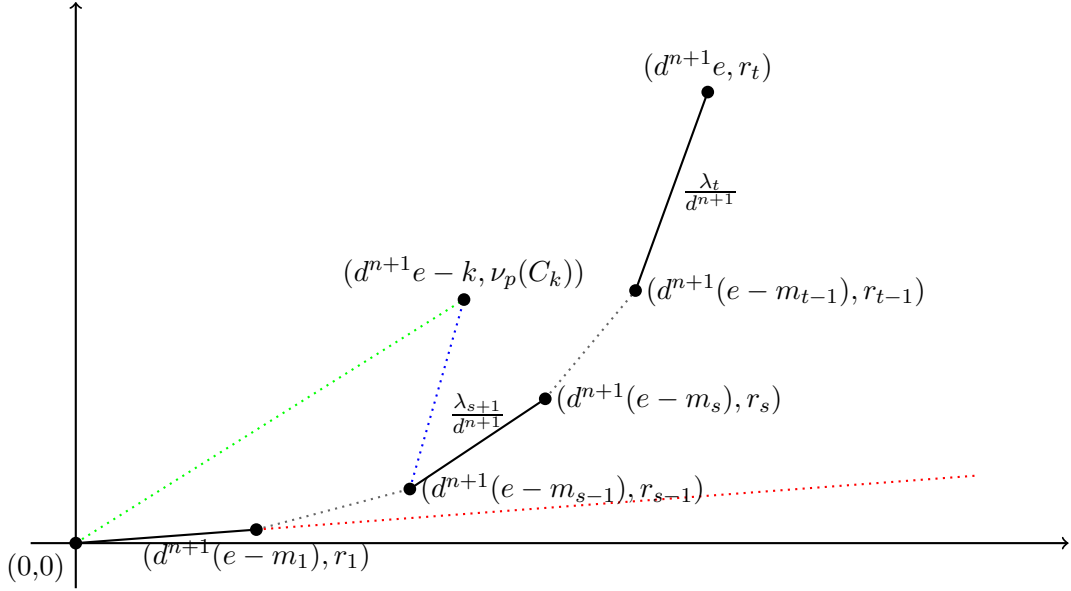

\noindent Write $f(x) = \sum_{i=0}^{d} A_i x^i$ and $(g \circ f^n)(x) = \sum_{j=0}^{d^n e} B_j x^j$. Thus, we have 
\[
(g \circ f^{n+1})(x) = \sum_{j=0}^{d^n e} B_j \left( \sum_{i=0}^{d} A_i x^i \right)^j.
\]
Let $C_k$ denote the coefficient of $x^k$ such that 
\[
(g \circ f^{n+1})(x) = \sum_{k=0}^{d^{n+1}e} C_k x^k.
\]

Using Lemmas \ref{lemma2.3} and \ref{lemma2.4}, we will show that the first edge of $NP_p(g \circ f^{n+1})$ has endpoints $(0,0)$ and $(d^{n+1}(e - m_1), r_1)$. For $s=0$, we have $m_0 = e$. By applying Lemma \ref{lemma2.3} with $s = 0$, we obtain $\nu_p(C_{d^{n+1} e}) = r_0 = 0$, meaning that the first edge of $NP_p(g \circ f^{n+1})$ starts at the point $(0,0)$. 

Next, applying Lemma \ref{lemma2.4} with $s=0$ and $k \leq d^{n+1} m_0 = d^{n+1} e$, we deduce that
\begin{align*}
    \nu_p(C_k) &\geq r_1 + \lambda_1 \left( m_1 - \frac{k}{d^{n+1}} \right) \\
    &= r_0 + r_1 - r_0 + \frac{r_1 - r_0}{e - m_1} \left( m_1 - \frac{k}{d^{n+1}} \right) \\
    &= r_0 + \frac{r_1 - r_0}{e - m_1} \left( e - \frac{k}{d^{n+1}} \right).
\end{align*}

\noindent Thus, we have
\begin{equation}\label{e3.1}
    \frac{\nu_p(C_k) - r_0}{d^{n+1} e - k} \geq \frac{1}{d^{n+1}} \cdot \frac{r_1 - r_0}{e - m_1} = \frac{\lambda_1}{d^{n+1}}.
\end{equation}

Hence, every point $(d^{n+1} e - k, \nu_p(C_k))$ lies on or above the line segment joining $(0,0)$ and $(d^{n+1}(e - m_1), r_1)$. By applying Lemma \ref{lemma2.3} for $s = 1$, we obtain $\nu_p(C_{d^{n+1} m_1}) = r_1$. To show that $(d^{n+1}(e - m_1), \nu_p(C_{d^{n+1} m_1}))$ is the endpoint of the first edge, it suffices to prove that any point $(d^{n+1} e - k, \nu_p(C_k))$ beyond $(d^{n+1}(e - m_1), \nu_p(C_{d^{n+1} m_1}))$ lies strictly above the line segment representing the first edge, as indicated by the green dotted segment in Figure \ref{NP of gofn+1}.

Now, consider a point $(d^{n+1} e - k, \nu_p(C_k))$ lying beyond $(d^{n+1}(e - m_1), r_1)$, i.e., $d^{n+1} e - k > d^{n+1}(e - m_1)$. Then, we have $k < d^{n+1} m_1$. Lemma \ref{lemma2.4} with $s = 1$ gives

\begin{align*}
	\nu_p(C_k) &\geq r_2 + \lambda_2 \left( m_2 - \frac{k}{d^{n+1}} \right) \\
	&= r_1 + r_2 - r_1 + \frac{r_2 - r_1}{m_1 - m_2}\left( m_2 - \frac{k}{d^{n+1}} \right) \\
	&= r_1 + \frac{r_2 - r_1}{m_1 - m_2}\left( m_1 - \frac{k}{d^{n+1}} \right) \\
	&> r_1 + \lambda_1 \left( m_1 - \frac{k}{d^{n+1}} \right).
\end{align*}
where the last inequality follows using the fact that $k < d^{n+1}m_1$ and $\lambda_{1} < \lambda_{2}$. Following similar reasoning as for inequality \eqref{e3.1}, we get
\[
    \frac{\nu_p(C_k) - r_0}{d^{n+1} e - k} > \frac{\lambda_1}{d^{n+1}}.
\]
Thus, every point $(d^{n+1} e - k, \nu_p(C_k))$ lying beyond $(d^{n+1}(e - m_1), r_1)$ lies strictly above the line segment joining $(0,0)$ and $(d^{n+1}(e - m_1), r_1)$. Therefore, we conclude that the first edge of $NP_p(g \circ f^{n+1})$ has endpoints $(0,0)$ and $(d^{n+1}(e - m_1), r_1)$.

For any $s$ in the range $0 < s \leq t$, similar arguments will show that the $s$-th edge of $NP_p(g \circ f^{n+1})$ has endpoints $(d^{n+1}(e - m_{s-1}), r_{s-1})$ and $(d^{n+1}(e - m_s), r_s)$. This completes the proof of the theorem.
\end{proof}

We now present a few examples to illustrate that the assumptions made in Theorem \ref{main thm1} are not only sufficient but also strictly necessary for its conclusions. For a given polynomial \( g(x) \in \mathbb{Q}[x] \) of degree \( e \), we use the notation:
\[
NP_p(g): (0,0) \rightarrow (e-m_1, \nu_p(b_{m_1})) \rightarrow \cdots \rightarrow (e, \nu_p(b_0))
\]
to represent the successive vertices of the Newton polygon of \( g(x) \) with respect to the prime \( p \). These vertices are given by the set 
\[
\{ (0,0), (e-m_1, \nu_p(b_{m_1})), \dots, (e, \nu_p(b_0)) \}.
\]

In what follows, Example \ref{eg1} illustrates the importance of the condition \( \lambda \geq \lambda_1 \), while Examples \ref{eg2} and \ref{eg3} highlights the necessity of the condition \( r_t \leq \lambda_1(d + e - 1) \).

\begin{example} \label{eg1}
    Let \( f(x) = x^3 + 2x + 4 \) and \( g(x) = x^3 + 4x + 2^4 \). Then 
    \[
    (g \circ f)(x) = x^9 + 6x^7 + 12x^6 + 12x^5 + 48x^4 + 60x^3 + 48x^2 + 104x + 96.
    \]
    Furthermore, the Newton polygons of \( f \), \( g \), and \( g \circ f \) with respect to the prime \( 2 \) are:
    \begin{align*}
        NP_2(f) &: (0,0) \rightarrow (2,1) \rightarrow (3,2), \\
        NP_2(g) &: (0,0) \rightarrow (2,2) \rightarrow (3,4), \\
        NP_2(g \circ f) &: (0,0) \rightarrow (6,2) \rightarrow (8,3) \rightarrow (9,5).
    \end{align*}
    Using the notations in Theorem \ref{main thm1}, we observe that \( t=2 \), \( d=e=3 \), \( \lambda = \frac{1}{2} < 1 = \lambda_1 \), and \( r_t = 4 < 5 = \lambda_1(d + e - 1) \). While all other conditions of Theorem \ref{main thm1} are satisfied, the condition \( \lambda \geq \lambda_1 \) is violated, leading to the failure of Newton polygon structure preservation. This underscores the necessity of the condition \( \lambda \geq \lambda_1 \).
\end{example}

Next two examples demonstrate the importance of the condition \( r_t \leq \lambda_1(d + e - 1) \) for cases where \( t=2 \) and \( t=3 \).

\begin{example} \label{eg2}
    Let \( f(x) = x^3 + 2x^2 + 2x + 4 \) and \( g(x) = x^3 + 2x + 8 \). Then 
    \[
    (g \circ f)(x) = x^9 + 6x^8 + 18x^7 + 44x^6 + 84x^5 + 120x^4 + 154x^3 + 148x^2 + 100x + 80.
    \]
    The Newton polygons of \( f \), \( g \), and \( g \circ f \) with respect to the prime \( 2 \) are:
    \begin{align*}
        NP_2(f) &: (0,0) \rightarrow (2,1) \rightarrow (3,2), \\
        NP_2(g) &: (0,0) \rightarrow (2,1) \rightarrow (3,3), \\
        NP_2(g \circ f) &: (0,0) \rightarrow (6,1) \rightarrow (8,2) \rightarrow (9,4).
    \end{align*}
    Using the notations in Theorem \ref{main thm1}, we have \( t=2 \), \( d=e=3 \), \( \lambda=\lambda_{1} = \frac{1}{2} \), and \( r_t = 3 > \frac{5}{2} = \lambda_1(d + e - 1) \). This example illustrates the necessity of condition (i) of Theorem \ref{main thm1}.
\end{example}

\begin{example} \label{eg3}
    Consider the case \( t = 3 \). Let \( f(x) = x^{11} + 2x^4 + 4x + 16 \). The Newton polygons of \( f \) and \( f^2 \) with respect to the prime \( 2 \) are given by:
    \begin{align*}
        NP_2(f) &: (0,0) \rightarrow (7,1) \rightarrow (10,2) \rightarrow (11,4), \\
        NP_2(f^2) &: (0,0) \rightarrow (77,1) \rightarrow (110,2) \rightarrow (117,3) \rightarrow (121,4).
    \end{align*}
    Using the notations of Theorem \ref{main thm1}, we observe that in this example \( t = 3 \), \( d = e = 3 \), \( \lambda = \lambda_1 = \frac{1}{7} \), and \( r_3 = 4 > 3 = \lambda_1(d + e - 1) \). This demonstrates the necessity of the condition \( r_t \leq \lambda_1(d + e - 1) \) in Theorem \ref{main thm1}.
\end{example}

\section{Proof of Theorem \ref{main thm2}} \label{sec4}
\begin{proof}[Proof of Theorem \ref{main thm2}:]
    Denote $\nu_p(b_{m_i})$ by $r_i$ for $0 \leq i \leq t$. Thus, $\lambda_i$ can be written as $\lambda_i = \frac{r_i - r_{i-1}}{m_{i-1} - m_i}$ for $1 \leq i \leq t$. Our goal is to show that the successive vertices of $NP_p(g \circ f)$ are given by the set
    \[
    \{(0, r_0), (d(e - m_1), r_1), \dots, (d(e - m_{t-1}), r_{t-1}), (de, r_t)\}.
    \]
    
    \noindent Write $f(x) = \sum_{i=0}^{d} a_i x^i$. So, we have
    \[
    (g \circ f)(x) = \sum_{j=0}^{e} b_j \left( \sum_{i=0}^{d} a_i x^i \right)^j.
    \]
    Let $c_k$ denote the coefficients of $x^k$ in this expansion, so that
    \[
    (g \circ f)(x) = \sum_{k=0}^{de} c_k x^k.
    \]
    The proof of our theorem will proceed similarly to the proof of Theorem \ref{main thm1}, once we establish the following two key equations:
    
    \begin{equation}\label{(I)}
        \text{For } k = d m_s \text{ with } 0 \leq s \leq t, \text{ we have } \nu_p(c_k) = r_s,
    \end{equation}
    
    \begin{equation}\label{(II)}
        \text{for } k \leq d m_s \text{ with } 0 \leq s \leq t, \text{ we have } \nu_p(c_k) \geq r_{s+1} + \lambda_{s+1} \left( m_{s+1} - \frac{k}{d} \right).
    \end{equation}
    
    To prove Equations \eqref{(I)} and \eqref{(II)}, we need to determine $\nu_p(c_k)$ for a given $k$ within the range $0 \leq k \leq de$. That is, we need the $p$-adic valuation of the coefficient of $x^k$ in the expansion of the term:
    
    \begin{equation}\label{e4.3}
        b_j \left( \sum_{i=0}^{d} a_i x^i \right)^j
    \end{equation}
    for each $j$ in the range $0 \leq j \leq e$. By hypothesis, the successive vertices of $NP_p(g)$ are given by the set
    \[
    \{ (0, r_0), (e - m_1, r_1), \dots, (e - m_{t-1}, r_{t-1}), (e, r_t) \}.
    \]
    Therefore, $NP_p(g)$ is as shown in Figure \ref{NP of g}.
    
    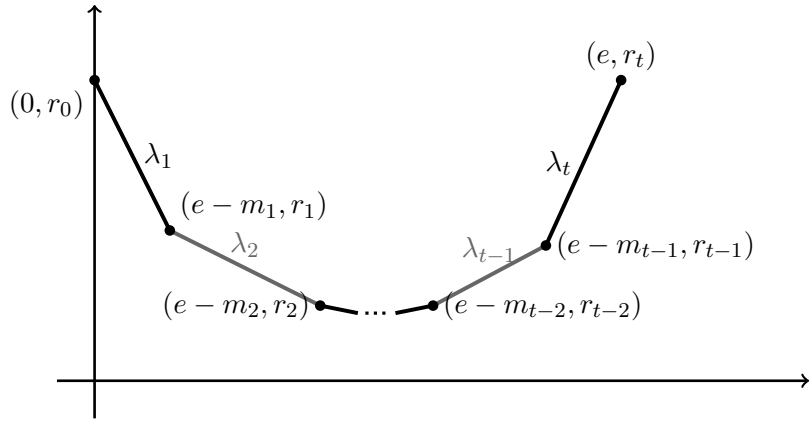
\begin{figure}[h]
        \centering
        \begin{tikzpicture}
            \draw[->, line width=1pt] (0,-0.5) -- (0,5);
            \draw[->, line width=1pt] (-0.5,0) -- (9.5,0);
            
            \coordinate (A) at (0,4);
            \coordinate (B) at (1,2);
            \coordinate (C) at (3,1);
            \coordinate (C1) at (3.5,0.9);
            \coordinate (C2) at (3.6,0.9);
            \coordinate (C3) at (3.9,0.9);
            \coordinate (C4) at (4,0.9);
            \coordinate (D) at (4.5,1);
            \coordinate (E) at (6,1.8);
            \coordinate (F) at (7,4);
            
            \draw[line width=1.5pt] (A) -- (B) node[midway, right] {$\lambda_{1}$};
            \draw[line width=1.5pt, black!60] (B) -- (C) node[midway, above] {$\lambda_{2}$};
            \draw[line width=1.5pt] (C) -- (C1);
            \draw[dotted, line width=1.2pt] (C2) -- (C3);
            \draw[line width=1.5pt] (C4) -- (D);
            \draw[line width=1.5pt, black!60] (D) -- (E) node[midway, above] {$\lambda_{t-1}$};
            \draw[line width=1.5pt] (E) -- (F) node[midway, left] {$\lambda_t$};
            
            \foreach \point/\position/\name in {
                A/below left/{$(0, r_0)$}, 
                B/above right/{$(e - m_1, r_1)$}, 
                C/left/{$(e - m_2, r_2)$}, 
                D/right/{$(e - m_{t-2}, r_{t-2})$}, 
                E/right/{$(e - m_{t-1}, r_{t-1})$}, 
                F/above/{$(e, r_t)$}
            } {
                \fill (\point) circle (2pt) node[\position] {\name};
            }
        \end{tikzpicture}
        \caption{Newton polygon of $g(x)$ with respect to $p$}
        \label{NP of g}
    \end{figure}

	{}
	
	\noindent Recall the hypothesis that $ \nu_p(a_d) = 0 $ and 
\begin{align}
    \nu_p(a_i) \geq \frac{u}{\beta} (d-i) \quad \text{for all } i \in \{0, 1, 2, \ldots, d\}. \label{e4.5}
\end{align}
\vspace{0.5cm}

\noindent We would like to point out that the maximum possible denominator of the slope of any edge of Newton polygon cannot exceed degree of the polynomial, i.e., in our case $\beta \leq d$. We now prove \eqref{(I)}. The proof of \eqref{(I)} is split into two cases depending on whether $s = 0$ or $0 < s \leq t$. \\

\noindent {\bf Case $s = 0$:} For $s = 0$, we have $k = dm_0 = de$. In this situation, the only term of degree $k$ in the expression of $(g \circ f)(x)$ is $b_{e} a_d^e$. Hence, we have:
\[
    \nu_p(c_k) = \nu_p(b_e) + e \nu_p(a_d) = r_0 + e(0) = r_0.
\]
This proves \eqref{(I)} for $s = 0$. \vspace{0.25cm}

\noindent {\bf Case $0 < s \leq t$:} Fix an integer $s$ with $0 < s \leq t$ and let $k = dm_s$. If $j < \frac{k}{d}$, then the highest possible degree of $x$ in the expansion of \eqref{e4.3} is $dj$, which is strictly less than $k$. Therefore, no term of degree $k$ exists in \eqref{e4.3} whenever $j < \frac{k}{d}$.

Now assume $j > \frac{k}{d} = m_s$. Recall that the $m_i$'s form a decreasing sequence, so there exists an integer $\alpha$ in the range $0 < \alpha \leq s$ such that $m_{\alpha} < j \leq m_{\alpha-1}$. In this case, we have $e - j \geq e - m_{\alpha-1}$, i.e., the point $(e - j, \nu_p(b_j))$ lies beyond $(e - m_{\alpha-1}, r_{\alpha-1})$. Thus, we obtain the inequality: 
\begin{align} \label{e4.6}
    \nu_p(b_j) - r_{\alpha-1} &\geq \lambda_{\alpha} (m_{\alpha-1} - j), \nonumber \\
    \text{i.e.,} \quad \nu_p(b_j) &\geq r_{\alpha-1} + \lambda_{\alpha} (m_{\alpha-1} - j).
\end{align}

Suppose that $k$ can be partitioned into $j$ terms as $k = i_1 + i_2 + \cdots + i_j$ with each $i_{\ell} \leq d$ for $1 \leq \ell \leq j$. Then, there exists a term of the form
\[
    b_j \prod_{\ell=1}^{j} a_{i_{\ell}} x^{i_{\ell}} =  \left( b_j \prod_{\ell=1}^{j} a_{i_{\ell}} \right) x^k
\]
of degree $k$ in the expansion of \eqref{e4.3}. Our goal is to show that for each such partition of $k$ with $j > \frac{k}{d}$, the inequality $\nu_p(c_k) > r_s$ holds.
Using \eqref{e4.5} and \eqref{e4.6}, we get:
\[
    \nu_p(c_k) \geq \nu_p\left(b_j \prod_{\ell=1}^{j} a_{i_{\ell}}\right) = \nu_p(b_j) + \sum_{\ell=1}^{j} \nu_p(a_{i_{\ell}}) \geq r_{\alpha-1} + \lambda_{\alpha} (m_{\alpha-1} - j) + \sum_{\ell=1}^{j} \frac{u}{\beta} (d-i_{\ell}).
\]
Furthermore, applying Lemma \ref{lemma2.2} (ii), to prove $\nu_p(c_k) > r_s$, it suffices to show that:
	\begin{align*}
    r_{\alpha-1} + \lambda_{\alpha}(m_{\alpha-1} - j) + \frac{u}{\beta} (dj-k) &> r_s = r_{\alpha-1} + \sum_{i=\alpha}^{s} \lambda_i (m_{i-1} - m_i), \\
    \text{i.e.,} \quad \lambda_{\alpha}(m_{\alpha-1} - j) + \frac{u}{\beta} (dj-dm_{s}) &> \lambda_{\alpha} (m_{\alpha-1} - m_{\alpha}) + \sum_{i=\alpha+1}^{s} \lambda_i (m_{i-1} - m_i), \\
    \text{i.e.,} \quad \frac{u}{\beta} d(j-m_{s}) &> \lambda_{\alpha}(j - m_{\alpha}) + \sum_{i=\alpha+1}^{s} \lambda_i (m_{i-1} - m_i).
\end{align*}

On the contrary suppose that the above inequality does not hold. This means we have:
\[
    \frac{d}{\beta} (j-m_{s}) \leq \frac{\lambda_{\alpha}}{u}(j - m_{\alpha}) + \sum_{i=\alpha+1}^{s} \frac{\lambda_i}{u}(m_{i-1} - m_i) < j - m_{\alpha} + (m_{\alpha} - m_s),
\]
where the last inequality follows from the fact that $\lambda_i \leq |\lambda_i| < u$ for all $i$ in the range $0 < i \leq t$, along with $j > m_{\alpha}$ and the fact that the $m_i$'s form a decreasing sequence. 
Since $j-m_s>0$, the above inequality is equivalent to $\frac{d}{\beta}<1$, which is a contradiction to the fact that $\beta \leq d$. Hence, the desired inequality holds, implying $\nu_p(c_k) > r_s$ whenever $j > \frac{k}{d}$. 

The only remaining case is $j = m_s$, for which there is only one term of degree $k$ in \eqref{e4.3}, namely $b_j a_d^j$. Its $p$-adic valuation is:
\[
    \nu_p(b_j a_d^j) = \nu_p(b_{m_s}) + j \nu_p(a_d) = r_s.
\]
Combining all of these results, we obtain $\nu_p(c_{dm_s}) = r_s$. Since $s$ was chosen arbitrarily from the range $0 < s \leq t$, this completes the proof of \eqref{(I)} for all $0 < s \leq t$. \vspace{0.25cm}

\noindent We now proceed to prove \eqref{(II)}. Fix an integer $s$ in the range $0 \leq s \leq t$.

\noindent \textbf{Case 1:} We first establish \eqref{(II)} for $k$ such that $dm_{s+1} < k \leq dm_s$. 

Note that if $j < \frac{k}{d}$, the highest possible degree of $x$ in the expansion of \eqref{e4.3} is $dj$, which is strictly smaller than $k$. Hence, no term of degree $k$ exists in the expansion of \eqref{e4.3} whenever $j < \frac{k}{d}$. 

Now, assume $j \geq \frac{k}{d} > m_{s+1}$. Recall that the $m_i$'s form a decreasing sequence, so there exists an integer $\alpha$ in the range $0 < \alpha \leq s+1$ such that $m_{\alpha} < j \leq m_{\alpha-1}$. Suppose that $k$ can be partitioned into $j$ terms as $k = i_1 + i_2 + \cdots + i_j$, where each $i_{\ell} \leq d$ for $1 \leq \ell \leq j$. Then, there exists a term in the form:
\[
    b_j \prod_{\ell=1}^{j} a_{i_{\ell}} x^{i_{\ell}} = \left( b_j \prod_{\ell=1}^{j} a_{i_{\ell}} \right) x^k
\]
of degree $k$ in the expansion of \eqref{e4.3}. Now, we will show that for each such partition of $k$ with $j \geq \frac{k}{d}$, the inequality \eqref{(II)} holds. Using \eqref{e4.5} and arguments similar to those used for \eqref{e4.6}, we obtain:
\[
    \nu_p(c_k) \geq \nu_p\left( b_j \prod_{\ell=1}^{j} a_{i_{\ell}} \right) = \nu_p(b_j) + \sum_{\ell=1}^{j} \nu_p(a_{i_{\ell}}) \geq r_{\alpha-1} + \lambda_{\alpha}(m_{\alpha-1} - j) + \sum_{\ell=1}^{j} \frac{u}{\beta} (d-i_{\ell}).
\]
Recall that $\alpha - 1 < \alpha \leq s+1$. By applying Lemma \ref{lemma2.2} (ii) to $r_{s+1}$, we see that in order to prove \eqref{(II)}, it suffices to show that
\begin{align}
    r_{\alpha-1} + \lambda_{\alpha}(m_{\alpha-1} - j) + \frac{u}{\beta} (dj-k) &\geq r_{\alpha-1} + \sum_{i=\alpha}^{s+1} \lambda_i (m_{i-1} - m_i) + \lambda_{s+1} \left(m_{s+1} - \frac{k}{d}\right), \nonumber \\
    \text{i.e.,} \quad \frac{u}{\beta} d\left(j-\frac{k}{d}\right) &\geq \lambda_{\alpha}(j - m_{\alpha}) + \sum_{i=\alpha+1}^{s+1} \lambda_i (m_{i-1} - m_i) + \lambda_{s+1} \left(m_{s+1} - \frac{k}{d}\right). \label{e4.7}
\end{align}

\noindent In the case where $\alpha = s+1$, inequality \eqref{e4.7} becomes:
\[
    \frac{u}{\beta} d\left(j-\frac{k}{d}\right) \geq \lambda_{s+1}(j - m_{s+1}) + \lambda_{s+1} \left(m_{s+1} - \frac{k}{d}\right) = \lambda_{s+1} \left(j - \frac{k}{d}\right).
\]
The above inequlity trivially holds for $j=\frac{k}{d}$. Now for $j>\frac{k}{d}$, if the above inequality does not hold, then we use the fact $\lambda_{s+1} \leq |\lambda_{s+1}| < u$ to conclude $\frac{d}{\beta} < 1$, which contradicts $\beta \leq d$. Therefore, inequality \eqref{e4.7} holds for $\alpha = s+1$.

Now suppose $\alpha < s+1$. In this case, inequality \eqref{e4.7} can be rewritten as:
\[
    \frac{u}{\beta} d\left( j-\frac{k}{d} \right) \geq \lambda_{\alpha}(j - m_{\alpha}) + \sum_{i=\alpha+1}^{s} \lambda_i (m_{i-1} - m_i) + \lambda_{s+1} \left(m_{s} - \frac{k}{d}\right).
\]
Assume, for the sake of contradiction, that the last inequality is false. That is, we have:
\begin{align*}
    \frac{d}{\beta} \left(j-\frac{k}{d}\right) & < \frac{\lambda_{\alpha}}{u}(j - m_{\alpha}) + \sum_{i=\alpha+1}^{s} \frac{\lambda_i}{u}(m_{i-1} - m_i) + \frac{\lambda_{s+1}}{u}\left(m_{s} - \frac{k}{d}\right) \\
    &< j - m_{\alpha} + m_{\alpha} - m_s + m_s - \frac{k}{d} = j - \frac{k}{d},
\end{align*}
where the last inequality follows from $\lambda_i \leq |\lambda_i| < u$ for all $i$ in the range $0 < i \leq t$, $j > m_{\alpha}$, $m_s \geq \frac{k}{d}$, and the fact that the $m_i$'s are decreasing. For $j=\frac{k}{d}$, the above inequality in contradictory. So assume $j>\frac{k}{d}$. Since $j-\frac{k}{d} > 0$, the above inequality can be rewritten as $\frac{d}{\beta} <1$, which contradicts $\beta \leq d$. Therefore, \eqref{(II)} holds for $k$ in the range $dm_{s+1} < k \leq dm_s$.

\vspace{.25cm}

\noindent \textbf{Case 2:}
Now consider the case where $k \leq dm_{s+1}$, i.e., $\frac{k}{d} \leq m_{s+1}$. Recall that the $m_i$'s form a decreasing sequence. Therefore, there exists an integer $\alpha$ in the range $s+1 < \alpha \leq t$ such that $m_{\alpha} < \frac{k}{d} \leq m_{\alpha-1}$, i.e., $dm_{\alpha} < k \leq dm_{\alpha-1}$. Using arguments similar to those in Case 1, we obtain:
\begin{align} \label{e4.8}
    \nu_p(c_k) &\geq r_{\alpha} + \lambda_{\alpha}\left(m_{\alpha} - \frac{k}{d}\right) \nonumber \\
    &> r_{\alpha-1} + \lambda_{\alpha-1} \left( m_{\alpha-1} - \frac{k}{d} \right)
\end{align}
where the last inequality can be obtained using the arguments similar to those used to obtain \eqref{e2.11} from \eqref{e2.10}. If $\alpha = s+2$, then \eqref{(II)} holds true from \eqref{e4.8}. So assume $\alpha > s+2$, we apply Lemma \ref{lemma2.2} (i) and use \eqref{e4.8} to deduce that:
\[
    \nu_p(c_k) > r_{s+1} + \lambda_{s+1} \left( m_{s+1} - \frac{k}{d} \right)
\]
for all $dm_{\alpha} < k \leq dm_{\alpha-1}$. This completes the proof of \textbf{Case 2}, and hence, the inequality \eqref{(II)} holds true for all $k \leq dm_s$.

Since $s$ was chosen arbitrarily from the range $0 \leq s < t$, inequality \eqref{(II)} holds for all $s$, $0 \leq s < t$. This concludes the proof of inequality \eqref{(II)}, and consequently, the proof of Theorem \ref{main thm2}.
\end{proof}

We now provide an example to illustrate that the assumption \( u > \lambda_t \) in Theorem \ref{main thm2} is crucial for preserving the structure of the Newton polygon of \( g \) under composition by \( f \). Furthermore, this example demonstrates that alternative conditions, such as \( u > \lambda_1 \) or \( u > \frac{r_t}{e} \), which generalize \cite[Theorem 3.7]{darwish sadek}, are insufficient to maintain the Newton polygon structure of \( g \).

\begin{example} \label{eg4}
    Consider the polynomials \( f(x) = x^5 + 4x + 4 \) and \( g(x) = x^3 + 4x + 16 \). The composition \( (g \circ f)(x) \) is given by:
    \[
        (g \circ f)(x) = x^{15} + 12x^{11} + 12x^{10} + 48x^7 + 96x^6 + 52x^5 + 64x^3 + 192x^2 + 208x + 96.
    \]
    The Newton polygons of \( g \) and \( g \circ f \) with respect to the prime \( 2 \) are:
    \begin{align*}
        NP_2(g) &: (0,0) \rightarrow (2,2) \rightarrow (3,4), \\
        NP_2(g \circ f) &: (0,0) \rightarrow (10,2) \rightarrow (14,4) \rightarrow (15,5).
    \end{align*}
    Using the notations of Theorem \ref{main thm2}, we find \( u = 2 \) and \( \beta = 5 \). Observe that \( u = 2 \) satisfies both \( u > \lambda_1 = 1 \) and \( u > \frac{r_t}{e} = \frac{4}{3} \). However, the Newton polygon structure of \( g \) is not preserved under the composition by \( f \), highlighting the necessity of the condition \( u > \lambda_t \) in Theorem \ref{main thm2}.
\end{example}

\section{Proof of Theorem \ref{p5.2}} \label{sec5}

To prove Theorem \ref{p5.2}, we first establish a technical lemma that will be instrumental in the proof.

\begin{lemma} \label{l5.1}
Let \( f(x) = a_d x^d + a_{d-1} x^{d-1} + \cdots + a_1 x + a_0 \), where \( a_0 \neq 0 \), be a polynomial of degree \( d \) with rational coefficients. Let \( p \) be a prime, and let \( g(x) = b_e x^e + b_{e-1} x^{e-1} + \cdots + b_1 x + b_0 \) be a polynomial of degree \( e \) with rational coefficients. Suppose that 
\[
\nu_p(a_0) > \max_{1 \leq j \leq e} \frac{\nu_p(b_0) - \nu_p(b_j)}{j}.
\]
Then, \( \nu_p(g(f(0))) = \nu_p(b_0) \).
\end{lemma}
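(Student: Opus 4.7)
The plan is to compute $g(f(0))$ directly and then compare $p$-adic valuations term by term. Since $f(0) = a_0$, we have $g(f(0)) = g(a_0) = b_0 + \sum_{j=1}^{e} b_j a_0^j$, so the goal reduces to showing that the term $b_0$ strictly dominates, in the $p$-adic sense, every other summand.

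For each $j$ with $1 \leq j \leq e$, the basic multiplicativity of $\nu_p$ gives
\[
\nu_p(b_j a_0^j) = \nu_p(b_j) + j\,\nu_p(a_0).
\]
The hypothesis $\nu_p(a_0) > \dfrac{\nu_p(b_0) - \nu_p(b_j)}{j}$ rearranges to $j\,\nu_p(a_0) + \nu_p(b_j) > \nu_p(b_0)$, i.e.\ $\nu_p(b_j a_0^j) > \nu_p(b_0)$ for every $j \geq 1$.

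Since the $p$-adic valuation of a sum is at least the minimum of the valuations of the summands, with equality whenever that minimum is attained by a unique summand, and since here $\nu_p(b_0)$ is strictly less than $\nu_p(b_j a_0^j)$ for all $j \geq 1$, we conclude that
\[
\nu_p(g(f(0))) = \nu_p\!\left(b_0 + \sum_{j=1}^{e} b_j a_0^j\right) = \nu_p(b_0),
\]
as desired. There is no real obstacle here; the lemma is essentially an ultrametric triangle inequality argument, and the hypothesis is tailored precisely so the constant term $b_0$ wins the valuation comparison against every higher-order term $b_j a_0^j$.
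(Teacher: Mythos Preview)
Your proof is correct and follows essentially the same approach as the paper: both arguments rearrange the hypothesis to get $\nu_p(b_j a_0^j) > \nu_p(b_0)$ for all $j\geq 1$ and then invoke the ultrametric inequality to conclude that the $b_0$ term determines the valuation of $g(f(0))=\sum_{j=0}^e b_j a_0^j$.
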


\begin{proof}
From the given condition, for all \( j \) with \( 1 \leq j \leq e \), we have
\[
\nu_p(a_0) > \frac{\nu_p(b_0) - \nu_p(b_j)}{j},
\]
which implies
\[
\nu_p(b_j) + j \nu_p(a_0) > \nu_p(b_0), \quad \forall j, \ 1 \leq j \leq e.
\]
Now consider the evaluation of \( g(f(0)) \):
\[
g(f(0)) = \sum_{j=0}^e b_j a_0^j.
\]
For \( j \geq 1 \), the inequality \( \nu_p(b_j) + j \nu_p(a_0) > \nu_p(b_0) \) ensures that all other terms \( b_j a_0^j \) have strictly higher \( p \)-adic valuation than \( b_0 \). Consequently, \( \nu_p(g(f(0))) = \nu_p(b_0) \).
\end{proof}

\begin{proof}[Proof of Theorem \ref{p5.2}:]
We first establish the following assertion for all natural numbers \( n \):
\begin{align} \label{e5.1}
    f^n(x) \equiv a_d^{\frac{d^n-1}{d-1}} x^{d^n} \pmod{p} \quad \text{and} \quad \nu_p(f^n(0)) = \nu_p(a_0).
\end{align}

The assertion clearly holds for \( n=1 \) by the given conditions. Assume that the result holds for \( n = k \), i.e.,
\[
f^k(x) \equiv a_d^{\frac{d^k-1}{d-1}} x^{d^k} \pmod{p} \quad \text{and} \quad \nu_p(f^k(0)) = \nu_p(a_0).
\]
This implies
\begin{align*}
    f^{k+1}(x) &\equiv a_d^{\frac{d^k-1}{d-1}} \big(a_d x^d + a_{d-1} x^{d-1} + \cdots + a_1 x + a_0\big)^{d^k} \pmod{p} \\
    &\equiv a_d^{\frac{d^k-1}{d-1} + d^k} x^{d^{k+1}} \equiv a_d^{\frac{d^{k+1}-1}{d-1}} x^{d^{k+1}} \pmod{p},
\end{align*}
where the final congruence follows since \( \nu_p(a_i) > 0 \) for all \( i \in \{0, 1, \ldots, d-1\} \).

Next, let \( f^k(x) = \sum_{j=0}^{d^k} b_j x^j \). By the induction hypothesis on $f^k(x)$, we have \( \nu_p(b_j) \geq 1 \) for all \( j \), \( 1 \leq j \leq d^k \). Consequently, we have:
\[
\max_{1 \leq j \leq d^k} \frac{\nu_p(b_0) - \nu_p(b_j)}{j} \leq \frac{\nu_p(b_0) - 1}{j} < \nu_p(b_0) = \nu_p(f^k(0)) = \nu_p(a_0),
\]
where the final equality follows from the induction hypothesis. By applying Lemma \ref{l5.1}, it follows that
\[
\nu_p(f^{k+1}(0)) = \nu_p(b_0) = \nu_p(f^k(0)) = \nu_p(a_0).
\]
Thus, \eqref{e5.1} holds for all natural numbers \( n \).

\medskip

Since \( \nu_p(a_d) = 0 \), we have \( \nu_p(a_d^{\frac{d^n-1}{d-1}}) = 0 \), implying that the \( NP_p(f^n) \) starts at \( (0,0) \). Furthermore, \eqref{e5.1} shows that \( NP_p(f^n) \) has an endpoint at \( (d^n, \nu_p(a_0)) \), and all edges of \( NP_p(f^n) \) must have positive slopes. A chain of edges with positive increasing slopes from \( (0,0) \) to \( (d^n, \nu_p(a_0)) \) can have at most \( \nu_p(a_0) \) edges. By applying Dumas' theorem to \( NP_p(f^n) \), it follows that \( f^n(x) \) can have at most \( \nu_p(a_0) \) irreducible factors.
Therefore, \( f^n(x) \) is eventually stable.
\end{proof}

\section{Some Applications}

In this section, we give some applications of our main results, i.e., Theorems \ref{main thm1}, \ref{main thm2} and \ref{p5.2}. We start this section by proving Theorem \ref{prop1.6} which is about the families of polynomials that are dynamically irreducible at Schur polynomials.

\subsection{Proof of Theorem \ref{prop1.6}:} \label{sec6.1}

Choose a prime $p \mid m$ and fix $n \geq 0$. We show that $G_m \circ f^n$ is irreducible over $\Q$. Write $m = \sum_{i=1}^{N} b_i p^{m_i}$ with $m_1 < m_2 < \cdots < m_N$ and $0 < b_i < p$. Denote $z_j = b_1 p^{m_1} + \cdots + b_j p^{m_j}$ for $1 \leq j \leq N$. Using a method similar to that in \cite[Lemma II]{coleman} and keeping in mind that $\gcd(b_i, m) = 1$ for all $i$, $0 \leq i \leq m$, we have that $NP_p(G_m)$ has $N$ (as shown in Figure \ref{NP of Gm}) segments with slopes
\[
\lambda_i = \frac{p^{m_i} - 1}{p^{m_i}(p - 1)} < 1.
\]

\begin{figure}[htb!]
    \centering
    \begin{tikzpicture}
        \draw[<-, line width=1pt] (0,-0.5) -- (0,6);
        \draw[->, line width=1pt] (-0.5,5) -- (9.5,5);
        
        \coordinate (A) at (0,0);
        \coordinate (B) at (2,.15);
        \coordinate (C) at (3.7,0.6);
        \coordinate (D) at (5.2,1.6);
        \coordinate (E) at (6.2,2.8);
        \coordinate (F) at (7,5);
        
        \draw[line width=1.5pt] (A) -- (B) node[midway, above] {$\lambda_{1}$};
        \draw[dotted, line width=1.2pt, black!60] (B) -- (C);
        \draw[line width=1.5pt] (C) -- (D) node[midway, above] {$\lambda_{i}$};
        \draw[dotted, line width=1.2pt, black!60] (D) -- (E);
        \draw[line width=1.5pt] (E) -- (F) node[midway, right] {$\lambda_N$};
        
        \foreach \point/\position/\name in {A/below left/{$(0,-\nu_p(m!))$}, B/below/{$(z_1,-\nu_p((m-z_1)!))$}, C/right/{$(z_{i-1},-\nu_p((m-z_{i-1})!))$}, D/right/{$(z_i,-\nu_p((m-z_i)!))$}, E/left/{$(z_{N-1},-\nu_p((m-z_{N-1})!))$}, F/above/{$(m, 0)$}} {
            \fill (\point) circle (2pt) node [\position] {\name};
        }
    \end{tikzpicture}
    \caption{Newton polygon of $G_m$ with respect to $p$}
    \label{NP of Gm}
\end{figure}
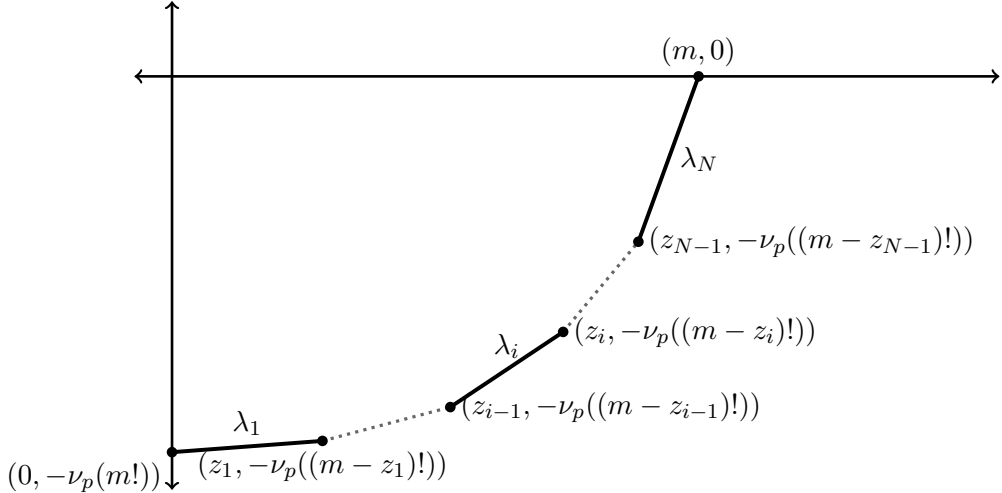
From the proof of Theorem \ref{p5.2}, we have $f^n(x) \equiv a_d^{\frac{d^n - 1}{d - 1}} x^{d^n} \pmod{p}$. Thus, by Theorem \ref{main thm2}, $NP_p(G_m \circ f^n)$ has $N$ segments with slopes
\[
\frac{p^{m_i} - 1}{d^n p^{m_i} (p - 1)}.
\]

In particular, $d^n p^{\nu_p(m)} = d^n p^{m_1}$ divides the denominator of each slope of $NP_p(G_m \circ f^n)$. Therefore, by \cite[Corollary I]{coleman}, $d^n p^{\nu_p(m)}$ divides the degree of any irreducible factor of $G_m \circ f^n$ over $\mathbb{Q}$. It follows that any irreducible factor of $G_m \circ f^n$ has degree $d^n \prod_{p\mid m} p^{\nu_p(m)} = d^n m$, which proves the result. \qed

\noindent \textbf{Remark:} To remove the restriction $\gcd(b_i, m) = 1$ for all $i$, $0 < i < m$, we can use a method similar to that in the proof of \cite[Theorem 2]{filaseta bessel}.

\subsection{Factorization of prime over a tower of number fields and it's ramification index:} \label{sec6.2-1}

For a rational prime $p$, let $\mathbb{F}_p$ denote the finite field with $p$ elements, and let $\mathbb{Z}_p$ denote the ring of $p$-adic integers. The following weaker version of the theorem of the product, originally due to Ore, relates the $\phi$-Newton polygon of a polynomial $f(x) \in \Z_p[x]$ with its factorization over $\Z_p[x]$ (\textit{cf.} \cite[Theorem 1.5]{CMS2000} for the proof).
\begin{theorem} \label{thm1.11-NP}
	Let $f(x),\phi(x) \in \Z_p[x]$ be a monic polynomials such that $\phi(x)$ is irreducible modulo a rational prime $p$. Further assume, $f(x)$ is not divisible by $\phi(x)$ but $\overline{f}(x)$ is a power of $\overline{\phi}(x)$. Suppose that the $\phi$-Newton polygon of $f(x)$ with respect to $p$ has $t$ edges $S_1,\ldots, S_t$ having slopes $\lambda_{1}< \cdots < \lambda_{t}$. Then $f(x) = f_1(x) \cdots f_t(x)$, where each $f_i(x) \in \Z_p[x]$ is a monic polynomials of degree $\ell_i \deg (\phi(x))$ and whose $\phi$-Newton polygon has a single edge, say $S_i'$, which is the translate of $S_i$ such that $\ell_i$ is the length of the horizontal projection of $S_i$.
\end{theorem}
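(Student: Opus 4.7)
The plan is to reduce the theorem to two classical ingredients of Ore: (i) a $\phi$-adic analogue of Dumas' product theorem (Theorem \ref{dumas thm}) and (ii) a one-slope lemma asserting that a monic irreducible polynomial in $\Z_p[x]$ whose reduction is a power of $\overline{\phi}$ has a $\phi$-Newton polygon consisting of a single edge. With these two tools in hand, the decomposition $f = f_1 \cdots f_t$ follows by grouping the monic irreducible $\Z_p[x]$-factors of $f$ according to that unique slope.

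First I would factor $f(x) = \prod_j g_j(x)$ into monic irreducibles in $\Z_p[x]$, which is permissible because $f$ is monic and $\mathbb{Q}_p[x]$ is a UFD. Since $\overline{\phi}$ is irreducible in $\mathbb{F}_p[x]$ and $\overline{f}$ is a power of $\overline{\phi}$, each reduction $\overline{g_j}$ is itself a power of $\overline{\phi}$, so every $g_j$ admits a well-defined $\phi$-Newton polygon. To prove the one-slope lemma, I would fix a root $\alpha \in \overline{\mathbb{Q}_p}$ of $g_j$, use that $\nu_p$ extends uniquely to $\mathbb{Q}_p(\alpha)$, and observe that all Galois conjugates of $\alpha$ share a common value of $\nu_p(\phi(\alpha))$. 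Relating the coefficients of the $\phi$-adic expansion of $g_j$ to elementary symmetric functions in the $\phi(\sigma(\alpha))$ then forces the lower boundary of the $\phi$-Newton polygon of $g_j$ to lie along a single line, ruling out multiple edges.

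Next I would establish the $\phi$-adic Dumas theorem: for two polynomials $h_1, h_2 \in \Z_p[x]$ each with reduction a power of $\overline{\phi}$, the $\phi$-Newton polygon of the product $h_1 h_2$ is obtained by concatenating the $\phi$-Newton polygons of $h_1$ and $h_2$ in order of increasing slope. This is Dumas' combinatorial argument transposed to the $\phi$-adic expansion, the essential point being that the ``residual polynomial'' attached to the lowest point on an edge lives in the residue field $\mathbb{F}_p[x]/(\overline{\phi})$, which (being a field) prevents cancellation of leading terms under multiplication. Grouping the $g_j$ by their unique slope $\mu_{g_j}$ and setting $f_i(x) = \prod_{\mu_{g_j} = \lambda_i} g_j(x)$ then yields $f = f_1 \cdots f_t$, and the product formula confirms that the $\phi$-Newton polygon of $f_i$ is a single edge which is a translate of $S_i$ with horizontal projection $\ell_i$ and degree $\ell_i \deg(\phi)$. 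The principal technical obstacle is the clean formulation and proof of the $\phi$-adic Dumas theorem: one must simultaneously track the $p$-adic and $\phi$-adic filtrations on $\Z_p[x]$ and verify non-cancellation of residual polynomials in $\mathbb{F}_p[x]/(\overline{\phi})$; once this is in place, the grouping step is purely combinatorial.
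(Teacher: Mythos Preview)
The paper does not supply its own proof of this theorem; it merely quotes the statement and refers the reader to \cite[Theorem 1.5]{CMS2000} for the argument. So there is no ``paper's proof'' to compare against.

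Your outline is the classical Ore argument and is essentially what one finds in the cited source. The two ingredients you isolate are exactly the right ones: the one-slope lemma for monic irreducibles whose reduction is a $\overline{\phi}$-power, and the $\phi$-adic product formula (what Guardia--Montes--Nart call the \emph{theorem of the product}). Once both are available, the grouping-by-slope step is indeed purely combinatorial. Two small cautions on the details: in your sketch of the one-slope lemma, the passage from ``all conjugates of $\alpha$ share the same $\nu_p(\phi(\alpha))$'' to ``the $\phi$-Newton polygon is one segment'' is not immediate, because the coefficients of the $\phi$-adic expansion of $g_j$ are not literally symmetric functions of the $\phi(\sigma(\alpha))$; the cleanest route is to work in the completion $\mathbb{Q}_p(\alpha)$, identify $\nu_p(\phi(\alpha))$ with the slope, and then invoke the product formula in reverse. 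For the $\phi$-adic Dumas theorem itself, the non-cancellation of residual polynomials in the field $\mathbb{F}_p[x]/(\overline{\phi})$ is, as you say, the heart of the matter, and needs the hypothesis that $\overline{\phi}$ is irreducible; this is where the argument genuinely goes beyond the $\phi=x$ case.
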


To understand the link between the prime ideals lying above $p$ and the Newton polygon, we need the following definition of residual polynomial.

\begin{defn} \label{residual polynomial}
	Let $\phi(x) \in \mathbb{Z}_p[x]$ be a monic polynomial which is irreducible modulo a rational prime $p$ having a root $\alpha \in \overline{\mathbb{Q}}_p$. Let $f(x) \in \mathbb{Z}_p[x]$ be a monic polynomial not divisible by $\phi(x)$ whose $\phi(x)$-expansion is given by $\phi(x)^n + a_{n-1}(x)\phi(x)^{n-1} + \cdots + a_0(x)$ and such that $\overline{f}(x)$ is a power of $\overline{\phi}(x)$. 
	
	Suppose that the $\phi$-Newton polygon of $f(x)$ with respect to $p$ consists of a single edge, say $S$, having positive slope $l/e$ with $l, e$ coprime, that is,  
	\[
	\min \left\{ \frac{\nu_{p,x}(a_{n-i}(x))}{i} \ \middle| \ 1 \leq i \leq n \right\} = \frac{\nu_{p,x}(a_0(x))}{n} = \frac{l}{e},
	\]
	so that $n$ is divisible by $e$, say $n = et$, and $\nu_{p,x}(a_{n-ej}(x)) \geq lj$ with $1 \leq j \leq t$. Thus, the polynomial $b_j(x) := a_{n-ej}(x)/p^{lj}$ has coefficients in $\mathbb{Z}_p$ and $b_j(\alpha) \in \mathbb{Z}_p[\alpha]$ for $1 \leq j \leq t$. The polynomial $T(Y)$ in the indeterminate $Y$ defined by \( T(Y) = Y^t + \sum_{j=1}^{t} b_j(\alpha) Y^{t-j} \) with coefficients in $\mathbb{F}_p[\alpha] \cong \mathbb{F}_p[x]/\langle \phi(x) \rangle$ is called the residual polynomial of $f(x)$ with respect to $(\phi, S)$.
	
	Suppose that  $\phi$-Newton polygon of $f(x)$ has multiple edges say $S_1, \ldots, S_t$ having slopes $\lambda_{1}< \cdots < \lambda_{t}$. Applying Theorem \ref{thm1.11-NP}, we can write $f(x) = f_1(x) \cdots f_t(x)$, where the $\phi$-Newton polygon of $f_i(x) \in \Z_p[x]$ has a single edge say $S_i'$, which is a translate of $S_i$. The residual polynomial of $f_i(x)$ with respect to $(\phi, S_i')$ is referred to as the residual polynomial of $f(x)$ with respect to $(\phi,S_i)$. Furthermore, the polynomial $f(x)$ is said to be $p$-regular with respect to $\phi$ if none of the polynomials $T_i(Y)$ has a repeated root in the algebraic closure of of $\mathbb{F}_p$, $1 \leq i \leq t$.
\end{defn}

\begin{defn}
	Suppose $f(x) \in \Z_p[x]$ is a monic polynomial and $\overline{f}(x) = \overline{\phi_1}(x)^{e_1} \cdots \overline{\phi_r}(x)^{e_r}$ is its factorization modulo $p$ into irreducible polynomials with each $\phi_i(x) \in \Z_p[x]$ monic and $e_i>0$, then by Hensel's lemma \cite[Ch.4, Section 3]{ShBo66}, there exists monic polynomials $f_1(x), \ldots, f_r(x)$ in $\Z_p[x]$ such that $f(x) = f_1(x) \cdots f_r(x)$ and $\overline{f_i}(x) = \overline{\phi_i}(x)^{e_i}$ for each $i$. The polynomial $f(x)$ is said to be $p$-regular (with respect ito $\phi_1, \ldots, \phi_r$) if each $f_i(x)$ is $p$-regular with repect to $\phi_i$. 
\end{defn}
\noindent To determine the number of distinct prime ideals of $\mathbb{Z}_K$ lying above a rational prime $p$, we will use the following theorem which is a weaker version of \cite[Theorem 1.2]{KK12}.

\begin{theorem} \label{prime factorization} 
	Let $K = \mathbb{Q}(\theta)$ be a number field with $\theta$ satisfying an irreducible polynomial $f(x) \in \mathbb{Z}[x]$ and $p$ be a rational prime. Let $\phi_1(x)^{e_1} \cdots \phi_r(x)^{e_r}$ be the factorisation of $f(x)$ modulo $p$ into powers of distinct irreducible polynomials over $\mathbb{F}_p$ with each $\phi_i(x)$ belonging to $\mathbb{Z}[x]$ monic and not dividing $f(x)$. For each $i$, suppose that the $\phi_i$-Newton polygon of $g(x)$ have $t_i$ edges $S_{ij}$ with slopes $\lambda_{ij} = l_{ij}/e_{ij}$, where $\gcd(l_{ij}, e_{ij}) = 1$. If $f(x)$ is $p$-regular and $T_{ij}(Y) = \prod_{s=1}^{s_{ij}} U_{ijs}(Y)$ is the factorisation of the residual polynomial $T_{ij}(Y)$ into distinct irreducible factors over $\mathbb{F}_p$ with respect to $(\phi_i, S_{ij})$ for $1 \leq j \leq t_i$, then  
	\[
	p \mathbb{Z}_K = \prod_{i=1}^{r} \prod_{j=1}^{t_i} \prod_{s=1}^{s_{ij}} \mathfrak{p}_{ijs}^{e_{ij}},
	\]
	where $\mathfrak{p}_{ijs}$ are distinct prime ideals of $\mathbb{Z}_K$ having residual degree $\deg \phi_i(x) \cdot \deg U_{ijs}(Y)$.  
\end{theorem}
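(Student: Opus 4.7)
My plan is to combine the classical correspondence between primes of $\mathbb{Z}_K$ lying above $p$ and monic irreducible factors of $f(x)$ in $\mathbb{Z}_p[x]$ with a three-level factorization of $f(x)$ over $\mathbb{Z}_p[x]$: a coarse Hensel-type split according to $\overline{f} = \prod_i \overline{\phi_i}^{e_i}$, a refinement by the edges of each $\phi_i$-Newton polygon, and a final split indexed by the irreducible factors of the residual polynomials $T_{ij}(Y)$. Throughout, the $p$-regularity hypothesis will ensure that the last split yields genuinely irreducible pieces, each contributing exactly one prime ideal of $\mathbb{Z}_K$ whose invariants can be read off directly from the Newton-polygon data.

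First I would invoke Hensel's lemma, exactly as recalled in the statement preceding Definition 2.5, to write $f(x) = f_1(x)\cdots f_r(x)$ in $\mathbb{Z}_p[x]$ with $\overline{f_i}(x) = \overline{\phi_i}(x)^{e_i}$. The next step is to apply Theorem \ref{thm1.11-NP} to each $f_i(x)$: its $\phi_i$-Newton polygon has $t_i$ edges $S_{ij}$ with slopes $\lambda_{ij}$, and the theorem produces a further factorization $f_i(x) = \prod_{j=1}^{t_i} f_{ij}(x)$ in $\mathbb{Z}_p[x]$, where each $f_{ij}(x)$ is monic of degree $\ell_{ij}\deg \phi_i$ (with $\ell_{ij}$ the horizontal length of $S_{ij}$) and has a single-edge $\phi_i$-Newton polygon which is a translate of $S_{ij}$. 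This uses only material already in the excerpt.

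The heart of the argument is the third level: splitting each $f_{ij}(x)$ by means of its residual polynomial. Given the factorization $T_{ij}(Y) = \prod_{s=1}^{s_{ij}} U_{ijs}(Y)$ into distinct irreducible factors over the residue field $\mathbb{F}_p[\alpha_i]$, one obtains a further factorization $f_{ij}(x) = \prod_{s=1}^{s_{ij}} f_{ijs}(x)$ in $\mathbb{Z}_p[x]$, in which each $f_{ijs}$ is irreducible of degree $e_{ij}\,\deg \phi_i\, \deg U_{ijs}$, where $e_{ij}$ is the denominator of the slope $\lambda_{ij}=l_{ij}/e_{ij}$. The distinctness of the $U_{ijs}$, guaranteed by $p$-regularity, is precisely what prevents any further splitting and produces a bijection between the $U_{ijs}$ and the irreducible factors of $f_{ij}$ over $\mathbb{Q}_p$. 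I would establish this step by choosing a Hensel-type lift $U_{ijs}(Y)\in\mathbb{Z}_p[\alpha_i][Y]$, substituting a normalised variable that absorbs the slope $l_{ij}/e_{ij}$ (so that the edge becomes horizontal), and applying a Krasner/Hensel argument in the resulting unramified extension; alternatively I could invoke the higher-order Montes machinery of \cite{G2, KK12}.

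Finally I would extract the invariants. Each $f_{ijs}(x)$ generates a local extension $L_{ijs}=\mathbb{Q}_p[x]/(f_{ijs})$ whose maximal unramified subextension has residue field $\mathbb{F}_p[\alpha_i][Y]/(U_{ijs})$ of degree $\deg\phi_i\cdot\deg U_{ijs}$, over which $L_{ijs}$ is totally ramified of degree $e_{ij}$ with uniformizer whose valuation is read off the slope. Hence the associated prime $\mathfrak{p}_{ijs}$ of $\mathbb{Z}_K$ satisfies $e(\mathfrak{p}_{ijs}/p)=e_{ij}$ and $f(\mathfrak{p}_{ijs}/p)=\deg\phi_i\cdot\deg U_{ijs}$, and a degree count $\sum_{i,j,s}e_{ij}\,\deg\phi_i\,\deg U_{ijs}=\deg f$ confirms that no prime is missed. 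The main obstacle is the splitting in the third paragraph: unlike the first two levels, it cannot be reduced to a single naive Hensel lift over $\mathbb{Z}_p[x]$, since the relevant factorization happens only after passing to the unramified extension defined by $\phi_i$ and normalising by the slope, which is exactly the point where the $p$-regularity hypothesis becomes indispensable.
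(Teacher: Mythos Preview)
The paper does not actually prove this theorem at all: it is stated without proof as ``a weaker version of \cite[Theorem 1.2]{KK12}'' and is used purely as a black box to deduce the prime factorizations in Sections~\ref{sec6.2-1} and~\ref{sec6.2}. Consequently there is no proof in the paper to compare your proposal against.

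That said, your outline is a reasonable sketch of the strategy one finds in the Ore--Montes--Nart--Khanduja circle of ideas (Hensel split by the $\overline{\phi_i}$, then Ore's ``theorem of the product'' along each $\phi_i$-Newton polygon, then a final Hensel-type lift indexed by the irreducible factors of the residual polynomials). The delicate step you flag---lifting the factorization of $T_{ij}(Y)$ to a factorization of $f_{ij}(x)$ over $\mathbb{Z}_p$---is indeed where the real work lies, and in the literature it is handled either via Ore's original arguments or via the Montes machinery you cite; it is not something you can get from the material contained in the present paper alone. If you were asked to supply a self-contained proof here, you would need to fill in that third-level splitting carefully (e.g.\ by passing to the unramified extension $\mathbb{Q}_p(\alpha_i)$, changing variables to flatten the slope, and applying Hensel to the residual factorization), rather than merely gesturing at Krasner/Hensel.
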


For a given prime $p$, we highlight here that using Theorems \ref{main thm1} and \ref{main thm2}, one can easily construct a tower of number fields $K_n$ generated by roots of irreducible polynomials $f^n(x)$ for $n \in \mathbb{N}$. Further, it will also give the factorization of $p\Z_{K_n}$ over $K_n$. 

For a prime $q\neq p$. Consider the polynomial $f(x) = x^d + q^{d-1} a x^m + q^{d-1} b \in \Q[x]$ with $\nu_q(a)= \nu_q(b)= 0 $. Clearly, $f(x)$ is $q^{d-1}$-Dumas polynomial. Applying Corollary \ref{cor6.2}, we deduce that all the iterates $f^n$ of $f$ are $q^{d-1}$-Dumas and hence, irreducible.

Select $a,b,m$ such that $\gcd(d-m, \nu_p(a))= \gcd(m, \nu_p(b) - \nu_p(a))=1$. Suppose that the vertices of $NP_p(f)$ are given by the set $S=\{ (0,0), (d-m, \nu_p(a)), (d, \nu_p(b)) \}$. Define the slopes:
\[ \lambda_{1} = \frac{\nu_p(a)}{d-m}, \quad \lambda_{2} = \frac{\nu_p(b) - \nu_p(a)}{m}.\]
Refine the choices of $a,b$ to satisfy $p \mid a$, $p \mid b$ and $\nu_p(b) \leq \lambda_{1} (2d-1)$. Applying Theorem \ref{main thm1}, we observe that for all $n \in \N$, the structure of Newton polygon of $f(x)$ with respect to $p$ is given by
\[NP_p(f) : (0,0) \longrightarrow (d^{n-1}(d-m), \nu_p(a)) \longrightarrow (d^n, \nu_p(b)). \]
Assumption $\gcd(d-m, \nu_p(a))= \gcd(m, \nu_p(b) - \nu_p(a))=1$ ensures that the residual polynomials corresponding to each edge are linear. For a root $\theta_n$ of $f^n$, let $K_n$ be the field generated by $\theta_n$ over $\Q$. Applying Theorem \ref{prime factorization}, we get
$$p\Z_{K_n} = \mathfrak{p_1}^{d-m} \mathfrak{p_2}^m $$
where $\mathfrak{p_1}$ and $\mathfrak{p_2}$ are distinct prime ideals of $\Z_{K_n}$ having residual degree $1$. If $d \geq 3$, then for all $n \geq 1$, $p$ ramifies in $K_n$.

\subsection{Non-monogenity of number fields:} \label{sec6.2}

Let $K = \mathbb{Q}(\theta)$ be an algebraic number field with $\theta$ in the ring $\mathbb{Z}_K$ of algebraic integers of $K$. Let $f(x)$ be the minimal polynomial of $\theta$ having degree $n$ over the field $\mathbb{Q}$ of rational numbers. A number field $K$ is said to be \emph{monogenic} if there exists some $\alpha \in \mathbb{Z}_K$ such that $\mathbb{Z}_K = \mathbb{Z}[\alpha]$. In this case, $\{1, \alpha, \ldots, \alpha^{n-1}\}$ is an integral basis of $K$; such an integral basis of $K$ is called a \emph{power integral basis}, or briefly, a \emph{power basis} of $K$. If $K$ does not admit any power basis, it is said to be \emph{non-monogenic}.

Let $\operatorname{ind} \theta$ denote the index of the subgroup $\mathbb{Z}[\theta]$ in $\mathbb{Z}_K$, and let $i(K)$ represent the \emph{index of the field} $K$, defined as 
\[
i(K) = \gcd\{\operatorname{ind} \alpha \mid K = \mathbb{Q}(\alpha) \text{ and } \alpha \in \mathbb{Z}_K\}.
\]
A prime number $p$ dividing $i(K)$ is called a \emph{prime common index divisor} of $K$. Note that if $K$ is monogenic, then $i(K) = 1$. Consequently, a number field possessing a prime common index divisor cannot be monogenic.

For a rational prime $p$, let $\mathbb{F}_p$ denote the finite field with $p$ elements, and let $\mathbb{Z}_p$ denote the ring of $p$-adic integers. The following result from \cite[Theorem 4.34]{Nar04} will be utilized to prove the nonmonogenity of number field $K$.

\begin{lemma} \label{4.1}
	Let $K$ be an algebraic number field and $p$ be a rational prime. Let $P_h$ denote the number of distinct prime ideals of $\mathbb{Z}_K$ lying above $p$ having residual degree $h$, and let $N_h$ denote the number of irreducible polynomials of degree $h$ in $\mathbb{F}_p[x]$. Then $p$ is a prime common index divisor of $K$ if and only if $P_h > N_h$ for some $h$.
\end{lemma}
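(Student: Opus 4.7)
The plan is to establish this classical Hensel--Dedekind type criterion by combining Dedekind's factorization theorem with a Chinese Remainder Theorem construction of primitive elements. First, I would recall Dedekind's theorem in the form: whenever $\alpha \in \mathbb{Z}_K$ generates $K$ over $\mathbb{Q}$ and $p \nmid \operatorname{ind} \alpha$, the reduction modulo $p$ of the minimal polynomial $f_{\alpha}(x) \in \mathbb{Z}[x]$ factors as $\overline{f_{\alpha}}(x) = \prod_{i=1}^{r} \overline{\phi_i}(x)^{e_i}$ with the $\overline{\phi_i}$ distinct monic irreducibles over $\mathbb{F}_p$, and this mirrors the prime decomposition $p\mathbb{Z}_K = \prod_{i=1}^{r} \mathfrak{p}_i^{e_i}$ with the residual degree of $\mathfrak{p}_i$ equal to $\deg \phi_i$. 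This is the standard bridge between factorizations in $\mathbb{F}_p[x]$ and in $\mathbb{Z}_K$, and is the only external ingredient I would cite.

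For the easy direction, assume $P_h > N_h$ for some $h$, and suppose for contradiction that some generator $\alpha$ satisfies $p \nmid \operatorname{ind} \alpha$. Dedekind's theorem then produces $P_h$ pairwise distinct monic irreducible factors of $\overline{f_\alpha}$ of degree $h$ in $\mathbb{F}_p[x]$, exceeding the total count $N_h$ of such irreducibles~---~a contradiction. Hence $p \mid \operatorname{ind} \alpha$ for every primitive element $\alpha \in \mathbb{Z}_K$, so $p \mid i(K)$.

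For the reverse direction, suppose $P_h \leq N_h$ for every $h$, and construct a generator $\alpha$ with $p \nmid \operatorname{ind} \alpha$. List the primes above $p$ as $\mathfrak{p}_1, \ldots, \mathfrak{p}_r$ with residual degrees $f_1, \ldots, f_r$ and ramification indices $e_1, \ldots, e_r$. Using $P_h \leq N_h$ for each $h$, pick pairwise distinct monic irreducibles $\phi_i \in \mathbb{F}_p[x]$ with $\deg \phi_i = f_i$; this is the only place the hypothesis is used. Since $\mathbb{Z}_K/\mathfrak{p}_i \cong \mathbb{F}_{p^{f_i}}$ is the splitting field of $\phi_i$, choose $\overline{\beta_i} \in \mathbb{Z}_K/\mathfrak{p}_i$ with $\phi_i(\overline{\beta_i}) = 0$ and such that $\overline{\beta_i}$ generates $\mathbb{F}_{p^{f_i}}$ over $\mathbb{F}_p$. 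The CRT isomorphism $\mathbb{Z}_K/p\mathbb{Z}_K \cong \prod_i \mathbb{Z}_K/\mathfrak{p}_i^{e_i}$, combined with arbitrary lifts of $\overline{\beta_i}$ to $\mathbb{Z}_K/\mathfrak{p}_i^{e_i}$, produces an element $\alpha \in \mathbb{Z}_K$ whose reduction modulo each $\mathfrak{p}_i$ is $\overline{\beta_i}$. A local analysis in the completions $(\mathbb{Z}_K)_{\mathfrak{p}_i}$, together with Hensel's lemma applied to $\phi_i$, then shows that the minimal polynomial of $\alpha$ reduces modulo $p$ to $\prod_i \phi_i^{e_i}$, after which Dedekind's criterion gives $p \nmid \operatorname{ind} \alpha$.

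The main obstacle is ensuring that the assembled $\alpha$ (i) genuinely generates $K$ over $\mathbb{Q}$ rather than a proper subfield, and (ii) has minimal polynomial reducing modulo $p$ exactly to $\prod_i \phi_i^{e_i}$ with the claimed multiplicities $e_i$. Primitivity is handled by a standard weak approximation / primitive-element argument: the locus of non-primitive elements is contained in a finite union of proper $\mathbb{Q}$-subspaces of $K$, and one can add a small multiple of $p$ times a generator of $K$ to $\alpha$ without disturbing its residues modulo the $\mathfrak{p}_i$ but moving it off this bad locus. The factorization statement for the minimal polynomial is controlled by the canonical decomposition $\mathbb{Z}_K \otimes_{\mathbb{Z}} \mathbb{Z}_p \cong \prod_i (\mathbb{Z}_K)_{\mathfrak{p}_i}$, under which the minimal polynomial of $\alpha$ over $\mathbb{Z}_p$ factors into $r$ local pieces, each of degree $e_i f_i$ and reducing to $\phi_i^{e_i}$ modulo $p$ by the choice of $\overline{\beta_i}$ and a Krasner/Hensel argument.
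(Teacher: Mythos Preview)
The paper does not prove this lemma at all; it simply quotes it as \cite[Theorem 4.34]{Nar04}, so there is no in-paper argument to compare against. Your outline follows the classical Hensel--Dedekind route, and the forward direction (if $P_h>N_h$ for some $h$ then $p\mid i(K)$) is correct as written.

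There is, however, a genuine gap in the converse. When some $e_i>1$, taking an \emph{arbitrary} lift of $\overline{\beta_i}$ to $\mathbb{Z}_K/\mathfrak{p}_i^{e_i}$ need not yield a generator of that local ring over $\mathbb{F}_p$. Concretely, if $\mathfrak{p}_i$ is totally ramified with residue field $\mathbb{F}_p$, then $\overline{\beta_i}\in\mathbb{F}_p$ and the lift $\beta_i=\overline{\beta_i}\in\mathbb{Z}\subset\mathbb{Z}_K/\mathfrak{p}_i^{e_i}$ is a perfectly valid ``arbitrary lift'' that generates only the copy of $\mathbb{F}_p$, not the $e_i$-dimensional ring $\mathbb{Z}_K/\mathfrak{p}_i^{e_i}$. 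The condition you actually need is that the lift $\beta_i$ satisfies $\phi_i(\beta_i)\in\mathfrak{p}_i\setminus\mathfrak{p}_i^2$, i.e.\ $\phi_i(\beta_i)$ is a local uniformizer; only then does $\mathbb{F}_p[\beta_i]=\mathbb{Z}_K/\mathfrak{p}_i^{e_i}$. This is easy to arrange (add a uniformizer to any lift), but it must be said.

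A related issue is your final appeal to ``Dedekind's criterion'': merely knowing that $\overline{f_\alpha}=\prod_i\phi_i^{e_i}$ with the correct degrees does \emph{not} force $p\nmid\operatorname{ind}\alpha$ (take $\alpha=p\pi$ in $\mathbb{Q}_p(\pi)$ with $\pi^2=p$; the minimal polynomial $x^2-p^3$ reduces to $x^2$ yet $p\mid\operatorname{ind}\alpha$). The clean way to finish is to argue directly: once each $\beta_i$ generates $\mathbb{Z}_K/\mathfrak{p}_i^{e_i}$ over $\mathbb{F}_p$ and the $\phi_i$ are distinct, CRT gives that $\alpha$ generates $\mathbb{Z}_K/p\mathbb{Z}_K$ over $\mathbb{F}_p$, and Nakayama then yields $\mathbb{Z}_p[\alpha]=\mathbb{Z}_K\otimes_{\mathbb{Z}}\mathbb{Z}_p$, i.e.\ $p\nmid\operatorname{ind}\alpha$.
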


Recently, using the aforementioned result, many authors have constructed classes of non-monogenic number fields generated by roots of binomials, trinomials, quadrinomials, and other types of polynomials. For further details, we refer the reader to the recent survey article \cite{Gaal}.

We highlight here that using Theorems \ref{main thm1} and \ref{main thm2} with Lemma \ref{4.1}, one can easily deduce a tower of non-monogenic number fields $K_n$ generated by roots of irreducible polynomials $f^n(x)$ for $n \in \mathbb{N}$. For example:

Let $f(x) = x^d + 3^{d-1} a x^m + 3^{d-1} b x^l + 3^{d-1} c \in \mathbb{Q}[x]$ with $\nu_3(a) = \nu_3(b) = \nu_3(c) = 0$. Suppose that the vertices of the Newton polygon $NP_2(f)$ are given by the set 
\[
S = \{(0, 0), (d - m, \nu_2(a)), (d - l, \nu_2(b)), (d, \nu_2(c))\}.
\]
Assuming $\gcd(d - m, \nu_2(a))= \gcd(m - l, \nu_2(b) - \nu_2(a))= \gcd(l, \nu_2(c) - \nu_2(b))=1$, define the slopes:
\[
\lambda_1 = \frac{\nu_2(a)}{d - m}, \quad 
\lambda_2 = \frac{\nu_2(b) - \nu_2(a)}{m - l}, \quad 
\lambda_3 = \frac{\nu_2(c) - \nu_2(b)}{l}.
\]

Clearly, $f(x)$ is a $3^{d-1}$-Dumas polynomial. By Corollary \ref{cor6.2}, all iterates $f^n$ of $f$ are $3^{d-1}$-Dumas and hence irreducible. Further, choose $a, b, c \in \mathbb{Q}$ such that $\nu_2(c) \leq \lambda_1(2d - 1)$.

Under this assumption, Theorem \ref{main thm1} implies that for all $n \in \mathbb{N}$, the Newton polygon of $f^n$ with respect to 2 is given by:
\[
NP_2(f^n): (0, 0) \to (d^{n-1}(d - m), \nu_2(a)) \to (d^{n-1}(d - l), \nu_2(b)) \to (d^n, \nu_2(c)).
\]
The constraints on the $2$-adic valuations of $a, b, c \in \mathbb{Q}$ ensures that the residual polynomials (\textit{cf.} Definition \ref{residual polynomial}) 
corresponding to each edge are linear. If $\theta_n$ is a root of $f^n$ and $K_n = \mathbb{Q}(\theta_n)$, then by Theorem \ref{prime factorization}, we see that the number of distinct prime ideals of $\Z_{K_n}$ lying above $2$ having residual degree 1 are $3$. However, we know that the number of distinct irreducible linear polynomials in $\mathbb{F}_2[x]$ are $2$. Therefore, by Lemma \ref{4.1}, $2 \mid i(K_n)$ for all $n \in \mathbb{N}$. This establishes that for all $n \in \mathbb{N}$, $f^n$ is non-monogenic.

There exist infinitely many such families of polynomials. One particular family is 
\[
f(x) = x^4 + 2 \cdot 3^3 a x^3 + 16 \cdot 3^3 b x + 128 \cdot 3^3 c,
\]
where $a, b, c \in \mathbb{Q}$ satisfy $\nu_q(a) = \nu_q(b) = \nu_q(c) = 0$ for $q = 2, 3$. Clearly, $f(x)$ is a $3^3$-Dumas polynomial. Further, all the iterates $f^n$ generates tower of non-monogenic number fields $K_n$.

\subsection{Number of Irreducible Factors, Eventual Stability, and Degree of Factors:} \label{sec6.3}

Let $g(x) \in \mathbb{Q}[x]$ be a polynomial of degree $e$, and let $p$ be a prime such that the Newton polygon of $g$ with respect to $p$ is given by:
\[
NP_p(g): (0, 0) \rightarrow (e - m_1, r_1) \rightarrow \cdots \rightarrow (e - m_s, r_s) \rightarrow \cdots \rightarrow (e, r_t).
\]
Here, we define $m_0 = e$, $m_t = r_0 = 0$, and let $0 < \lambda_1 < \lambda_2 < \cdots < \lambda_t$ be the slopes of the edges of $NP_p(g)$. If $r_t \leq \lambda_1(2e - 1)$, then Theorem \ref{main thm1} implies that, for any $n \in \mathbb{N}$, the Newton polygon of the $n$th iterate of $g$, denoted by $g^n$, is given by:
\[
NP_p(g^n): (0, 0) \rightarrow (e^{n-1}(e - m_1), r_1) \rightarrow \cdots \rightarrow (e^{n-1}(e - m_s), r_s) \rightarrow \cdots \rightarrow (e^n, r_t).
\]

In this case, the number of irreducible factors of $g^n$ is bounded by $r_t$, ensuring that $g^n$ is \emph{eventually stable}. Furthermore, if we assume 
\[
\gcd(e(m_{s+1} - m_s), r_{s+1} - r_s) = 1 \quad \text{for all } 0 \leq s < t,
\]
then the vertices of the Newton polygon are the only lattice points on $NP_p(g)$. Applying Dumas' Theorem, it follows that $g^n$ remains eventually stable, with the number of irreducible factors of $g^n$ bounded by $t$. 

Additionally, Dumas' Theorem implies that the degree of any irreducible factor of $g^n$ belongs to the set:
\[
\left\{ \sum_{j=1}^{k} (m_{i_j+1} - m_{i_j}) : 1 \leq k \leq n \text{ and } 1 \leq i_1, i_2, \ldots, i_k \leq n \right\}.
\]

{}

\subsection{Conjecture of Sookdeo:} \label{sec6.9}

Let $K$ be a number field and $f(x) \in K(x)$ be a non-constant rational function. The \textit{forward orbit} of $\alpha \in \mathbb{P}^1(K)$ under $f$ is defined as 
\[
O_f^+(\alpha) = \{ \alpha, f(\alpha), f^2(\alpha), \ldots \},
\]
and the \textit{backward orbit} of $\alpha$, denoted by $O_f^-(\alpha)$, is defined as 
\[
O_f^-(\alpha) = \bigcup_{n \geq 0} f^{-n}(\alpha),
\]
where 
\[
f^{-n}(\alpha) := \{ \beta \in \mathbb{P}^1(\overline{K}) : f^n(\beta) = \alpha \}.
\]
A point $\alpha$ is said to be \textit{preperiodic} for $f$ if its forward orbit $O_f^+(\alpha)$ is finite. Similarly, $\alpha$ is \textit{exceptional} for $f$ if its backward orbit $O_f^-(\alpha)$ is finite. 

Consider a point $\alpha \in \mathbb{P}^1(K)$. For each $n \geq 1$, choose coprime $a_n, b_n \in K[x]$ with $f^n(z) = a_n(x) / b_n(x)$. If $\alpha \neq \infty$, we say that the pair $(f, \alpha)$ is \textit{eventually stable}, if the number of irreducible factors of $a_n(x) - \alpha b_n(x)$ in $K[x]$ is bounded by a constant independent of $n$. We say that $(f, \infty)$ is \textit{eventually stable}, if the number of irreducible factors of $b_n(z)$ is similarly bounded.

Let $S$ be a finite set of places of $K$ containing all Archimedean places. A point $\beta \in \mathbb{P}^1(\overline{K})$ is called \textit{$S$-integral with respect to $\gamma \in \mathbb{P}^1(K)$} if there is no prime $\mathfrak{p}$ of $K(\beta)$ lying over a prime outside of $S$ such that the images of $\beta$ and $\gamma$ modulo $\mathfrak{p}$ coincide. Define the set
\[
\mathcal{O}_{S, \gamma} := \{ \beta \in \mathbb{P}^1(\overline{K}) : \beta \text{ is $S$-integral with respect to } \gamma \}.
\]
If $S$ consists only of the Archimedean places of $K$, then $\mathcal{O}_{S, \infty}$ is the ring of algebraic integers in $\overline{K}$. 

With the above definitions, Silverman \cite{Sil93} in 1993 proved that: For a rational function $f(x) \in \mathbb{Q}(x)$ of degree at least $2$ and $\alpha \in \mathbb{P}^1(K)$, if $\infty$ is not exceptional for $f$, then the forward orbit $O_f^+(\alpha)$ contains only finitely many points in $\mathbb{P}^1(K)$ that are $S$-integral relative to $\infty$. 

In 2011, Sookdeo \cite[Theorem 2.5 and 2.6]{sook11} proved the following analogue of Silverman's result for backward orbits $\mathcal{O}_{f}^-(\a)$.

\begin{theorem} \label{t6.4}
Let $K$ be a number field, $S$ a finite set of places of $K$ containing all Archimedean places, $\alpha \in \mathbb{P}^1(K)$, and $f(x) \in K(x)$ be a rational function of degree $d \geq 2$. If $(f, \alpha)$ is eventually stable, then
\[
\mathcal{O}_{S, \gamma} \cap O_f^-(\alpha) \text{ is finite for all } \gamma \in \mathbb{P}^1(K) \text{ not preperiodic under } f.
\]
\end{theorem}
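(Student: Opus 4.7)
The plan is to argue by contradiction, using eventual stability to organize the backward orbit into finitely many infinite chains, and then applying canonical-height and local-height machinery to derive a contradiction.

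First, I would suppose $\mathcal{O}_{S,\gamma} \cap O_f^-(\alpha)$ is infinite. Writing $f^n = a_n/b_n$ in lowest terms with $a_n, b_n \in K[x]$, the eventual stability of $(f,\alpha)$ yields a uniform bound $N$ on the number of irreducible factors of $F_n(x) := a_n(x) - \alpha \, b_n(x)$ over $K$. Consequently, the preimage tree $\bigcup_{n \geq 0} f^{-n}(\alpha)$ decomposes, beyond a fixed level $n_0$, into at most $N$ Galois-stable infinite branches, each inherited from a single irreducible factor of $F_{n_0}$. By pigeonhole, one such branch $\mathcal{B}$ contains infinitely many points of $\mathcal{O}_{S,\gamma}$. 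After passing to a finite extension $K'/K$ (so that $\mathcal{B}$ is realized over $K'$) and enlarging $S$ to a finite set $S'$ containing also the primes of bad reduction of $f$, one can extract a chain $\beta_0 = \alpha, \beta_1, \beta_2, \ldots$ with $f(\beta_{n+1}) = \beta_n$, such that (after passing to a subsequence) each $\beta_n$ is $S'$-integral with respect to $\gamma$.

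Second, I would bring in the Call--Silverman canonical height $\hat h_f$ on $\mathbb{P}^1(\overline{K})$, which satisfies $\hat h_f \circ f = d \cdot \hat h_f$ and differs from a Weil height by a bounded function. Along the chain, $\hat h_f(\beta_n) = d^{-n}\hat h_f(\alpha) \to 0$, while $\hat h_f(\gamma) > 0$ since $\gamma$ is not preperiodic (equivalently, by Northcott, has nonzero canonical height). Decomposing $\hat h_f$ into local components $\hat\lambda_{f,v}$ indexed by places of $K'(\beta_n)$, the integrality hypothesis $\beta_n \in \mathcal{O}_{S',\gamma}$ forces $\hat\lambda_{f,v}(\beta_n,\gamma) = O(1)$ uniformly in $n$ for every place $v$ lying above a prime not in $S'$. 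Combining this with the global identity $\sum_v \hat\lambda_{f,v}(P,\gamma) = \hat h_f(P) + \hat h_f(\gamma) + O_f(1)$, one obtains a lower bound on the aggregate local-height mass at places of $S'$ that does not decay with $n$.

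Third, the contradiction comes from an equidistribution argument: the Baker--Rumely--Chambert-Loir theorem on equidistribution of small points says that, because $\hat h_f(\beta_n) \to 0$, the Galois conjugates of $\beta_n$ at any place $v$ equidistribute with respect to the canonical measure $\mu_{f,v}$ on the Berkovich projective line, which places no mass at the single point $\gamma$ (using again that $\gamma$ is not preperiodic, so not in the exceptional locus). This forces $\hat\lambda_{f,v}(\beta_n,\gamma) \to 0$ at every $v \in S'$, contradicting the lower bound established in the previous step.

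The main obstacle is making the passage from the combinatorial $S$-integrality condition to a quantitative local-height estimate uniform in the growing degree $[K'(\beta_n):K']$. In particular, one must absorb all primes of bad reduction of $f$ into $S'$ at the outset so that reduction modulo primes outside $S'$ commutes with iteration, and one must verify that the equidistribution input genuinely rules out local concentration at $\gamma$ simultaneously at every place in the finite set $S'$. An alternative, more elementary route would try to reduce to Silverman's theorem on $S$-integrality in forward orbits by exploiting the good-reduction duality $\alpha \equiv f^n(\gamma) \pmod{\mathfrak{p}}$ iff $\gamma \equiv \beta \pmod{\mathfrak{P}}$ for some $\beta \in f^{-n}(\alpha)$; but this route requires additional care because the duality replaces the condition on a \emph{single} $\beta$ by a condition on the whole fiber, and recovering it consumes the eventual stability hypothesis in a nontrivial way.
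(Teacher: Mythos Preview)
The paper does not prove Theorem~\ref{t6.4}. It is quoted verbatim as a result of Sookdeo \cite[Theorems 2.5 and 2.6]{sook11} and is used only as a black box in Section~\ref{sec6.9} to exhibit families of polynomials satisfying Sookdeo's conjecture. There is therefore no proof in the paper to compare your attempt against.

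That said, your sketch is broadly in the spirit of how such results are actually proved in the arithmetic-dynamics literature (canonical heights plus equidistribution of small points), and the role you assign to eventual stability---reducing the backward orbit to finitely many Galois-stable towers so that pigeonhole produces a single tower carrying infinitely many $S$-integral points---is exactly the reason the hypothesis enters. A few points would need tightening in a real proof: the ``chain'' extraction is not quite right as stated (what survives pigeonhole is a sequence of full Galois orbits at infinitely many levels, not a coherent chain of individual points; fortunately $S$-integrality relative to $\gamma\in K$ is Galois-stable, so this is harmless), the global identity you write for the local canonical heights is not standard in that form, and the equidistribution step yields convergence of Galois-orbit \emph{averages} of $\hat\lambda_{f,v}(\cdot,\gamma)$ to an integral against $\mu_{f,v}$ rather than pointwise decay to zero. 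The ``more elementary route'' you mention at the end---pulling $S$-integrality forward through $f^n$ at primes of good reduction and invoking Silverman's theorem on forward orbits---is closer to what Sookdeo himself does, and you correctly identify that eventual stability is what makes the bookkeeping over the fiber manageable.
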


From Theorem \ref{t6.4}, we observe that \emph{eventual stability} plays a crucial role in proving the Conjecture of Sookdeo. Using Theorems \ref{main thm1} and \ref{main thm2}, one can easily generate infinitely many families of polynomials in $\mathbb{Q}[x]$ satisfying the conjecture. One such family is described below.

For an integer $n \geq 3$, consider the polynomial 
\[
g(x) = x^{3n} + 2a x^{2n-2} + 4b x^{n-2} + 8c,
\]
where $a$ and $b$ are positive rationals with $\nu_2(a) = \nu_2(b) = 0$, and $c$ is an odd positive integer. A simple application of Theorem \ref{main thm1} shows that $g^n(x)$ is eventually stable. Moreover, it can be easily verified that
\[
8 \leq g(0) < g^2(0) < \cdots < g^n(0) < \cdots,
\]
which implies that $0$ is not preperiodic under $g$. Applying Theorem \ref{t6.4}, it follows that Sookdeo's conjecture \cite[Conjecture 1.2]{sook11} holds for the polynomial $g(x)$.

\section*{Declarations}

\noindent \textbf{Data Availability:} Data sharing is not applicable to this article as no datasets were generated or analysed during the current study.\\

\noindent \textbf{Conflict of interest:} On behalf of all authors, the corresponding author states that there is no conflict of interest.

\end{document}